\newcommand{\seq}[1]{\left<#1\right>}
\newcommand{\norm}[1]{\left\Vert#1\right\Vert}
\newcommand{\abs}[1]{\left\vert#1\right\vert}
\newcommand{\bra}[1]{\left(#1\right)}
\newcommand{\R}{\mathbb R}
\newcommand{\N}{\mathbb N}
\renewcommand{\to}{\longrightarrow}
\newcommand{\h}{\mathfrak{H}}
\newcommand{\bh}{\mathfrak{B}(\mathfrak{H})}
  \renewcommand{\S}{\mathfrak{S}}
  \newcommand{\T}{\mathfrak{T}}
 \newcommand{\A}{\mathfrak{A}}
 \newcommand{\B}{\mathfrak{B}}
 \newcommand{\C}{\mathfrak{C}}
 \newcommand{\D}{\mathfrak{D}}
 \newcommand{\X}{\mathfrak{X}}
\newtheorem{theorem}{Theorem}[section]
\newtheorem{lemma}[theorem]{Lemma}
\newtheorem{corollary}[theorem]{Corollary}
\newtheorem{example}[theorem]{Example}
\newtheorem{remark}[theorem]{Remark}
\newcommand\mystyle{\everymath{\displaystyle}}
\title{An estimate for the numerical radius
of the Hilbert space operators and a numerical radius inequality}
\author{ \href{https://orcid.org/0000-0000-0000-0000}{\includegraphics[scale=0.06]{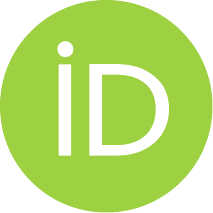}\hspace{1mm}M.H.M. Rashid}\thanks{Corresponding Author} \\
	Dept. of Maths \& Stat.\\
	Faculty of Science P.O.Box(7)\\
Mu'tah University-Al-karak-Jordan \\
	\texttt{mrash@mutah.edu.jo} \\
	\And
	\href{https://orcid.org/0000-0000-0000-0000}{\includegraphics[scale=0.06]{orcid.pdf}\hspace{1mm}Feras Bani-Ahmad} \\
	Dept. of Maths \\
 Faculty of Science, The Hashemite University P.O.Box 330127\\
Zarqa 13133, Jordan\\
	\texttt{fbaniahmad@hu.ed.jo} \\
}
\begin{document}
\maketitle

\begin{abstract}
	We provide a number of sharp inequalities involving the usual operator norms of Hilbert
 space operators and powers of the numerical radii. Based on the traditional convexity
 inequalities for nonnegative real numbers and some
generalize earlier numerical radius inequalities, operator.
Precisely, we prove that if  $\A_i,\B_i,\X_i\in\bh$ ($i=1,2,\cdots,n$), $m\in\N$, $p,q>1$ with $\frac{1}{p}+\frac{1}{q}=1$ and $\phi$ and $\psi$
 are non-negative functions on $[0,\infty)$ which are continuous such that $\phi(t)\psi(t)=t$ for all $t \in [0,\infty)$, then
\begin{equation*}
    w^{2r}\bra{\sum_{i=1}^{n}\X_i\A_i^m\B_i}\leq \frac{n^{2r-1}}{m}\sum_{j=1}^{m}\norm{\sum_{i=1}^{n}\frac{1}{p}S_{i,j}^{pr}+\frac{1}{q}T_{i,j}^{qr}}-r_0\inf_{\norm{x}=1}\rho(\xi),
  \end{equation*}
  where $r_0=\min\{\frac{1}{p},\frac{1}{q}\}$, $S_{i,j}=\X_i\phi^2\bra{\abs{\A_i^{j*}}}\X_i^*$, $T_{i,j}=\bra{\A_i^{m-j}\B_i}^*\psi^2\bra{\abs{\A_i^j}}\A_i^{m-j}\B_i$ and
  $$\rho(x)=\frac{n^{2r-1}}{m}\sum_{j=1}^{m}\sum_{i=1}^{n}\bra{\seq{S_{i,j}^r\xi,\xi}^{\frac{p}{2}}-\seq{T_{i,j}^r\xi,\xi}^{\frac{q}{2}}}^2.$$
\end{abstract}

\keywords{numerical radius\and convex function operator\and Mixed Schwarz inequality\and Furuta inequality\and Young inequality}

\section{Introduction}
Let $\h$ be complex Hilbert space and $\bh$ be the $C^*$-algebra   of all bounded linear operator on $\h$.
An operator $\T\in\bh$ is said to be {\it positive} if $\seq{\T\xi,\xi}\geq 0$ holds for all $\xi\in\h$. We write $\T\geq 0$ if $\T$ is positive. \\
\indent The numerical radius of $\T\in\bh$ is defined by
$$w(\T)=\sup\{|\lambda|:\lambda\in W(\T)\}=\sup\{|\seq{\T\xi,\xi}|:\xi\in\h,\norm{x}=1\}.$$
It is well known that $w(\cdot)$ defines a norm on $\bh$, which is equivalent to the usual operator norm $\norm{\cdot}$.
In fact, for any $\T\in \bh$,
\begin{equation}\label{N1}
  \frac{1}{2}\norm{\T}\leq w(\T)\leq \norm{\T}.
\end{equation}
 Also, if $\T\in \bh$ is normal, then $w(\T)=\norm{\T}$.\\
 An important inequality for $w(\T)$ is the power
inequality stating that $w(\T^n)\leq (w(\T))^n$ for every natural numbers $n$.\\
Several numerical radius inequalities improving the inequalities in (\ref{N1}) have been
recently given in \cite{Drag1, Drag2, kit1, Rashid, Rash1, Rash2, Rash3}. For instance, \cite{kit2, kit3} proved that for any $\A\in\bh$,
\begin{equation}\label{Muna6}
  w(\A)\leq \frac{1}{2}\norm{|\A|^2+|\A^*|}\leq \frac{1}{2}\left(\norm{\A}+\norm{\A^2}^{1/2}\right).
\end{equation}
where $|\A|=\sqrt{\A^*\A}$ is the absolute value of $\A$, and
\begin{equation}\label{Muna7}
 \frac{1}{4}\norm{\A^*\A+\A\A^*}\leq w^2(\A)\leq \frac{1}{2}\norm{\A^*\A+\A\A^*}.
\end{equation}
Also, in the same paper, it was shown that
\begin{equation}\label{Muna7.5}
  \norm{\A+\B}^2\leq \norm{|\A|^2+|\B|^2}+\norm{|\A^*|^2+|\B^*|^2}.
\end{equation}
\cite{KH} established the generalizations of inequality (\ref{Muna6}) and the second inequality (\ref{Muna7})
as follows:
\begin{equation}\label{Muna8}
  w^r(\A)\leq \frac{1}{2}\norm{|\A|^{2r\lambda}+|\A^*|^{2r(1-\lambda)}}
\end{equation}
and
\begin{equation}\label{Muna8}
  w^{2r}(\A)\leq \norm{\lambda|\A|^{2r}+(1-\lambda)|\A^*|^{2r}},
\end{equation}
where $0<\lambda<1$ and $r\geq 1$.\\
A general numerical radius inequality has been established by  \cite{kit3}, it has been proved that
if $\A,\B,\C,\D,\T,\S\in\bh$, then
\begin{equation}\label{Muna9}
  w(\A\T\B+\C\S\D)\leq \frac{1}{2}\norm{\A|\T^*|^{2(1-\alpha)}\A^*+\B^*|\T|^{2\alpha}\B+\C|\S^*|^{2(1-\alpha)}\C^*+\D^*|\S|^{2\alpha}\D}
\end{equation}
for all $\alpha\in(0,1)$.\\
\indent Although several open problems relating to numerical radius inequalities for bounded linear operators remain unsolved, work on establishing numerical radius inequalities for a number of bounded linear operators has begun (see, for example, \cite{GR} and \cite{Rashid, Rash1, Rash2, Rash3}).
If $\A,\B\in \bh$, then
$$
  w(\A\B)\leq 4w(\A)w(\B).
$$
In the case that $NM=MN$, we have
$$
  w(\A\B)\leq 2w(\A)w(\B).
$$
  Moreover, if $N$ and $M$ are normal, then
  $$w(\A\B)\leq w(\A)w(\B).$$
Recently,  ~\cite{Drag3} proved that if $N,M\in\bh$ and $r\geq 1$, then
\begin{equation}\label{watfa1}
  w^r(\B^*\A)\leq \frac{1}{2}\norm{|\A|^{2r}+|\B^*|^{2r}}.
\end{equation}
\cite{AA} discovered a fascinating numerical radius inequality, it has
been shown that if $\A,X,\B \in \bh$, then
\begin{equation}\label{shab1}
  w^{r}(\A^*X\B)\leq \frac{1}{2}\norm{(\A^*|X^*|^{2\nu}\A)^r+(\B^*|X|^{2(1-\nu)}\B)^r},\,\,r\geq 1, 0<\nu<1.
\end{equation}
Very recently,  \cite{Dolat}, showed that if
$\A_i,\B_i,\X_i\in\bh$ ($i=1,2,\cdots,n$), $m\in\N$ and $\phi$ and $\psi$
 are non-negative functions
on $[0,\infty)$ which are continuous such that $\phi(t)\psi(t)=t$ for all $t \in [0,\infty)$, then
\begin{equation}\label{watfa2}
  w^{r}\bra{\sum_{i=1}^{n}\X_i\A_i^m\B_i}\leq \frac{n^{2r-1}}{2m}\sum_{j=1}^{m}\norm{\sum_{i=1}^{n}(E_{i,j})^{r}+(W_{i,j})^{r}}
\end{equation}
for $r\geq 1$, where $E_{i,j}=\X_i \phi^2\bra{\abs{\A_i^{j*}}}\X_i^*$ and $W_{i,j}=(\A_i^{m-j}\B_i)^*\psi^2\bra{\abs{\A_i^j}}\A_i^{m-j}\B_i$.\\
\indent The goal of this study is to develop significant extensions of these inequalities based on the classic convexity inequalities for nonnegative real numbers and some operator inequalities. For the sum of two operators, usual operator norm inequalities and a related numerical radius inequality are also provided. In specifically, if $i = 1, 2, \cdots n \in \N$, $\A_i, \B_i$, and $\X_i$ are bounded linear operators, we estimate the numerical radius to  $\sum_{j=1}^{m} \X_i\A_i^m\B_i$ for some $m \in \N$.

\section{Inequalities for sums and products of operators}
In this part, we built a generic numerical radius inequality for Hilbert space operators that results in well-known new numerical radius inequalities as an example. A norm inequality operator sets off this section. In fact, we give $\norm{\B^*\A+\D\C^*}$ an additional upper bound.
However, the following lemma is crucial to the theorem's proof.
\begin{lemma}[\cite{Drag0}]
  Let $\xi,\zeta,\eta\in \h$. Then we have
  \begin{equation}\label{in-prod}
    \seq{\eta,\xi}^2+\seq{\eta,\zeta}^2 \leq \norm{\eta}^2\max\{\norm{\xi}^2,\norm{\zeta}^2\}+|\seq{\xi,\zeta}|.
  \end{equation}
\end{lemma}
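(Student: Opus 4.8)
\medskip\noindent\emph{Proof strategy.} I read $\seq{u,v}^2$ as $\abs{\seq{u,v}}^2$ (the only reading that renders the inequality meaningful over $\Complex$). The stated bound is the normalized ($\norm{\eta}=1$) instance of the homogeneous inequality $\abs{\seq{\eta,\xi}}^2+\abs{\seq{\eta,\zeta}}^2\le\norm{\eta}^2\bra{\max\{\norm{\xi}^2,\norm{\zeta}^2\}+\abs{\seq{\xi,\zeta}}}$, which I would establish; so it suffices to assume $\norm{\eta}=1$, the general case following by rescaling. With this normalization, set $a=\seq{\eta,\xi}$ and $b=\seq{\eta,\zeta}$; the case $a=b=0$ is trivial, so assume $\abs{a}^2+\abs{b}^2>0$.

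The key step is to test $\eta$ against the \emph{single} auxiliary vector $w:=a\xi+b\zeta\in\h$. Conjugate symmetry of the inner product gives $\seq{w,\eta}=a\,\overline{\seq{\eta,\xi}}+b\,\overline{\seq{\eta,\zeta}}=\abs{a}^2+\abs{b}^2$, so Cauchy--Schwarz together with $\norm{\eta}=1$ yields
\begin{equation*}
\bra{\abs{a}^2+\abs{b}^2}^2=\abs{\seq{w,\eta}}^2\le\norm{w}^2 .
\end{equation*}
Next I would expand $\norm{w}^2=\abs{a}^2\norm{\xi}^2+\abs{b}^2\norm{\zeta}^2+2\,\mathrm{Re}\bra{a\overline{b}\,\seq{\xi,\zeta}}$ and bound the three summands: the first two by $\max\{\norm{\xi}^2,\norm{\zeta}^2\}\bra{\abs{a}^2+\abs{b}^2}$, and the cross term by $2\abs{a}\abs{b}\,\abs{\seq{\xi,\zeta}}\le\bra{\abs{a}^2+\abs{b}^2}\abs{\seq{\xi,\zeta}}$ using $2\abs{a}\abs{b}\le\abs{a}^2+\abs{b}^2$. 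This gives $\bra{\abs{a}^2+\abs{b}^2}^2\le\bra{\abs{a}^2+\abs{b}^2}\bra{\max\{\norm{\xi}^2,\norm{\zeta}^2\}+\abs{\seq{\xi,\zeta}}}$, and dividing through by $\abs{a}^2+\abs{b}^2>0$ completes the argument.

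I do not expect a genuine obstacle here; the only points that need care are spotting the auxiliary vector $w$ (so that $\seq{w,\eta}$ collapses exactly to $\abs{a}^2+\abs{b}^2$) and tracking the complex phases when expanding $\norm{w}^2$. As a cross-check, one can instead use the variational identity $\abs{a}^2+\abs{b}^2=\sup\set{\abs{\lambda a+\mu b}^2:\abs{\lambda}^2+\abs{\mu}^2=1}$ together with $\abs{\lambda a+\mu b}^2=\abs{\seq{\eta,\overline{\lambda}\xi+\overline{\mu}\zeta}}^2\le\norm{\overline{\lambda}\xi+\overline{\mu}\zeta}^2$ and the same two elementary estimates, uniformly in $(\lambda,\mu)$; I would keep the first presentation, which avoids the supremum.
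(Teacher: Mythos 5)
Your proof is correct. The paper does not prove this lemma at all---it is quoted from \cite{Drag0}---so there is no internal argument to compare against; your argument (Cauchy--Schwarz applied to $\eta$ and the auxiliary vector $w=a\xi+b\zeta$ with $a=\seq{\eta,\xi}$, $b=\seq{\eta,\zeta}$, then the bounds $|a|^2\norm{\xi}^2+|b|^2\norm{\zeta}^2\le(|a|^2+|b|^2)\max\{\norm{\xi}^2,\norm{\zeta}^2\}$ and $2|a||b|\le|a|^2+|b|^2$ on the cross term) is exactly the standard Boas--Bellman-type derivation of Dragomir's inequality. You are also right to flag that the statement as printed is inhomogeneous and that the true form carries $\norm{\eta}^2$ against the whole bracket, i.e. $\seq{\eta,\xi}^2+\seq{\eta,\zeta}^2\le\norm{\eta}^2\bra{\max\{\norm{\xi}^2,\norm{\zeta}^2\}+\abs{\seq{\xi,\zeta}}}$; this discrepancy is harmless in the paper's only use of the lemma (Theorem \ref{thmA1}), where $\eta$ is a unit vector.
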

\begin{theorem}\label{thmA1}
  Let $\A,\B,\C,\D\in\bh$. Then
  \begin{eqnarray*}
    \norm{\B^*\A+\D\C^*}^2 &\leq &\frac{1}{2}\left[\norm{|\A^*\B|^2+|\C\D^*|^2}+\norm{|\A^*\B|^2-|\C\D^*|^2}\right] \\
     &+&w(\C\D^*\A^*\B)+2\norm{\B^*\A}\norm{\D\C^*}.
  \end{eqnarray*}
\end{theorem}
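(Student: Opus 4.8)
The plan is to estimate $\norm{\B^*\A + \D\C^*}^2$ by testing against unit vectors and invoking the previous lemma with a judicious choice of the three vectors. Fix $\xi \in \h$ with $\norm{\xi} = 1$. Since $\norm{\B^*\A + \D\C^*} = \sup_{\norm{\xi}=\norm{\eta}=1} \abs{\seq{(\B^*\A + \D\C^*)\xi,\eta}}$ and, by the triangle inequality, $\abs{\seq{(\B^*\A+\D\C^*)\xi,\eta}} \le \abs{\seq{\A\xi, \B\eta}} + \abs{\seq{\C^*\xi, \D^*\eta}}$, the natural move is to apply inequality \eqref{in-prod} with $\eta$ playing the role of the ``$\eta$'' slot and with $\xi \mapsto \B\eta$, $\zeta \mapsto \D^*\eta$ adjusted appropriately — or, more symmetrically, to set $\eta := \text{(the test vector)}$ and take the two inner products $\seq{\A\xi,\B\eta}$, $\seq{\C^*\xi,\D^*\eta}$. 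Concretely, I would write $\abs{\seq{(\B^*\A+\D\C^*)\xi,\eta}}^2 \le \bra{\abs{\seq{\A\xi,\B\eta}} + \abs{\seq{\C^*\xi,\D^*\eta}}}^2 \le 2\bra{\abs{\seq{\A\xi,\B\eta}}^2 + \abs{\seq{\C^*\xi,\D^*\eta}}^2}$ as a first crude bound, but this loses the cross term, so instead I keep the square expanded as $\abs{\seq{\A\xi,\B\eta}}^2 + \abs{\seq{\C^*\xi,\D^*\eta}}^2 + 2\abs{\seq{\A\xi,\B\eta}}\abs{\seq{\C^*\xi,\D^*\eta}}$ and treat the first two terms via \eqref{in-prod}, the last term via operator-norm bounds.

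For the first two terms, apply Lemma with $\eta$ fixed (unit vector), $\xi := \B^*\A\xi$ wait — better: rewrite $\seq{\A\xi,\B\eta} = \seq{\B^*\A\xi,\eta}$ and $\seq{\C^*\xi,\D^*\eta} = \seq{\D\C^*\xi,\eta}$, so with $u := \B^*\A\xi$, $v := \D\C^*\xi$ we get $\abs{\seq{u,\eta}}^2 + \abs{\seq{v,\eta}}^2 \le \norm{\eta}^2 \max\{\norm{u}^2,\norm{v}^2\} + \abs{\seq{u,v}} = \max\{\norm{\B^*\A\xi}^2, \norm{\D\C^*\xi}^2\} + \abs{\seq{\B^*\A\xi, \D\C^*\xi}}$. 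Now $\seq{\B^*\A\xi,\D\C^*\xi} = \seq{\C\D^*\B^*\A\xi,\xi}$; taking a supremum over $\norm{\xi}=1$ this contributes $w(\C\D^*\B^*\A)$. Note the theorem statement has $w(\C\D^*\A^*\B)$; since $w(\T) = w(\T^*)$ and $(\C\D^*\B^*\A)^* = \A^*\B\D\C^*$, a cyclic/adjoint adjustment (or symmetric choice $u := \A^*\B\eta$-type) reconciles the two — I would simply pick the pairing that lands on $\C\D^*\A^*\B$. For the max term, $\max\{a,b\} = \tfrac12(a+b) + \tfrac12\abs{a-b}$, so $\max\{\norm{\B^*\A\xi}^2, \norm{\D\C^*\xi}^2\} = \tfrac12\seq{(\abs{\A^*\B}^2 + \abs{\C\D^*}^2)\xi,\xi} + \tfrac12\abs{\seq{(\abs{\A^*\B}^2 - \abs{\C\D^*}^2)\xi,\xi}}$, using $\norm{\B^*\A\xi}^2 = \seq{\A^*\B\B^*\A\xi,\xi} = \seq{\abs{\B^*\A}^2\xi,\xi}$; here I must be careful that $\norm{\B^*\A\xi}^2 = \seq{\abs{\B^*\A}^2\xi,\xi}$ while the statement uses $\abs{\A^*\B}^2 = \B^*\A\A^*\B$, so again an adjoint swap in the pairing (use $\seq{\A\xi,\B\eta}$ versus $\seq{\B\xi,\A\eta}$) is needed to match notation. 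Taking $\sup_{\norm\xi=1}$ and using $\norm{H} = \sup_{\norm\xi=1}\seq{H\xi,\xi}$ for self-adjoint $H$ turns these into $\tfrac12\norm{\abs{\A^*\B}^2 + \abs{\C\D^*}^2} + \tfrac12\norm{\abs{\A^*\B}^2 - \abs{\C\D^*}^2}$.

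For the cross term $2\abs{\seq{\A\xi,\B\eta}}\abs{\seq{\C^*\xi,\D^*\eta}}$: bound $\abs{\seq{\A\xi,\B\eta}} \le \norm{\A\xi}\norm{\B\eta} \le \norm{\B^*\A}^{1/2}\cdots$ — actually cleaner: $\abs{\seq{\B^*\A\xi,\eta}} \le \norm{\B^*\A}$ and $\abs{\seq{\D\C^*\xi,\eta}} \le \norm{\D\C^*}$ when $\norm\xi=\norm\eta=1$, giving $2\norm{\B^*\A}\norm{\D\C^*}$. Assembling all three pieces and taking the supremum over $\norm\xi = \norm\eta = 1$ yields the claimed bound. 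The main obstacle is bookkeeping: the sup over $\eta$ interacts with the sup over $\xi$ (in Lemma the vector $\eta$ is ours to choose but the bound's right side must not depend on it, which is automatic since $\norm\eta = 1$ kills it), and one must choose the right one of the four essentially equivalent pairings of $(\A,\B,\C,\D)$ with the inner-product slots so that the absolute values land on exactly $\abs{\A^*\B}^2$, $\abs{\C\D^*}^2$, and $w(\C\D^*\A^*\B)$ rather than their adjoints — a routine but error-prone matching that I would do once carefully at the start.
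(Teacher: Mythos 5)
Your argument is essentially the paper's proof: expand $\abs{\seq{(\B^*\A+\D\C^*)\xi,\zeta}}^2$ by the triangle inequality keeping the cross term, apply the Dragomir lemma (\ref{in-prod}) together with $\max\{\sigma,\tau\}=\tfrac12(\sigma+\tau+\abs{\sigma-\tau})$ to the first two terms, bound the cross term by $2\norm{\B^*\A}\norm{\D\C^*}$, and take suprema. On the pairing issue you flag: the paper resolves it exactly as you suspect, by applying the lemma on the $\zeta$-side with the vectors $\A^*\B\zeta$ and $\C\D^*\zeta$ (which produces $\abs{\A^*\B}^2$, $\abs{\C\D^*}^2$ and the numerical-radius term $w(\D\C^*\A^*\B)$ --- the $w(\C\D^*\A^*\B)$ in the theorem statement is a typo of the paper, as its own subsequent corollary confirms), so no pairing lands literally on the stated operator and your hesitation there reflects an error in the source, not in your proof.
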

\begin{proof} For $\xi,\zeta\in\h$, we have by triangle inequality, we have
\begin{eqnarray}\label{M1}
   |\seq{(\B^*\A+\D\C^*)\xi,\zeta}^2 &\leq& |\seq{\B^*\A\xi,\zeta}|^2+|\seq{\D\C^*\xi,\zeta}|^2 \\
  &+&  +2|\seq{\B^*\A\xi,\zeta}\seq{\D\C^*\xi,\zeta}|.\nonumber
\end{eqnarray}
Now  in inequality (\ref{in-prod}), for all $\xi,\zeta\in \h$, letting $\xi=\A^*\B\zeta$, $\zeta=\C\D^*\zeta$ and
  $\eta=\xi$ with $\norm{\xi}=\norm{\zeta}=1$, we get
  $$|\seq{\xi,\A^*\B\zeta}|^2+|\seq{\xi,\C\D^*\zeta}|^2\leq \max\{\norm{\A^*\B\zeta}^2,\norm{\C\D^*\zeta}^2\}+|\seq{\A^*\B\zeta,\C\D^*\zeta}|.$$
  Now use the fact that
  $$\max\{\sigma,\tau\}=\frac{1}{2}\left[\sigma+\tau+|\sigma-\tau|\right]\quad \mbox{for any}\quad \sigma,\tau\in\R, $$
   we have
   \begin{eqnarray}\label{M2}
     |\seq{\B^*\A\xi,\zeta}|^2+|\seq{\D\C^*\xi,\zeta}|^2 &\leq&\frac{1}{2}\left[\seq{\left |\A^*\B|^2+|\C\D^*|^2\right)\zeta,\zeta}\right.\\
      &+& \left.\left|\seq{\left(|\A^*\B|^2-|\C\D^*|^2\right)\zeta,\zeta}\right|+\left|\seq{\A^*\B\zeta,\C\D^*\zeta}\right|\right].\nonumber
   \end{eqnarray}
   Combining the inequalities (\ref{M1}) and (\ref{M2}), we have
   \begin{eqnarray*}
     |\seq{(\B^*\A+\D\C^*)\xi,\zeta}^2 &\leq &\frac{1}{2}\left[\seq{\left |\A^*\B|^2+|\C\D^*|^2\right)\zeta,\zeta}\right.\\
      &+& \left.\left|\seq{\left(|\A^*\B|^2-|\C\D^*|^2\right)\zeta,\zeta}\right|+\left|\seq{\A^*\B\zeta,\C\D^*\zeta}\right|\right].\\
     &+&2\left|\seq{\B^*\A\zeta,\zeta}\seq{\C\D^*\zeta,\zeta}\right|.
   \end{eqnarray*}
   Taking the supremum over all unit vectors $\xi,\zeta$, we obtain the desired inequality.
\end{proof}
In  Theorem \ref{thmA1}, if we let $\A=\B=\C=\D=\S$, we have
\begin{corollary}
  Let $\S\in\bh$. Then
  \begin{eqnarray*}
    \norm{\S^*\S+\S\S^*}^2 &\leq&\frac{1}{2}\left[\norm{|\S|^4+|\S^*|^4}+\norm{|\S|^4-|\S^*|^4}\right] \\
     &+&w(|\S^*|^4)+2\norm{|\S|^2}\norm{|\S^*|^2}.
  \end{eqnarray*}
\end{corollary}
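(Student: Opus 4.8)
The plan is to deduce the Corollary as the single special case $\A=\B=\C=\D=\S$ of Theorem~\ref{thmA1}; thus the only work is to simplify the four terms on the right-hand side of that inequality.

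First I would record the elementary identities produced by the substitution: $\B^*\A=\A^*\B=\S^*\S=\abs{\S}^2$ and $\D\C^*=\C\D^*=\S\S^*=\abs{\S^*}^2$. Consequently the left-hand side $\norm{\B^*\A+\D\C^*}^2$ is exactly $\norm{\S^*\S+\S\S^*}^2$. For the first bracket, the key observation is that $\abs{\S}^2$ and $\abs{\S^*}^2$ are positive, hence self-adjoint, so $\abs{\,\abs{\S}^2\,}=\abs{\S}^2$ and therefore $\abs{\A^*\B}^2=\abs{\S}^4$, and similarly $\abs{\C\D^*}^2=\abs{\S^*}^4$; this converts $\frac12\big[\norm{\abs{\A^*\B}^2+\abs{\C\D^*}^2}+\norm{\abs{\A^*\B}^2-\abs{\C\D^*}^2}\big]$ into $\frac12\big[\norm{\abs{\S}^4+\abs{\S^*}^4}+\norm{\abs{\S}^4-\abs{\S^*}^4}\big]$.

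It then remains only to rewrite the last two terms of Theorem~\ref{thmA1} under the same substitution, which produces the mixed numerical-radius term together with $2\norm{\abs{\S}^2}\norm{\abs{\S^*}^2}$; collecting the four pieces yields the displayed inequality. I do not anticipate any genuine obstacle --- the Corollary is nothing more than a specialization of Theorem~\ref{thmA1} --- the only step that deserves a word of care being the identity $\abs{\,\abs{\S}^2\,}=\abs{\S}^2$, valid precisely because $\abs{\S}^2\ge 0$, which is what allows the squared absolute values to collapse into fourth powers.
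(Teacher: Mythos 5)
Your proposal coincides with the paper's own derivation: the corollary is obtained purely by setting $\A=\B=\C=\D=\S$ in Theorem~\ref{thmA1}, and your simplifications $|\A^*\B|^2=|\S|^4$ and $|\C\D^*|^2=|\S^*|^4$ (valid because $|\S|^2$ and $|\S^*|^2$ are positive) are exactly the computation required. The one point you gloss over is the numerical-radius term: the substitution turns $w(\C\D^*\A^*\B)$ into $w(\S\S^*\S^*\S)=w\bra{|\S^*|^2|\S|^2}$, which is not the operator $|\S^*|^4=\S\S^*\S\S^*$ unless $\S$ is normal, so the term $w(|\S^*|^4)$ in the displayed statement does not literally come out of the theorem; this is a slip in the corollary as printed rather than in your method, but your claim that collecting the pieces ``yields the displayed inequality'' inherits it and should instead produce $w(\S\S^*\S^*\S)$.
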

In the proof of Theorem \ref{thmA1}, if we let $\xi=\zeta$, we have
\begin{corollary}
  Let $\A,\B,\C,\D\in\bh$. Then
  \begin{eqnarray*}
    w^2(\B^*\A+\D\C^*) &\leq&\frac{1}{2}\left[\norm{|\A^*\B|^2+|\C\D^*|^2}+\norm{|\A^*\B|^2-|\C\D^*|^2}\right] \\
     &+&w(\D\C^*\A^*\B)+2w(\B^*\A)w(\D\C^*).
  \end{eqnarray*}
\end{corollary}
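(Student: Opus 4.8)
The plan is to run through the proof of Theorem~\ref{thmA1} essentially verbatim, the only modification being that we take $\xi=\zeta$ from the outset rather than letting the two unit vectors vary independently; then the left-hand side automatically produces $w^2(\B^*\A+\D\C^*)$ after passing to the supremum.

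First I would start from the triangle inequality exactly as in \eqref{M1}, but with $\xi=\zeta$, so that for every unit vector $\xi\in\h$,
\[
|\seq{(\B^*\A+\D\C^*)\xi,\xi}|^2\le |\seq{\B^*\A\xi,\xi}|^2+|\seq{\D\C^*\xi,\xi}|^2+2\,|\seq{\B^*\A\xi,\xi}\seq{\D\C^*\xi,\xi}|.
\]
Next I would feed the inequality \eqref{in-prod} with $\eta=\xi$ and with the vectors $\A^*\B\xi$ and $\C\D^*\xi$ in the remaining two slots. Using $\seq{\xi,\A^*\B\xi}=\seq{\B^*\A\xi,\xi}$, $\seq{\xi,\C\D^*\xi}=\seq{\D\C^*\xi,\xi}$, the identity $\max\{\sigma,\tau\}=\tfrac12(\sigma+\tau+|\sigma-\tau|)$ applied to $\sigma=\norm{\A^*\B\xi}^2=\seq{|\A^*\B|^2\xi,\xi}$ and $\tau=\norm{\C\D^*\xi}^2=\seq{|\C\D^*|^2\xi,\xi}$, and the identity $\seq{\A^*\B\xi,\C\D^*\xi}=\seq{(\C\D^*)^*\A^*\B\,\xi,\xi}=\seq{\D\C^*\A^*\B\,\xi,\xi}$, I obtain
\[
|\seq{\B^*\A\xi,\xi}|^2+|\seq{\D\C^*\xi,\xi}|^2\le \tfrac12\Big[\seq{(|\A^*\B|^2+|\C\D^*|^2)\xi,\xi}+|\seq{(|\A^*\B|^2-|\C\D^*|^2)\xi,\xi}|\Big]+|\seq{\D\C^*\A^*\B\,\xi,\xi}|.
\]

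Combining the two displays and taking the supremum over all unit vectors $\xi$ finishes the argument, provided one bounds the supremum of the sum by the sum of the suprema term by term: the first two terms yield $\tfrac12\big(\norm{|\A^*\B|^2+|\C\D^*|^2}+\norm{|\A^*\B|^2-|\C\D^*|^2}\big)$, where for the second one we use that $|\A^*\B|^2-|\C\D^*|^2$ is self-adjoint so its numerical radius coincides with its norm; the third term gives $w(\D\C^*\A^*\B)$; and the cross term is controlled by $2\sup_{\norm{\xi}=1}|\seq{\B^*\A\xi,\xi}|\cdot\sup_{\norm{\xi}=1}|\seq{\D\C^*\xi,\xi}|=2\,w(\B^*\A)\,w(\D\C^*)$.

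There is no genuinely hard step here, since the statement is a direct specialization of Theorem~\ref{thmA1}; the only places demanding a little care are the adjoint bookkeeping when substituting into \eqref{in-prod}, so that the mixed term comes out exactly as $\D\C^*\A^*\B$ and not as a superficially similar rearrangement, and the observation that replacing $\sup$ of a sum by the sum of $\sup$'s only ever produces an upper bound, which is precisely what is wanted.
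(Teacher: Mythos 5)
Your proposal is correct and is essentially identical to the paper's argument: the paper proves this corollary precisely by repeating the proof of Theorem~\ref{thmA1} with $\xi=\zeta$, which is what you do (with the adjoint bookkeeping $\seq{\A^*\B\xi,\C\D^*\xi}=\seq{\D\C^*\A^*\B\xi,\xi}$ correctly yielding the $w(\D\C^*\A^*\B)$ term). No issues.
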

The following lemma gives a basic but useful extension for four operators of the Schwarz
inequality due to Dragomir \cite{Drag4}.
\begin{lemma}\label{D1}
  Let $\A,\B,\C,\D\in\bh$. Then for $\xi,\zeta\in\h$ we have the inequality
  \begin{equation*}
    |\seq{\D\C\B\A\xi,\zeta}|^2\leq \seq{|\B\A|^2\xi,\xi}\seq{|(\D\C)^*|^2\zeta,\zeta}
  \end{equation*}
  The equality case holds if and only if the vectors $\B\A\xi$ and $C^*D^*\zeta$ are linearly dependent in $\h$.
\end{lemma}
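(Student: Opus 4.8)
The plan is to reduce this four–operator statement to the ordinary Cauchy--Schwarz inequality in $\h$ by a single adjoint manipulation. First I would observe that, for $\xi,\zeta\in\h$, associativity of operator composition together with the defining property of the adjoint gives
\[
\seq{\D\C\B\A\xi,\zeta}=\seq{(\B\A)\xi,(\D\C)^*\zeta}.
\]
Applying the Cauchy--Schwarz inequality in $\h$ to the two vectors $u=\B\A\xi$ and $v=(\D\C)^*\zeta$ then yields
\[
|\seq{\D\C\B\A\xi,\zeta}|^2=|\seq{u,v}|^2\leq \norm{u}^2\norm{v}^2=\norm{\B\A\xi}^2\,\norm{(\D\C)^*\zeta}^2.
\]

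Next I would convert each of the two norms on the right into a quadratic form by means of the elementary identity $\norm{Tx}^2=\seq{T^*Tx,x}=\seq{\abs{T}^2x,x}$, valid for every $T\in\bh$ and $x\in\h$. Taking $T=\B\A$ gives $\norm{\B\A\xi}^2=\seq{\abs{\B\A}^2\xi,\xi}$, and taking $T=(\D\C)^*$ gives $\norm{(\D\C)^*\zeta}^2=\seq{\abs{(\D\C)^*}^2\zeta,\zeta}$. Substituting both into the previous display produces exactly
\[
|\seq{\D\C\B\A\xi,\zeta}|^2\leq \seq{\abs{\B\A}^2\xi,\xi}\,\seq{\abs{(\D\C)^*}^2\zeta,\zeta},
\]
which is the asserted inequality. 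For the equality clause I would simply invoke the equality case of Cauchy--Schwarz: $|\seq{u,v}|=\norm{u}\,\norm{v}$ precisely when $u$ and $v$ are linearly dependent; tracing this back, equality in the lemma holds if and only if $\B\A\xi$ and $(\D\C)^*\zeta=\C^*\D^*\zeta$ are linearly dependent in $\h$.

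There is essentially no serious obstacle here: the argument is a two–line application of Cauchy--Schwarz, and the equality condition falls out of the same computation. The only place demanding (mild) care is the bookkeeping with adjoints, namely recognizing $(\D\C)^*=\C^*\D^*$ so that the absolute value appearing on the right-hand side is correctly $\abs{(\D\C)^*}$ rather than $\abs{\D\C}$, and keeping the roles of the two slots straight so that $\abs{\B\A}^2$ is paired with $\xi$ and $\abs{(\D\C)^*}^2$ with $\zeta$.
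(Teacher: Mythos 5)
Your proof is correct. The paper itself gives no proof of this lemma --- it is quoted from Dragomir's work \cite{Drag4} --- and your two-line argument (rewrite $\seq{\D\C\B\A\xi,\zeta}$ as $\seq{\B\A\xi,(\D\C)^*\zeta}$, apply Cauchy--Schwarz, and convert the norms via $\norm{T x}^2=\seq{\abs{T}^2x,x}$) is exactly the standard derivation, including the correct identification of the equality case with linear dependence of $\B\A\xi$ and $\C^*\D^*\zeta$.
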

The following lemma, known as the H\"older-McCarthy inequality, is a well-known
 conclusion derived from Jensen's inequality and the spectral theorem for positive operators (see \cite{kit1}).
\begin{lemma}\label{Holder}
  Let $\T\in\bh$, $\T\geq 0$  and let $\xi\in\h$ be any unit vector. Then  we have
  \begin{enumerate}
    \item [(i)] $\seq{\T\xi,\xi}^r\leq \seq{\T^r\xi,\xi}$ for $r\geq 1$.
    \item [(ii)] $\seq{\T^r\xi,\xi}\leq \seq{\T\xi,\xi}^r$ for $0<r\leq 1$.
    \item [(iii)] If $\T$ is invertible, then $\seq{\T\xi,\xi}^r\leq \seq{\T^r\xi,\xi}$  for all $r<0$.
  \end{enumerate}
\end{lemma}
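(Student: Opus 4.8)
The plan is to reduce all three parts to a single application of Jensen's inequality via the spectral theorem for the positive operator $\T$. First I would use the spectral theorem to write $\T=\int_{\sigma(\T)}t\,dE(t)$, where $E$ is the projection-valued spectral measure of $\T$ and $\sigma(\T)\subseteq[0,\infty)$ since $\T\geq 0$. For the fixed unit vector $\xi$ I would then introduce the scalar measure $\mu(\Delta)=\seq{E(\Delta)\xi,\xi}$ on the Borel subsets of $\sigma(\T)$; because $\norm{\xi}=1$, the measure $\mu$ is a Borel probability measure. Under this correspondence the functional calculus gives, for every real exponent $s$ for which $t^s$ is $\mu$-integrable on $\sigma(\T)$, the identity $\seq{\T^s\xi,\xi}=\int_{\sigma(\T)}t^s\,d\mu(t)$; in particular $\seq{\T\xi,\xi}=\int_{\sigma(\T)}t\,d\mu(t)$.

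Next I would apply Jensen's inequality to the function $t\mapsto t^r$ against the probability measure $\mu$, tracking its convexity. For part (i), when $r\geq 1$ the map $t\mapsto t^r$ is convex on $[0,\infty)$, so Jensen yields $\bra{\int t\,d\mu}^r\leq\int t^r\,d\mu$, which is exactly $\seq{\T\xi,\xi}^r\leq\seq{\T^r\xi,\xi}$. For part (ii), when $0<r\leq 1$ the map $t\mapsto t^r$ is concave on $[0,\infty)$, so the reverse Jensen inequality gives $\int t^r\,d\mu\leq\bra{\int t\,d\mu}^r$, i.e. $\seq{\T^r\xi,\xi}\leq\seq{\T\xi,\xi}^r$.

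For part (iii) I would first exploit the invertibility hypothesis: it forces $0\notin\sigma(\T)$, hence $\sigma(\T)\subseteq[m,\norm{\T}]$ for some $m>0$, so $\mu$ is supported away from the origin and all integrals $\int t^r\,d\mu$ with $r<0$ are finite. On $(0,\infty)$ the map $t\mapsto t^r$ with $r<0$ is again convex, so Jensen once more gives $\bra{\int t\,d\mu}^r\leq\int t^r\,d\mu$, that is $\seq{\T\xi,\xi}^r\leq\seq{\T^r\xi,\xi}$.

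The argument is essentially routine once the measure-theoretic reformulation is set up; the only points demanding care are that $\mu$ is genuinely a probability measure — this is precisely where $\norm{\xi}=1$ is used, and it is what licenses Jensen — and, in part (iii), that invertibility places $\sigma(\T)$ inside a compact subinterval of $(0,\infty)$, which is what keeps the negative powers $\T^r$ bounded and the associated integrals finite. I do not expect any genuinely hard step: the main "obstacle" is simply bookkeeping the direction of the inequality according to whether $t\mapsto t^r$ is convex or concave on the interval containing $\sigma(\T)$.
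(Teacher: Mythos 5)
Your proof is correct and follows exactly the route the paper indicates for this lemma: the paper does not write out a proof but presents it as the H\"older--McCarthy inequality, ``a well-known conclusion derived from Jensen's inequality and the spectral theorem for positive operators,'' which is precisely your spectral-measure-plus-Jensen argument. The handling of convexity versus concavity in the three cases, and the use of invertibility to keep the spectrum in a compact subinterval of $(0,\infty)$ for part (iii), are all as they should be.
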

The next result is well known in the literature as the Mond-Pe\v cari\'c inequality \cite{MP}.
\begin{lemma}\label{yahya1}
  If $\psi$ is a convex function on a real interval $J$ containing the spectrum of the
self-adjoint operator $\T$, then for any unit vector $\xi\in\h$,
\begin{equation}\label{MP}
  \psi(\seq{\T\xi,\xi})\leq \seq{\psi(\T)\xi,\xi}
\end{equation}
and the reverse inequality holds if $\psi$ is concave.
\end{lemma}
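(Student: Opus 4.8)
The plan is to reduce the statement to the elementary supporting-line characterisation of convexity and then transport a scalar inequality to an operator inequality through the continuous functional calculus. First I would note that, since $\T=\T^*$ and $\norm{\xi}=1$, the quantity $t_0:=\seq{\T\xi,\xi}$ is a real number lying in $[\min\sigma(\T),\max\sigma(\T)]$ (this is the spectral-theorem fact that $\seq{\T\xi,\xi}$ is an average of the spectrum against the probability measure $\mu_\xi$ given by $d\mu_\xi(\lambda)=d\seq{E_\lambda\xi,\xi}$). As $J$ is an interval containing $\sigma(\T)$, it contains this convex hull, so $t_0\in J$ and $\psi(t_0)$ is meaningful.

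Next I would invoke convexity of $\psi$ on $J$: there is a real number $k\in\partial\psi(t_0)$ (the subdifferential is nonempty at every point of an interval, using a one-sided derivative at an endpoint) with
\begin{equation*}
  \psi(t)\geq \psi(t_0)+k(t-t_0)\qquad(t\in J).
\end{equation*}
Since $\sigma(\T)\subseteq J$, this is an inequality between continuous real functions on $\sigma(\T)$, so the functional calculus yields the operator inequality $\psi(\T)\geq \psi(t_0)I+k(\T-t_0 I)$ in $\bh$. Pairing both sides with the unit vector $\xi$ and using $\seq{(\T-t_0 I)\xi,\xi}=t_0-t_0=0$ gives
\begin{equation*}
  \seq{\psi(\T)\xi,\xi}\geq \psi(t_0)\seq{\xi,\xi}+k\seq{(\T-t_0 I)\xi,\xi}=\psi(t_0)=\psi\bra{\seq{\T\xi,\xi}},
\end{equation*}
which is exactly (\ref{MP}). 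The concave case is immediate: apply the convex case to $-\psi$ and negate the resulting inequality.

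The one point that requires a little care is the existence of a supporting line at $t_0$ when $t_0$ is an endpoint of $J$ — which genuinely happens when $\xi$ is an eigenvector of $\T$ for an extreme spectral value — but this is covered by using $\psi'_+(t_0)$ at a left endpoint and $\psi'_-(t_0)$ at a right endpoint, both of which exist for a convex function, so no real case split is needed beyond naming the correct slope. An alternative proof of equal brevity writes $\seq{\psi(\T)\xi,\xi}=\int_{\sigma(\T)}\psi\,d\mu_\xi$ and applies the integral (Jensen) inequality to the probability measure $\mu_\xi$; I would keep the supporting-line version since it stays entirely within the elementary operator-inequality toolkit used elsewhere in the paper, but either route is acceptable. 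I do not anticipate any genuine obstacle here — the lemma is classical, and the only thing to get right is the continuity bookkeeping needed to justify lifting the scalar inequality to $\bh$.
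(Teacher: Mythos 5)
Your proof is correct; the paper itself offers no argument for this lemma, simply quoting it as the classical Mond--Pe\v{c}ari\'{c} inequality from \cite{MP}, and the supporting-line plus functional-calculus route you take is exactly the standard proof of that cited result. The only point I would tighten is the endpoint case: your slope $k$ is guaranteed to be finite at interior points of $J$, but if $t_0=\seq{\T\xi,\xi}$ lies at an endpoint of $J$ it must equal $\min\sigma(\T)$ or $\max\sigma(\T)$, where a convex $\psi$ can have an infinite one-sided derivative; in that situation, however, the spectral measure $\mu_\xi$ is a point mass at $t_0$, so (\ref{MP}) holds with equality and no supporting line is needed.
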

The forth  lemma is a direct consequence of \cite[Theorem 2.3]{AS}.
\begin{lemma}\label{yahya2}
  Let $\psi$ be a non-negative non-decreasing convex function on $[0,\infty)$ and let $\T,\S\in \h$
   be positive operators. Then for any $0<\mu<1$,
   \begin{equation}\label{AS}
     \norm{\psi(\mu \T+(1-\mu)\S)}\leq \norm{\mu \psi(\T)+(1-\mu)\psi(\S)}.
   \end{equation}
\end{lemma}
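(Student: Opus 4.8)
As indicated, \eqref{AS} is the special case $\norm{\cdot}$ of \cite[Theorem~2.3]{AS}, which asserts the corresponding estimate for every unitarily invariant norm. If one prefers a self-contained derivation, the plan is to deduce \eqref{AS} from just two ingredients: the identity $\norm{f(R)}=f(\norm{R})$ valid for a positive operator $R$ and a non-negative, non-decreasing, continuous $f$, and the Mond--Pe\v cari\'c inequality (Lemma~\ref{yahya1}); no operator convexity of $\psi$ is used, which is why the conclusion holds for an arbitrary continuous convex non-decreasing $\psi$.

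First I would put $R=\mu\T+(1-\mu)\S$, which is positive since $\T,\S\geq 0$ and $0<\mu<1$. Because $\psi\geq 0$ is non-decreasing and $\norm{R}=\max\sigma(R)$, the continuous functional calculus yields $0\leq\psi(R)\leq\psi(\norm{R})I$, hence $\norm{\psi(\mu\T+(1-\mu)\S)}=\norm{\psi(R)}\leq\psi(\norm{R})$. This reduces the lemma to proving $\psi(\norm{R})\leq\norm{\mu\psi(\T)+(1-\mu)\psi(\S)}$.

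For the second half I would use that $R\geq 0$ forces $\norm{R}=\sup_{\norm{\xi}=1}\seq{R\xi,\xi}$, so I can choose unit vectors $\xi_k\in\h$ with $\seq{R\xi_k,\xi_k}\to\norm{R}$. For each $k$, from $\seq{R\xi_k,\xi_k}=\mu\seq{\T\xi_k,\xi_k}+(1-\mu)\seq{\S\xi_k,\xi_k}$, the convexity of $\psi$ gives $\psi(\seq{R\xi_k,\xi_k})\leq\mu\,\psi(\seq{\T\xi_k,\xi_k})+(1-\mu)\,\psi(\seq{\S\xi_k,\xi_k})$, and Lemma~\ref{yahya1} applied to $\T$ and to $\S$ bounds the right side by $\seq{(\mu\psi(\T)+(1-\mu)\psi(\S))\xi_k,\xi_k}\leq\norm{\mu\psi(\T)+(1-\mu)\psi(\S)}$. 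Letting $k\to\infty$ and using the continuity of $\psi$ gives $\psi(\norm{R})\leq\norm{\mu\psi(\T)+(1-\mu)\psi(\S)}$, and combining the two halves proves \eqref{AS}.

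The only real point of care is the passage from the pointwise estimates in the $\xi_k$ to the norm inequality; this is exactly where positivity of $R$ and of $\mu\psi(\T)+(1-\mu)\psi(\S)$ is used (so that their norms equal suprema of the quadratic forms $\xi\mapsto\seq{\cdot\,\xi,\xi}$) together with continuity of $\psi$ to pass to the limit. If instead one invokes \cite[Theorem~2.3]{AS} directly, the only extra point is that the cited theorem is stated for matrices, so a standard finite-dimensional compression-and-limit argument is needed to transfer it to $\bh$; the route above avoids this altogether.
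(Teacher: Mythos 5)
Your argument is correct, but it is genuinely different from what the paper does: the paper offers no proof at all for this lemma, stating only that it is ``a direct consequence of [Theorem 2.3]'' of the cited reference (Aujla--Silva), which establishes the inequality for every unitarily invariant norm. Your self-contained derivation is sound for the operator norm: the reduction $\norm{\psi(R)}\leq\psi(\norm{R})$ via the spectral mapping/functional calculus is valid for a positive $R$ and a non-negative non-decreasing continuous $\psi$, and the second half correctly combines $\norm{R}=\sup_{\norm{\xi}=1}\seq{R\xi,\xi}$ (which needs $R\geq 0$), scalar convexity of $\psi$, and the Mond--Pe\v cari\'c inequality (Lemma~\ref{yahya1}) applied separately to $\T$ and $\S$, before passing to the limit by continuity. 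The trade-off is clear: the citation route buys the full strength of the unitarily invariant norm statement at the cost of being a matrix theorem that, as you note, needs a compression-and-limit argument to transfer to $\bh$; your route proves only the operator-norm case but is elementary, works directly in infinite dimensions, and uses nothing beyond tools already present in the paper. It is worth noting that your second half is essentially the same mechanism the authors themselves deploy later inside the proof of Theorem~\ref{mohmmm}, so your proof is very much in the spirit of the paper even though the paper does not write it out here.
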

The above four lemmas admit the following more general result.
\begin{theorem}\label{Rahma1}
  Let $\A,\B,\C,\D\in\bh$. If $\psi$ is a non-negative increasing convex function on $[0,\infty)$, then
  for any $0<\mu<1$,
  \begin{equation}\label{T1}
    \psi(w^2(\D\C\B\A))\leq \norm{\mu \psi\left(|\B\A|^{\frac{2}{\mu}}\right)+(1-\mu)  \psi\left(|(\D\C)^*|^{\frac{2}{1-\mu}}\right)}.
  \end{equation}
  In particular,
  \begin{equation}\label{T2}
    w^{2r}(\D\C\B\A)\leq \norm{\mu |\B\A|^{\frac{2r}{\mu}}+(1-\mu)  |(\D\C)^*|^{\frac{2r}{1-\mu}}}
  \end{equation}
  for all $r\geq 1$.
\end{theorem}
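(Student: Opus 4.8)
The plan is to chain the four lemmas stated above in the order in which they appear. First I would apply Lemma~\ref{D1} with $\zeta=\xi$ for an arbitrary unit vector $\xi\in\h$, giving
\[
|\seq{\D\C\B\A\xi,\xi}|^2\leq \seq{|\B\A|^2\xi,\xi}\,\seq{|(\D\C)^*|^2\xi,\xi}.
\]
Then I would apply the weighted arithmetic--geometric mean inequality $a^\mu b^{1-\mu}\leq \mu a+(1-\mu)b$ ($a,b\geq 0$) to the two factors on the right, after rewriting $\seq{|\B\A|^2\xi,\xi}=\bra{\seq{|\B\A|^2\xi,\xi}^{1/\mu}}^{\mu}$ and $\seq{|(\D\C)^*|^2\xi,\xi}=\bra{\seq{|(\D\C)^*|^2\xi,\xi}^{1/(1-\mu)}}^{1-\mu}$; this yields
\[
|\seq{\D\C\B\A\xi,\xi}|^2\leq \mu\,\seq{|\B\A|^2\xi,\xi}^{1/\mu}+(1-\mu)\,\seq{|(\D\C)^*|^2\xi,\xi}^{1/(1-\mu)}.
\]

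Next I would invoke the H\"older--McCarthy inequality, Lemma~\ref{Holder}(i), with exponents $1/\mu\geq 1$ and $1/(1-\mu)\geq 1$ applied to the positive operators $|\B\A|^2$ and $|(\D\C)^*|^2$, using the functional-calculus identities $(|\B\A|^2)^{1/\mu}=|\B\A|^{2/\mu}$ and $(|(\D\C)^*|^2)^{1/(1-\mu)}=|(\D\C)^*|^{2/(1-\mu)}$, to arrive at
\[
|\seq{\D\C\B\A\xi,\xi}|^2\leq \seq{T\xi,\xi},\qquad T:=\mu\,|\B\A|^{2/\mu}+(1-\mu)\,|(\D\C)^*|^{2/(1-\mu)}\geq 0.
\]
Since $\psi$ is non-negative and increasing (and, being convex on $[0,\infty)$, automatically continuous --- the only possible jump, at $0$, is excluded by monotonicity together with convexity), applying $\psi$ preserves the last inequality; then the Mond--Pe\v cari\'c inequality, Lemma~\ref{yahya1}, gives $\psi(\seq{T\xi,\xi})\leq \seq{\psi(T)\xi,\xi}\leq \norm{\psi(T)}$. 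Finally, Lemma~\ref{yahya2} applied to the positive operators $|\B\A|^{2/\mu}$ and $|(\D\C)^*|^{2/(1-\mu)}$ with weight $\mu$ gives $\norm{\psi(T)}\leq \norm{\mu\,\psi(|\B\A|^{2/\mu})+(1-\mu)\,\psi(|(\D\C)^*|^{2/(1-\mu)})}$, which is precisely the right-hand side of \eqref{T1}. Taking the supremum over unit vectors $\xi$ on the left, and using continuity and monotonicity of $\psi$ to move it outside the supremum (recall $w^2(\D\C\B\A)=\sup_{\norm{\xi}=1}|\seq{\D\C\B\A\xi,\xi}|^2$), delivers \eqref{T1}. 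Then \eqref{T2} follows immediately by specializing to $\psi(t)=t^r$, which is non-negative, increasing and convex on $[0,\infty)$ for $r\geq 1$, so that $\psi(w^2(\D\C\B\A))=w^{2r}(\D\C\B\A)$ and $\psi(|\B\A|^{2/\mu})=|\B\A|^{2r/\mu}$, $\psi(|(\D\C)^*|^{2/(1-\mu)})=|(\D\C)^*|^{2r/(1-\mu)}$.

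The step I expect to need the most care is the last one: interchanging $\psi$ with the supremum over unit vectors, i.e. justifying $\psi\bra{\sup_{\norm{\xi}=1}|\seq{\D\C\B\A\xi,\xi}|^2}\leq \sup_{\norm{\xi}=1}\psi\bra{|\seq{\D\C\B\A\xi,\xi}|^2}$. This rests entirely on $\psi$ being nondecreasing and continuous on $[0,\infty)$, so it is worth recording explicitly that these two hypotheses together force continuity (including at the endpoint $0$). A secondary point that must be stated cleanly is the chain of spectral identities $(|\B\A|^2)^{1/\mu}=|\B\A|^{2/\mu}$ and its analogue, since these are exactly what make the H\"older--McCarthy step land on the operators appearing in the statement; everything else is a routine composition of the four lemmas.
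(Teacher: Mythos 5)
Your argument is correct and follows essentially the same route as the paper: Lemma~\ref{D1} with $\zeta=\xi$, then the weighted AM--GM combined with H\"older--McCarthy to reach $|\seq{\D\C\B\A\xi,\xi}|^2\leq\seq{\bra{\mu|\B\A|^{2/\mu}+(1-\mu)|(\D\C)^*|^{2/(1-\mu)}}\xi,\xi}$, and finally $\psi$ together with Lemma~\ref{yahya2}. The only (immaterial) differences are that you apply AM--GM before rather than after the power-function step, and you pass $\psi$ through via Mond--Pe\v cari\'c instead of first taking the supremum and using $\psi(\norm{\T})=\norm{\psi(\T)}$ for positive $\T$ as the paper does.
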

\begin{proof}
  For any unit vector $\xi\in\h$, we have
  \begin{eqnarray*}
    |\seq{\D\C\B\A\xi,\xi}|^2 &\leq& \seq{|\B\A|^2\xi,\xi}\seq{|(\D\C)^*|^2\xi,\xi}\quad(\mbox{by (\ref{D1})}) \\
     &\leq&  \seq{|\B\A|^{\frac{2}{\mu}}\xi,\xi}^{\mu}\seq{|(\D\C)^*|^{\frac{2}{1-\mu}}\xi,\xi}^{1-\mu}\\
     &&\quad(\mbox{by Lemma \ref{MP} for concavity of}\,\,\psi(t)=t^{\mu}\,\,\mbox{for}\,\, 0<\mu<1)\\
     &\leq&\mu \seq{|\B\A|^{\frac{2}{\mu}}\xi,\xi}+(1-\mu)\seq{|(\D\C)^*|^{\frac{2}{1-\mu}}\xi,\xi}\\
     &&\quad(\mbox{by weighted arithmetic-geometric mean inequality }).
  \end{eqnarray*}
  Taking the supremum over $\xi\in\h$ with $\norm{\xi}=1$, we infer that
  \begin{equation}\label{T3}
    w^2(\D\C\B\A)\leq \norm{\mu |\B\A|^{\frac{2}{\mu}}+(1-\mu)|(\D\C)^*|^{\frac{2}{1-\mu}}}.
  \end{equation}
  On account of assumptions on $\psi$, we can write
  \begin{eqnarray*}
    \psi(w^2(\D\C\B\A)) &\leq &\psi\left(\norm{\mu |\B\A|^{\frac{2}{\mu}}+(1-\mu)|(\D\C)^*|^{\frac{2}{1-\mu}}}\right) \\
     &\leq& \norm{\mu \psi\left(|\B\A|^{\frac{2}{\mu}}\right)+(1-\mu)\psi\left(|(\D\C)^*|^{\frac{2}{1-\mu}}\right)}\quad(\mbox{by Lemma \ref{AS}}).
  \end{eqnarray*}
 The inequality (\ref{T2}) follows directly from (\ref{T1}) by taking $\psi(t)=t^r$ ($r\geq 1$).
\end{proof}
In the following result, we want to improve (\ref{shab1}) under certain mild situations. We'll need the arithmetic-geometric mean inequality refinement \cite{TMFH} to do this.
\begin{lemma}\label{Mohd1}
  Suppose that $\mu,\nu>0$ and positive real numbers $\delta,\Delta$ satisfy
  $$\min\{\mu,\nu\}\leq \delta<\Delta\leq \max\{\mu,\nu\}.$$
  Then
  $$\frac{\Delta+\delta}{2\sqrt{\delta\Delta}}\sqrt{\mu\nu}\leq \frac{\mu+\nu}{2}.$$
\end{lemma}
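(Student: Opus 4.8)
The plan is to strip the lemma down to an elementary polynomial inequality and close it by a factorization. Since both the conclusion $\frac{\Delta+\delta}{2\sqrt{\delta\Delta}}\sqrt{\mu\nu}\leq\frac{\mu+\nu}{2}$ and the hypothesis $\min\{\mu,\nu\}\leq\delta<\Delta\leq\max\{\mu,\nu\}$ are symmetric in $\mu$ and $\nu$, I would first assume without loss of generality that $\mu\leq\nu$, so that the hypothesis reads $\mu\leq\delta<\Delta\leq\nu$. All four numbers being positive, the asserted inequality is equivalent to $(\Delta+\delta)\sqrt{\mu\nu}\leq(\mu+\nu)\sqrt{\delta\Delta}$, and, since both sides are nonnegative, to its squared form $(\Delta+\delta)^2\mu\nu\leq(\mu+\nu)^2\delta\Delta$.

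Next I would expand both sides of the squared inequality. The cross terms $2\delta\Delta\mu\nu$ occur identically on the two sides and cancel, so the inequality collapses to $(\Delta^2+\delta^2)\mu\nu\leq(\mu^2+\nu^2)\delta\Delta$, that is, $(\Delta^2+\delta^2)\mu\nu-(\mu^2+\nu^2)\delta\Delta\leq 0$. The key step is the identity $(\Delta^2+\delta^2)\mu\nu-(\mu^2+\nu^2)\delta\Delta=(\Delta\nu-\delta\mu)(\mu\Delta-\delta\nu)$, which is verified by direct expansion.

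It then only remains to read off the signs of the two factors from the ordering $\mu\leq\delta<\Delta\leq\nu$. Because $\Delta\geq\delta$ and $\nu\geq\mu$ with all quantities positive, the first factor satisfies $\Delta\nu-\delta\mu\geq 0$; because $\mu\leq\delta$ and $\Delta\leq\nu$ with all quantities positive, the second factor satisfies $\mu\Delta-\delta\nu\leq 0$. Hence their product is $\leq 0$, which is exactly the required inequality, and retracing the equivalences yields the lemma. A slicker variant: put $t=\sqrt{\mu/\nu}$ and $s=\sqrt{\delta/\Delta}$; the hypotheses force $0<t\leq s<1$, and the inequality becomes $(t-s)(1-ts)\leq 0$, which is immediate. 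Either way there is no genuine obstacle — the only non-mechanical move is spotting the factorization (or the substitution), after which the proof is a routine sign check.
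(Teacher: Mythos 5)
Your argument is correct: the reduction to $(\Delta^2+\delta^2)\mu\nu\leq(\mu^2+\nu^2)\delta\Delta$ after cancelling the cross terms is right, the factorization $(\Delta^2+\delta^2)\mu\nu-(\mu^2+\nu^2)\delta\Delta=(\Delta\nu-\delta\mu)(\mu\Delta-\delta\nu)$ checks out by direct expansion, and under the normalization $\mu\leq\delta<\Delta\leq\nu$ the two factors do have opposite signs. The substitution variant is also sound: with $t=\sqrt{\mu/\nu}$ and $s=\sqrt{\delta/\Delta}$ the inequality is exactly $s+s^{-1}\leq t+t^{-1}$, and the hypotheses give $0<t\leq s<1$, on which interval $x\mapsto x+x^{-1}$ is decreasing. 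One caveat worth noting for completeness: when $\mu=\nu$ the hypothesis $\min\{\mu,\nu\}\leq\delta<\Delta\leq\max\{\mu,\nu\}$ is vacuous, so the lemma is only nontrivial for $\mu\neq\nu$; your WLOG handles this automatically.

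There is, however, nothing in the paper to compare against: the authors state this lemma as an imported result (the ``arithmetic-geometric mean inequality refinement'' of the cited reference) and give no proof of it. So your contribution here is a self-contained elementary verification of a black-box ingredient, which is perfectly legitimate but is doing work the paper deliberately outsources. Conceptually, your $(s-t)$ formulation makes transparent what the lemma really says --- the Kantorovich-type factor $\frac{\Delta+\delta}{2\sqrt{\delta\Delta}}$ is the value of the same convex function $\frac{x+x^{-1}}{2}$ at an interior ratio, hence smaller than its value at the extreme ratio $\sqrt{\mu/\nu}$ --- and that is exactly the mechanism the original reference exploits.
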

The following lemma is very useful in the proof of the next result.
\begin{lemma}\label{Mohd2}
  Let $\psi$ be a non-negative increasing convex function on $[0,\infty)$, $\psi(0)=0$ and $\alpha\in [0,1]$.
  Then $\psi(\alpha t)\leq \alpha \psi(t)$.
\end{lemma}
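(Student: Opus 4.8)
The plan is to read off the inequality directly from the definition of convexity applied to the two points $0$ and $t$, using the normalization $\psi(0)=0$. Concretely, for a fixed $t\in[0,\infty)$ and $\alpha\in[0,1]$, I would write
\[
\alpha t=\alpha\cdot t+(1-\alpha)\cdot 0,
\]
so that $\alpha t$ is a convex combination of the two points $0,t\in[0,\infty)$, both of which lie in the domain of $\psi$ (here I use that $[0,\infty)$ is an interval, hence convex). Convexity of $\psi$ then gives
\[
\psi(\alpha t)=\psi\bigl(\alpha t+(1-\alpha)0\bigr)\leq \alpha\,\psi(t)+(1-\alpha)\,\psi(0),
\]
and substituting $\psi(0)=0$ yields $\psi(\alpha t)\leq \alpha\,\psi(t)$, as claimed.

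The only bookkeeping is to note that the endpoint cases are covered: for $\alpha=1$ the statement is the trivial equality $\psi(t)\le\psi(t)$, and for $\alpha=0$ it reads $\psi(0)\le 0$, which holds with equality by hypothesis. (In fact the hypotheses that $\psi$ is non-negative and increasing are not needed for this particular conclusion — they are carried along only because the next result uses them — so I would phrase the proof so as to invoke exactly convexity and $\psi(0)=0$.)

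I do not anticipate any real obstacle here: the lemma is a one-line consequence of the definition of a convex function, i.e. the degenerate case of Jensen's inequality with two mass points. The slightly less obvious point worth flagging explicitly in the write-up is simply that $0$ and $t$ both belong to the interval on which $\psi$ is assumed convex, so that the convexity inequality may legitimately be applied at the intermediate point $\alpha t$.
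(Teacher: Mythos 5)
Your proof is correct: the paper states this lemma without proof, and your argument---applying convexity to the combination $\alpha t=\alpha\,t+(1-\alpha)\cdot 0$ and using $\psi(0)=0$---is exactly the standard one-line justification the paper implicitly relies on. Your observation that non-negativity and monotonicity of $\psi$ are not actually needed for this conclusion is also accurate.
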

\begin{theorem}\label{Prof1}
   Let $\A,\B,\C,\D\in\bh$ and let $\psi$ be a non-negative increasing convex function on $[0,\infty)$.
   If
   $$0<|\B\A|^2\leq \delta<\Delta\leq |(\D\C)^*|^2$$
   or
  $$0<|(\D\C)^*|^2\leq \delta<\Delta\leq |\B\A|^2,$$
  then
  \begin{equation}\label{S1}
    \psi(w(\D\C\B\A))\leq \frac{\sqrt{\delta\Delta}}{\delta+\Delta}\norm{\psi\left(|\B\A|^2\right)+\psi\left(|(\D\C)^*|^2\right)}.
  \end{equation}
\end{theorem}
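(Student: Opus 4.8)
The plan is to establish the inequality for the quadratic form at a single unit vector and then pass to the supremum, chaining Lemmas \ref{D1}, \ref{Mohd1}, \ref{Mohd2} and \ref{yahya1}. Fix a unit vector $\xi\in\h$ and set $\mu=\seq{|\B\A|^2\xi,\xi}$ and $\nu=\seq{|(\D\C)^*|^2\xi,\xi}$. Lemma \ref{D1} gives
\[
  |\seq{\D\C\B\A\xi,\xi}|^2\leq\seq{|\B\A|^2\xi,\xi}\seq{|(\D\C)^*|^2\xi,\xi}=\mu\nu .
\]
The two operator hypotheses are exactly what is needed to force $\mu$ and $\nu$ onto opposite sides of the interval $[\delta,\Delta]$: in the first case $|\B\A|^2\leq\delta$ and $\Delta\leq|(\D\C)^*|^2$ give $\mu\leq\delta<\Delta\leq\nu$, while in the second case the roles are interchanged, so in both situations $\min\{\mu,\nu\}\leq\delta<\Delta\leq\max\{\mu,\nu\}$. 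Hence Lemma \ref{Mohd1} applies and yields $\sqrt{\mu\nu}\leq\frac{\sqrt{\delta\Delta}}{\delta+\Delta}(\mu+\nu)$, so that
\[
  |\seq{\D\C\B\A\xi,\xi}|\leq\frac{\sqrt{\delta\Delta}}{\delta+\Delta}\,\bra{\mu+\nu}.
\]

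Next I would apply $\psi$ and bring it inside. Put $\alpha=\frac{2\sqrt{\delta\Delta}}{\delta+\Delta}$; the scalar AM--GM inequality $2\sqrt{\delta\Delta}\leq\delta+\Delta$ shows $\alpha\in[0,1]$, so using first that $\psi$ is increasing, then Lemma \ref{Mohd2}, then convexity of $\psi$,
\[
  \psi\bra{|\seq{\D\C\B\A\xi,\xi}|}\leq\psi\bra{\alpha\cdot\tfrac{\mu+\nu}{2}}\leq\alpha\,\psi\bra{\tfrac{\mu+\nu}{2}}\leq\tfrac{\alpha}{2}\bra{\psi(\mu)+\psi(\nu)}=\frac{\sqrt{\delta\Delta}}{\delta+\Delta}\bra{\psi(\mu)+\psi(\nu)} .
\]
Since $\psi$ is convex on $[0,\infty)$, which contains the spectra of the positive operators $|\B\A|^2$ and $|(\D\C)^*|^2$, the Mond--Pe\v cari\'c inequality (Lemma \ref{yahya1}) gives $\psi(\mu)\leq\seq{\psi(|\B\A|^2)\xi,\xi}$ and $\psi(\nu)\leq\seq{\psi(|(\D\C)^*|^2)\xi,\xi}$, whence
\[
  \psi\bra{|\seq{\D\C\B\A\xi,\xi}|}\leq\frac{\sqrt{\delta\Delta}}{\delta+\Delta}\seq{\bra{\psi(|\B\A|^2)+\psi(|(\D\C)^*|^2)}\xi,\xi}.
\]
Finally I would take the supremum over all unit vectors $\xi\in\h$: on the left $\psi$ is increasing and continuous (an increasing convex function on $[0,\infty)$ is continuous), so the supremum passes through $\psi$ and yields $\psi(w(\D\C\B\A))$; on the right $\psi(|\B\A|^2)+\psi(|(\D\C)^*|^2)$ is a positive operator, so the supremum of its quadratic form over unit vectors is its operator norm. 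This gives exactly inequality (\ref{S1}).

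I expect the only subtle points to be: (i) verifying that the operator ordering hypotheses really do place $\mu$ and $\nu$ on opposite sides of $[\delta,\Delta]$ for \emph{every} unit vector, which is what licenses the use of Lemma \ref{Mohd1}; and (ii) the invocation of Lemma \ref{Mohd2}, which requires $\psi(0)=0$ --- without this normalization one only recovers the coarser constant $\tfrac12$ in place of $\tfrac{\sqrt{\delta\Delta}}{\delta+\Delta}$, so $\psi(0)=0$ should be understood among the hypotheses (it is part of Lemma \ref{Mohd2}). Everything else is a routine concatenation of the quoted lemmas, and the scheme parallels the proof of Theorem \ref{Rahma1}.
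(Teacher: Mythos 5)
Your argument is correct. The first half coincides with the paper's: both apply Lemma \ref{D1} and then Lemma \ref{Mohd1}, using the order hypotheses to place $\mu=\seq{|\B\A|^2\xi,\xi}$ and $\nu=\seq{|(\D\C)^*|^2\xi,\xi}$ on opposite sides of $[\delta,\Delta]$, and arrive at $|\seq{\D\C\B\A\xi,\xi}|\leq \frac{\sqrt{\delta\Delta}}{\delta+\Delta}\seq{\bra{|\B\A|^2+|(\D\C)^*|^2}\xi,\xi}$. Where you genuinely differ is in how $\psi$ is brought in. The paper takes the supremum at this stage to get $w(\D\C\B\A)\leq \frac{\sqrt{\delta\Delta}}{\delta+\Delta}\norm{|\B\A|^2+|(\D\C)^*|^2}$, applies $\psi$ to that scalar, extracts the factor $\alpha=\frac{2\sqrt{\delta\Delta}}{\delta+\Delta}$ by Lemma \ref{Mohd2}, converts $\psi$ of the norm into the norm of $\psi$ of the positive operator, and finishes with the operator-norm Jensen inequality of Lemma \ref{yahya2} at weight $\tfrac12$. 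You instead stay at a fixed unit vector throughout: after Lemma \ref{Mohd2} you split $\psi\bra{\tfrac{\mu+\nu}{2}}$ by ordinary scalar convexity, push $\psi$ inside each quadratic form via the Mond--Pe\v cari\'c inequality (Lemma \ref{yahya1}), and defer the supremum to the very end. The net result is the same bound, but your route avoids Lemma \ref{yahya2} entirely, trading it for the scalar Jensen step plus the (easy) observation that the supremum commutes with the continuous increasing function $\psi$ --- a slightly more elementary and self-contained finish, and the same template the paper uses in Theorem \ref{Rahma1}. Your two caveats are well taken and apply equally to the paper's own proof: the operator hypotheses do force $\min\{\mu,\nu\}\leq\delta<\Delta\leq\max\{\mu,\nu\}$ for every unit vector, and Lemma \ref{Mohd2} does require the normalization $\psi(0)=0$, which is absent from the statement of Theorem \ref{Prof1} but is needed by both arguments.
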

\begin{proof}
  It follows from Lemma \ref{D1} that
  \begin{equation}\label{S2}
    |\seq{\D\C\B\A\xi,\xi}| \leq \sqrt{\seq{|\B\A|^2\xi,\xi}\seq{|(\D\C)^*|^2\xi,\xi}}.
  \end{equation}
  \begin{eqnarray}\label{S3}
     &\leq&\frac{\sqrt{\Delta\delta}}{\Delta+\delta}\left[\seq{|\B\A|^2\xi,\xi}+\seq{|(\D\C)^*|^2\xi,\xi}\right] \\
     &=& \frac{\sqrt{\Delta\delta}}{\Delta+\delta}\seq{\left(|\B\A|^2+|(\D\C)^*|^2\right)\xi,\xi}.\nonumber
  \end{eqnarray}
  Combining (\ref{S2}) and (\ref{S3}), we obtain
  \begin{equation}\label{S4}
     |\seq{\D\C\B\A\xi,\xi}|\leq \frac{\sqrt{\Delta\delta}}{\Delta+\delta}\seq{\left(|\B\A|^2+|(\D\C)^*|^2\right)\xi,\xi}
  \end{equation}
  Taking the supremum over $\xi\in\h$ with $\norm{x}=1$, we infer that
  $$w(\D\C\B\A)\leq \frac{\sqrt{\Delta\delta}}{\Delta+\delta}\norm{|\B\A|^2+|(\D\C)^*|^2}.$$
  Now, since $f$ is a non-negative increasing convex function, we have
  \begin{eqnarray*}
    \psi(w(\D\C\B\A)) &\leq &\psi\left(\frac{2\sqrt{\Delta\delta}}{\Delta+\delta}\norm{\frac{|\B\A|^2+|(\D\C)^*|^2}{2}}\right)\\
     &\leq&\frac{2\sqrt{\Delta\delta}}{\Delta+\delta}\psi\left(\norm{\frac{|\B\A|^2+|(\D\C)^*|^2}{2}}\right)\\
     &&\,\,\,(\mbox{by Lemma \ref{Mohd2} because}\,\, \alpha=\frac{2\sqrt{\Delta\delta}}{\Delta+\delta}\leq 1)\\
     &\leq& \frac{2\sqrt{\Delta\delta}}{\Delta+\delta}\norm{\psi\left(\frac{|\B\A|^2+|(\D\C)^*|^2}{2}\right)} \\
    &\leq& \frac{\sqrt{\Delta\delta}}{\Delta+\delta}\norm{\psi\left(|\B\A|^2\right)+\psi\left(|(\D\C)^*|^2\right)}\,\,\,(\mbox{by Lemma \ref{yahya2}}).
  \end{eqnarray*}
\end{proof}
As an applications of Theorem \ref{Prof1}, we have
\begin{corollary}
   Let $\T\in\bh$, $\alpha,\beta\geq 0$ with $\alpha+\beta\geq 1$ and let $\psi$ be a non-negative increasing convex function on $[0,\infty)$.
   If
   $$0<|\T|^{2\beta}\leq \delta<\Delta\leq |\T^*|^{2\alpha}$$
   or
  $$0<|\T^*|^{2\alpha}\leq \delta<\Delta\leq |\T|^{2\beta},$$
  then
  \begin{equation}\label{S1}
    \psi(w(\T|\T|^{\beta-1}\T|\T|^{\alpha-1}))\leq \frac{\sqrt{\Delta\delta}}{\Delta+\delta}\norm{\psi\left(|\T|^{2\beta}\right)+\psi\left(|\T^*|^{2\alpha}\right)}.
  \end{equation}
\end{corollary}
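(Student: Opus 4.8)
The plan is to specialize Theorem~\ref{Prof1} by making the right substitution for the four operators $\A,\B,\C,\D$ so that the product $\D\C\B\A$ becomes $\T|\T|^{\beta-1}\T|\T|^{\alpha-1}$ and so that the quantities $|\B\A|^2$ and $|(\D\C)^*|^2$ collapse to the prescribed powers of $|\T|$ and $|\T^*|$. The natural choice is $\A=|\T|^{\alpha-1}$, $\B=\T$, $\C=|\T|^{\beta-1}$, $\D=\T$, which gives $\D\C\B\A = \T|\T|^{\beta-1}\T|\T|^{\alpha-1}$ exactly as in the statement. With this assignment we must identify $|\B\A|^2$ and $|(\D\C)^*|^2$.

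First I would compute $\B\A = \T|\T|^{\alpha-1}$. Using the polar decomposition $\T = U|\T|$ with $U$ a partial isometry, one has $\T|\T|^{\alpha-1} = U|\T|^{\alpha}$, hence $|\B\A|^2 = (B A)^*(BA) = |\T|^{\alpha}U^*U|\T|^{\alpha} = |\T|^{2\alpha}$ on the closure of the range of $|\T|$ (and $0$ elsewhere, which causes no difficulty since $|\T|^{2\alpha}$ vanishes there too when $\alpha>0$; if $\alpha=0$ one interprets the relevant operator appropriately). Wait --- I need to double-check the exponent bookkeeping against the corollary's statement: there $|\B\A|^2$ is required to satisfy $0<|\B\A|^2\leq\delta<\Delta\leq|\T^*|^{2\alpha}$ in the first alternative, so I actually want $|\B\A|^2 = |\T|^{2\beta}$ and $|(\D\C)^*|^2 = |\T^*|^{2\alpha}$. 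This forces instead $\A = |\T|^{\beta-1}$, $\B=\T$, $\C=|\T|^{\alpha-1}$, $\D=\T$, so that $\D\C\B\A = \T|\T|^{\alpha-1}\T|\T|^{\beta-1}$; up to the symmetry of the hypotheses (which are stated as an ``or'') and relabeling $\alpha\leftrightarrow\beta$, this matches the displayed conclusion $\psi(w(\T|\T|^{\beta-1}\T|\T|^{\alpha-1}))$. Then $\B\A = \T|\T|^{\beta-1} = U|\T|^{\beta}$ gives $|\B\A|^2 = |\T|^{2\beta}$, and $\D\C = \T|\T|^{\alpha-1} = U|\T|^{\alpha}$ gives $(\D\C)^* = |\T|^{\alpha}U^*$, so $|(\D\C)^*|^2 = (\D\C)(\D\C)^* = U|\T|^{2\alpha}U^* = |\T^*|^{2\alpha}$, using $U|\T|^2U^* = \T\T^* = |\T^*|^2$ and functional calculus. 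The condition $\alpha+\beta\geq 1$ is what keeps the maps $t\mapsto t^{\alpha-1}$ and $t\mapsto t^{\beta-1}$ from being simultaneously too singular --- more precisely, it is inherited from the ambient hypotheses in \cite{AA}-style results and guarantees the relevant operators are bounded; here it is simply carried along.

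Having made these identifications, the proof is then immediate: the hypotheses $0<|\T|^{2\beta}\leq\delta<\Delta\leq|\T^*|^{2\alpha}$ (resp.\ the reversed chain) are precisely $0<|\B\A|^2\leq\delta<\Delta\leq|(\D\C)^*|^2$ (resp.\ reversed), so Theorem~\ref{Prof1} applies verbatim with $\psi$ the given non-negative increasing convex function, yielding
\[
\psi\bigl(w(\D\C\B\A)\bigr)\leq \frac{\sqrt{\delta\Delta}}{\delta+\Delta}\norm{\psi\bigl(|\B\A|^2\bigr)+\psi\bigl(|(\D\C)^*|^2\bigr)},
\]
which upon substituting the computed values becomes exactly \eqref{S1}. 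The only real work, and the step I expect to need the most care, is the functional-calculus verification that $|\T|^{2\alpha}$ and $|\T^*|^{2\alpha}$ arise correctly from the partial-isometry manipulations --- in particular handling the behavior on $\ker|\T| = \ker\T$ so that the substitutions are legitimate as operator identities rather than merely identities on a dense subspace, and ensuring the positivity/order hypotheses are consistent with $\alpha,\beta$ possibly equal to $0$ or with $\T$ non-invertible. Once that bookkeeping is in place, the corollary is a one-line consequence of Theorem~\ref{Prof1}.
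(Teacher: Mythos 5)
Your overall strategy --- specializing Theorem \ref{Prof1} through the polar decomposition of $\T$ --- is the intended one (it is exactly how the paper handles the parallel Corollary \ref{maka1}), but your execution has two genuine problems. First, your substitution $\A=|\T|^{\beta-1}$, $\B=\T$, $\C=|\T|^{\alpha-1}$, $\D=\T$ is illegitimate when $\alpha<1$ or $\beta<1$ and $|\T|$ is not invertible: $|\T|^{\beta-1}$ is then an unbounded, only densely defined operator, whereas Theorem \ref{Prof1} requires $\A,\B,\C,\D\in\bh$. Under the first alternative of the hypothesis only $|\T^*|$ is forced to be bounded below, not $|\T|$, so this is not a corner case you can wave away. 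The paper's substitution avoids the issue entirely: writing $\T=U|\T|$ with $U$ a partial isometry, one takes $\D=U$, $\C=|\T|^{\beta}$, $\B=U$, $\A=|\T|^{\alpha}$, all bounded for $\alpha,\beta\geq 0$; the symbol $\T|\T|^{\beta-1}$ in the statement is to be read as the bounded operator $U|\T|^{\beta}$, not as a literal product with $|\T|^{\beta-1}$.

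Second, and more seriously, your ``relabel $\alpha\leftrightarrow\beta$'' step does not close the gap you correctly identified. The substitution that produces the operator $\T|\T|^{\beta-1}\T|\T|^{\alpha-1}=U|\T|^{\beta}U|\T|^{\alpha}$ gives $\B\A=U|\T|^{\alpha}$ and $\D\C=U|\T|^{\beta}$, hence $|\B\A|^2=|\T|^{2\alpha}$ and $|(\D\C)^*|^2=U|\T|^{2\beta}U^*=|\T^*|^{2\beta}$, so Theorem \ref{Prof1} yields a bound in terms of $\psi\left(|\T|^{2\alpha}\right)+\psi\left(|\T^*|^{2\beta}\right)$ under order hypotheses on $|\T|^{2\alpha}$ and $|\T^*|^{2\beta}$. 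Swapping the substitution, as you do, fixes the right-hand side but changes the operator inside $w(\cdot)$ to $\T|\T|^{\alpha-1}\T|\T|^{\beta-1}$, and relabelling $\alpha\leftrightarrow\beta$ afterwards undoes one repair while restoring the other defect: the operators $U|\T|^{\beta}U|\T|^{\alpha}$ and $U|\T|^{\alpha}U|\T|^{\beta}$ have different numerical radii in general, and the ``or'' in the hypothesis does not interchange them. What you have actually proved is the corollary with either the exponents in the hypothesis and right-hand side, or the exponents in the operator, transposed --- which strongly suggests the printed statement contains an $\alpha\leftrightarrow\beta$ typo (the same inconsistency appears between the stated conclusion and the displayed identities in the paper's own proof of Corollary \ref{maka1}). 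You should say so explicitly and prove the corrected statement, rather than asserting that a relabelling reconciles two genuinely different inequalities.
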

\begin{remark}\label{Rahma23} Following (\ref{S1}) we list here some particular inequalities of interest.
\begin{enumerate}
  \item [(i)] If we let $\psi(t)=t^r$ ($r\geq 1$), we have
  $$w^r(\D\C\B\A)\leq \frac{\sqrt{\Delta\delta}}{\Delta+\delta}\norm{|\B\A|^{2r}+|(\D\C)^*|^{2r}},$$
  whenever
  $$0<|\B\A|^2\leq \delta<\Delta\leq |(\D\C)^*|^2\quad\mbox{or}\quad 0<|(\D\C)^*|^2\leq \delta<\Delta\leq |\B\A|^2.$$
  \item [(ii)] Letting $\D=\S^*,\A=\T$ and let $\psi(t)=t^r$ ($r\geq 1$), we have
     $$w^r(\T^*\S)\leq \frac{\sqrt{\Delta\delta}}{\Delta+\delta}\norm{|\T|^{2r}+|\S|^{2r}},$$
     whenever
  $$0<|\T|^2\leq \delta<\Delta\leq |\S|^2\quad\mbox{or}\quad 0<|\S|^2\leq \delta<\Delta\leq |\T|^2.$$
  \item [(iii)] Letting $\C=\D=\B=I$ and $\A=\T$ and let $\psi(t)=t^r$ ($r\geq 1$), we have
  $$w^r(\T)\leq \frac{\sqrt{\Delta\delta}}{\Delta+\delta}\norm{|\T|^{2r}+I}, $$
  whenever
  $$0<|\T|^2\leq \delta<\Delta\leq MI\quad\mbox{or}\quad 0<I\leq \delta<\Delta\leq |\T|^2.$$
\end{enumerate}
\end{remark}
We give an example to clarify part (ii) in Remark \ref{Rahma23}
\begin{example} Let $\S=\begin{bmatrix}3/2 & 1/2 \\1/2& 3/2 \\ \end{bmatrix}$ and $\T=\begin{bmatrix}1/2 & 0 \\0 & 1/2 \\ \end{bmatrix}$ and $r=2$. A simple
calculations show that  $\S^*\T=\begin{pmatrix}\frac{3}{4}&\frac{1}{4}\\ \frac{1}{4}&\frac{3}{4}\end{pmatrix}$ and so $w^2(\S^*\T)=1$,
$\norm{|\S|^4+|\T|^4}=\frac{257}{16}=16.0625$. If we take $\delta=0.3$ and $\Delta=.4$, then
$$w^2(\S^*\T)=1\leq \frac{\sqrt{\Delta\delta}}{\Delta+\delta}\norm{|\S|^4+|\T|^4}=7.94$$
\end{example}
Recall that the weighted operator arithmetic mean $\nabla_{\nu}$ and geometric mean $\sharp_{\nu}$, for $0 <\nu < 1$,
positive invertible operator $\A$, and positive operator $\B$, are defined as follows:
$$\A\nabla_{\nu} \B=\nu \A+(1-\nu) \B\quad\mbox{and}\quad \A\sharp_{\nu}\B=\A^{\frac{1}{2}}(\A^{-\frac{1}{2}}\B\A^{-\frac{1}{2}})^{\nu}\A^{\frac{1}{2}}.$$
If $\nu=\frac{1}{2}$ we denote the arithmetic and geometric means, respectively, by $\nabla$ and $\sharp$.
\begin{theorem}\label{Boshra22}
  Let $\A,\B,\C,\D\in\bh$,  and let $f$ be a non-negative
increasing convex function on $[0,\infty)$. If for given $m',M' > 0$,
$$0<m'\leq |\B\A|^2\leq |(\D\C)^*|^2\leq M'\quad\mbox{or}\,\,0<m'\leq |(\D\C)^*|^2\leq |\B\A|^2\leq M',$$
then
$$h(w(\D\C\B\A))\leq \frac{1}{2\gamma}\norm{\psi\left(|\B\A|^2\right)+\psi\left(|(\D\C)^*|^2\right)},$$
where
$$\gamma:=\left(1-\frac{1}{8}\left(1-\frac{1}{h'}\right)^2\right)^{-1}\geq 1\,\,\,\mbox{with}\,\, h'=\frac{M'}{m'}.$$
\end{theorem}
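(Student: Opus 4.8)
The plan is to imitate, step by step, the proof of Theorem~\ref{Prof1}; the only structural change is that the Kantorovich-type constant $\tfrac{\sqrt{\delta\Delta}}{\delta+\Delta}$ supplied there by Lemma~\ref{Mohd1} must be replaced by the constant $\tfrac{1}{2\gamma}$, which should come from a sharper refinement of the scalar arithmetic--geometric mean inequality available precisely because the two quadratic forms involved are comparable and are trapped between $m'$ and $M'$.

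Concretely, I would first fix a unit vector $\xi\in\h$ and apply Lemma~\ref{D1} with $\zeta=\xi$ to get $\abs{\seq{\D\C\B\A\xi,\xi}}^2\le\seq{\abs{\B\A}^2\xi,\xi}\,\seq{\abs{(\D\C)^*}^2\xi,\xi}$. Writing $a=\seq{\abs{\B\A}^2\xi,\xi}$ and $b=\seq{\abs{(\D\C)^*}^2\xi,\xi}$, the operator hypothesis forces $m'\le a\le b\le M'$ (or $m'\le b\le a\le M'$), so $\max\{a,b\}/\min\{a,b\}\le h'=M'/m'$. The heart of the argument is then the scalar estimate
$$\sqrt{ab}\le\frac{1}{2\gamma}\,(a+b),\qquad\gamma=\left(1-\frac{1}{8}\left(1-\frac{1}{h'}\right)^{2}\right)^{-1},$$
for all such comparable $a,b$; granting it, $\abs{\seq{\D\C\B\A\xi,\xi}}\le\tfrac{1}{2\gamma}\seq{(\abs{\B\A}^2+\abs{(\D\C)^*}^2)\xi,\xi}$, and taking the supremum over unit $\xi$ yields $w(\D\C\B\A)\le\tfrac{1}{\gamma}\norm{\tfrac{1}{2}(\abs{\B\A}^2+\abs{(\D\C)^*}^2)}$. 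Since $\psi$ is non-negative, increasing and convex and $\tfrac{1}{\gamma}\le1$, I would conclude exactly as in Theorem~\ref{Prof1}: apply $\psi$ to both sides; pull the factor $\tfrac{1}{\gamma}$ outside by Lemma~\ref{Mohd2} with $\alpha=\tfrac1\gamma$; use the spectral identity $\norm{\psi(X)}=\psi(\norm{X})$ valid for positive $X$ and increasing $\psi$; and finally apply Lemma~\ref{yahya2} with $\mu=\tfrac12$ to obtain
$$\psi\bigl(w(\D\C\B\A)\bigr)\le\frac{1}{\gamma}\norm{\psi\Bigl(\tfrac{1}{2}\bigl(\abs{\B\A}^2+\abs{(\D\C)^*}^2\bigr)\Bigr)}\le\frac{1}{2\gamma}\norm{\psi(\abs{\B\A}^2)+\psi(\abs{(\D\C)^*}^2)},$$
which is the asserted inequality.

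The step I expect to be the main obstacle is the scalar refinement $\sqrt{ab}\le\tfrac{1}{2\gamma}(a+b)$ with exactly that $\gamma=\gamma(h')$: it plays, for these hypotheses, the role that Lemma~\ref{Mohd1} plays in Theorem~\ref{Prof1}, but it is delicate precisely because any refinement of this shape must degenerate to the plain inequality $\sqrt{ab}\le\tfrac{a+b}{2}$ when $a=b$. I would look for a Specht-ratio / Kantorovich-type estimate, most plausibly obtained by the Mond--Pe\v{c}ari\'{c} method applied to the concave function $t\mapsto\sqrt t$ on $[m',M']$ together with the ordering hypothesis; pinning down which constant is genuinely attainable, and checking that the resulting bound is symmetric in $\abs{\B\A}^2$ and $\abs{(\D\C)^*}^2$ under the two alternative hypotheses, is where the real work lies. (Note also that Lemma~\ref{Mohd2}, hence this argument, tacitly uses $\psi(0)=0$.)
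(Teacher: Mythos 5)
Your plan coincides with the paper's own proof almost line for line: the paper likewise reduces, via Lemma~\ref{D1}, to the scalar estimate $\gamma\sqrt{\sigma\tau}\le\frac{\sigma+\tau}{2}$ for $m'\le\min\{\sigma,\tau\}\le\max\{\sigma,\tau\}\le M'$ and then repeats the argument of Theorem~\ref{Prof1} with $\frac{1}{2\gamma}$ in place of $\frac{\sqrt{\delta\Delta}}{\delta+\Delta}$. The one ingredient you leave open is supplied in the paper by Corollary~\ref{Boshra33} (a Furuta-type reverse of the weighted arithmetic--geometric mean inequality): taking $\nu=\tfrac12$ and $r=-1$ in $exp_r(\xi)=(1+r\xi)^{1/r}$ produces exactly $\gamma=\bigl(1-\tfrac18\bigl(1-\tfrac{1}{h'}\bigr)^2\bigr)^{-1}$, so no Mond--Pe\v{c}ari\'{c} argument is needed; the rest of your derivation (Lemma~\ref{Mohd2} with $\alpha=\tfrac{1}{\gamma}$, the identity $\psi(\norm{X})=\norm{\psi(X)}$ for positive $X$, and Lemma~\ref{yahya2} with $\mu=\tfrac12$) is precisely what the paper does.

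That said, the degeneration at $\sigma=\tau$ that you flag as ``delicate'' is in fact a genuine gap --- in the paper's proof as much as in yours. For $\sigma=\tau$ the inequality $\gamma\sqrt{\sigma\tau}\le\frac{\sigma+\tau}{2}$ forces $\gamma\le 1$, whereas $\gamma>1$ whenever $h'>1$. Corollary~\ref{Boshra33} escapes this because its hypothesis $0<m\le\T\le m'<M'\le\S\le M$ builds in a spectral \emph{separation} between the two operators, so the quantities $\seq{\T\xi,\xi}$ and $\seq{\S\xi,\xi}$ can never coincide; but the hypothesis of Theorem~\ref{Boshra22}, namely $m'\le|\B\A|^2\le|(\D\C)^*|^2\le M'$, uses $m'$ and $M'$ only as outer bounds and permits $\seq{|\B\A|^2\xi,\xi}=\seq{|(\D\C)^*|^2\xi,\xi}$ (indeed the two operators may be equal), in which case the scalar step fails. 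Either the hypothesis must be strengthened to a separation condition of the form $|\B\A|^2\le m'<M'\le|(\D\C)^*|^2$, or the constant must be weakened to one depending on the gap actually guaranteed. Your proposal inherits this defect from the statement itself, but you correctly located where it lives; you should also make explicit, as you note in passing, that Lemma~\ref{Mohd2} requires $\psi(0)=0$.
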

To prove Theorem \ref{Boshra22}, we need the following result that established by  \cite{Furu}.
\begin{corollary}\label{Boshra33}
  Let $\nu\in [0,1]$, $r_1\in [-1,0)$, $r_2\in (0,1]$ and let $\T$ and $\S$ be strictly positive
operators satisfying (i) $0 < m \leq \T \leq m' < M' \leq \S \leq M$ or (ii) $0 < m \leq \S \leq m' < M' \leq \T \leq M$
 with $h=\frac{M}{m}$ and $h'=\frac{M'}{m'}$. Then
 $$exp_{r_1}\left(\frac{\nu(1-\nu)}{2}\left(\frac{h-1}{h}\right)^2\right)\T\sharp_{\nu}\S
 \leq \T\nabla_{\nu}\S\leq exp_{r_2}\left(\frac{\nu(1-\nu)}{2}\left(h'-1\right)^2\right)\T\sharp_{\nu}\S. $$
\end{corollary}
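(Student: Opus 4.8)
The plan is to run the same three-stage argument used for Theorem \ref{Prof1}, but with the elementary arithmetic--geometric mean step upgraded to the \emph{refined} Young inequality of Corollary \ref{Boshra33}; this is exactly what produces the sharpened constant $\frac{1}{2\gamma}$ in place of $\frac{\sqrt{\delta\Delta}}{\delta+\Delta}$. I read the convex function appearing as ``$f$'' and ``$h$'' in the statement as the single function $\psi$ named on the right-hand side. First I would fix a unit vector $\xi\in\h$ and apply the mixed Schwarz inequality of Lemma \ref{D1} to get
$$|\seq{\D\C\B\A\xi,\xi}|\le \sqrt{\seq{|\B\A|^2\xi,\xi}\,\seq{|(\D\C)^*|^2\xi,\xi}}.$$
Abbreviating $a=\seq{|\B\A|^2\xi,\xi}$ and $b=\seq{|(\D\C)^*|^2\xi,\xi}$, the operator bounds in the hypothesis place $a,b$ in $[m',M']$, so their ratio is controlled by $h'=M'/m'$. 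The core of the proof is then the scalar refined estimate $\gamma\sqrt{ab}\le\frac{a+b}{2}$, equivalently $\sqrt{ab}\le\frac{1}{2\gamma}(a+b)$, which I would extract from Corollary \ref{Boshra33} by taking $\nu=\frac{1}{2}$, $r_1=-1$, and the scalar operators $\T=a$, $\S=b$: with these choices $\T\sharp_{\nu}\S=\sqrt{ab}$, $\T\nabla_{\nu}\S=\frac{a+b}{2}$, and $\exp_{-1}\!\left(\frac{1}{8}\bigl(1-\frac{1}{h'}\bigr)^2\right)=\bigl(1-\frac{1}{8}(1-\frac{1}{h'})^2\bigr)^{-1}=\gamma$, matching the constant in the statement exactly.

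Combining the two displays and taking the supremum over unit vectors $\xi$ yields the norm-level bound
$$w(\D\C\B\A)\le \frac{1}{2\gamma}\norm{|\B\A|^2+|(\D\C)^*|^2}.$$
From here the passage to $\psi$ is verbatim the end of the proof of Theorem \ref{Prof1}. I would rewrite the right-hand side as $\frac{1}{\gamma}\norm{\frac{1}{2}(|\B\A|^2+|(\D\C)^*|^2)}$, observe that $\alpha:=\frac{1}{\gamma}\le 1$, and apply $\psi(\alpha t)\le\alpha\,\psi(t)$ from Lemma \ref{Mohd2}. Using $\psi(\norm{X})=\norm{\psi(X)}$ for positive $X$ (monotonicity of $\psi$ plus the spectral-radius formula) and then Lemma \ref{yahya2} with $\mu=\frac{1}{2}$ to push $\psi$ through the arithmetic mean, one arrives at $\frac{1}{\gamma}\norm{\frac{1}{2}(\psi(|\B\A|^2)+\psi(|(\D\C)^*|^2))}=\frac{1}{2\gamma}\norm{\psi(|\B\A|^2)+\psi(|(\D\C)^*|^2)}$, which is the claim. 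None of these are more than bookkeeping once the norm-level bound is in hand.

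The step I expect to require genuine care is the invocation of Corollary \ref{Boshra33}. That corollary yields the strengthening factor $\gamma>1$ only under an \emph{order separation} of the two operators (its cases (i) and (ii)), whereas the hypothesis here merely orders $|\B\A|^2$ and $|(\D\C)^*|^2$ within a common interval $[m',M']$ and permits them to coincide; since $\gamma\sqrt{ab}\le\frac{a+b}{2}$ fails when $a=b$, I would need to verify that the admissible pairs are in fact separated and that the scalar refinement holds for all of them with $b/a\le h'$. I would also reconcile the fact that the factor in Corollary \ref{Boshra33} is formed from the ambient ratio $h=M/m$ while $\gamma$ is formed from $h'=M'/m'$, presumably by specialising the outer bounds to $m=m'$, $M=M'$. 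Pinning down this separation condition and confirming the resulting constant is genuinely attainable is where the real obstacle—and any hidden hypothesis—lies.
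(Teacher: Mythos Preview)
Your proposal does not address the stated result. Corollary \ref{Boshra33} is a refined operator Young inequality (two-sided bounds on $\T\nabla_\nu\S$ in terms of $\T\sharp_\nu\S$), and the paper does not prove it at all: it is quoted from \cite{Furu} as an auxiliary tool. What you have actually written is a proof of Theorem \ref{Boshra22}, \emph{using} Corollary \ref{Boshra33} as an ingredient.

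Read as a proof of Theorem \ref{Boshra22}, your argument is essentially identical to the paper's. The paper's proof also specialises Corollary \ref{Boshra33} at $\nu=\tfrac12$, $r_1=-1$ to extract the scalar inequality $\gamma\sqrt{\sigma\tau}\le\frac{\sigma+\tau}{2}$, and then simply says ``applying this inequality with a similar argument as in Theorem \ref{Prof1}, we obtain the desired result.'' Your write-up is just an explicit expansion of that sentence, via Lemma \ref{D1}, supremum, Lemma \ref{Mohd2}, and Lemma \ref{yahya2}.

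Your closing reservations are well taken and apply equally to the paper's own argument. Corollary \ref{Boshra33} assumes a strict order \emph{separation} $\T\le m'<M'\le\S$ (or the swap), whereas the hypothesis of Theorem \ref{Boshra22} only asserts $m'\le|\B\A|^2\le|(\D\C)^*|^2\le M'$ with no gap; moreover the left-hand constant in Corollary \ref{Boshra33} is built from $h=M/m$, not from $h'=M'/m'$ as $\gamma$ is. The paper's proof silently replaces the hypothesis by ``$0<m'\le\T\le\S\le M'$'' and uses $h'$ without comment, so the mismatch you flag is already present in the source and is not an artefact of your approach.
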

\begin{proof}[Proof of Theorem \ref{Boshra22}] From Corollary \ref{Boshra33}, we have
  $$exp_r\left(\frac{\nu(1-\nu)}{2}\left(1-\frac{1}{h'}\right)^2\right)\T\sharp_{\nu}\S\leq \T\nabla_{\nu} \S$$
  for $\T,\S> 0$ with $m',M' > 0$  satisfying $0 < m' \leq \T \leq \S \leq M'$ or $0 < m' \leq \S \leq \T \leq M'$,
where $exp_r (\xi) := (1 + r\xi)^{1/r}$, if $1 + r\xi > 0$, and it is undefined otherwise. Since $exp_r(\xi)$ is
decreasing in $-1\leq r<0$, the above inequality gives a tight lower bound when $r$ equal $-1$. After
all, we have the scalar inequality:
$$\gamma \sqrt{\sigma\tau}\leq \frac{\sigma+\tau}{2}$$
for $\sigma, \tau > 0$ and $m',M' > 0$ such that $0 < m' \leq \min\{\sigma, \tau \} \leq \max\{\sigma, \tau \} \leq M'$. Applying this
inequality with a similar argument as in Theorem \ref{Prof1}, we obtain the desired result.
\end{proof}
\begin{theorem}\label{mohmmm}
   Let $\A,\B,\C,\D\in\bh$ $0<\nu<1$ and let $\psi$ be a non-negative increasing convex function on $[0,\infty)$. Then
   \begin{equation}\label{mohm1}
     \psi(w^2(\D\C\B\A))\leq \norm{(1-\nu) \psi\left(|\B\A|^{\frac{2}{1-\nu}}\right)+\nu \psi\left(|(\D\C)^*|^{\frac{2}{\nu}}\right)}-r\gamma(\psi)
   \end{equation}
   where $r=\min\{\nu,1-\nu\}$ and
   \begin{eqnarray}\label{mohm2}
     \gamma(\psi)&=&\inf_{\norm{\xi}=1}\left\{\psi\left(\seq{|\B\A|^{\frac{2}{1-\nu}}\xi,\xi}\right)
     +\psi\left(\seq{|(\D\C)^*|^{\frac{2}{\nu}}\xi,\xi}\right)\right.\nonumber \\
      &-&\left. 2\psi\left(\seq{\left(\frac{\seq{|\B\A|^{\frac{2}{1-\nu}}\xi,\xi}+\seq{|(\D\C)^*|^{\frac{2}{\nu}}\xi,\xi}}{2}\right)\xi,\xi}\right) \right\}.
   \end{eqnarray}
\end{theorem}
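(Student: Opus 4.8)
The plan is to argue pointwise over unit vectors $\xi\in\h$: start from the Schwarz-type bound of Lemma~\ref{D1}, pass through the weighted arithmetic--geometric mean inequality, and quantify the gap in the convexity step that follows by means of a refined convexity estimate for $\psi$. The one scalar ingredient I would first record is this refinement of convexity: for $\psi$ convex on $[0,\infty)$, $0<\nu<1$ and $r=\min\{\nu,1-\nu\}$, and all $x,y\ge 0$,
\[
(1-\nu)\psi(x)+\nu\psi(y)-\psi\big((1-\nu)x+\nu y\big)\ \ge\ r\Big(\psi(x)+\psi(y)-2\psi\big(\tfrac{x+y}{2}\big)\Big).
\]
It follows from convexity alone: when $\nu\le\tfrac12$ (so $r=\nu$), cancelling $\nu\psi(y)$ from both sides reduces the claim to $(1-2\nu)\psi(x)+2\nu\,\psi(\tfrac{x+y}{2})\ge\psi((1-\nu)x+\nu y)$, which is Jensen's inequality for the convex combination $(1-2\nu)\,x+2\nu\cdot\tfrac{x+y}{2}=(1-\nu)x+\nu y$; the case $\nu\ge\tfrac12$ is symmetric (cancel $(1-\nu)\psi(x)$, use $r=1-\nu$).

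Then, for a fixed unit vector $\xi$, I would set $a=\seq{|\B\A|^{\frac{2}{1-\nu}}\xi,\xi}$ and $b=\seq{|(\D\C)^*|^{\frac{2}{\nu}}\xi,\xi}$. Lemma~\ref{D1} gives $|\seq{\D\C\B\A\xi,\xi}|^2\le\seq{|\B\A|^2\xi,\xi}\,\seq{|(\D\C)^*|^2\xi,\xi}$; applying H\"older--McCarthy (Lemma~\ref{Holder}(ii)) with exponent $1-\nu$ to $|\B\A|^{\frac{2}{1-\nu}}$ and with exponent $\nu$ to $|(\D\C)^*|^{\frac{2}{\nu}}$ gives $\seq{|\B\A|^2\xi,\xi}\le a^{1-\nu}$ and $\seq{|(\D\C)^*|^2\xi,\xi}\le b^{\nu}$, hence $|\seq{\D\C\B\A\xi,\xi}|^2\le a^{1-\nu}b^{\nu}$. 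Since $a^{1-\nu}b^{\nu}\le(1-\nu)a+\nu b$ by weighted AM--GM and $\psi$ is increasing, $\psi(|\seq{\D\C\B\A\xi,\xi}|^2)\le\psi((1-\nu)a+\nu b)$, and the scalar refinement above yields
\[
\psi\big(|\seq{\D\C\B\A\xi,\xi}|^2\big)\ \le\ (1-\nu)\psi(a)+\nu\psi(b)-r\Big(\psi(a)+\psi(b)-2\psi\big(\tfrac{a+b}{2}\big)\Big).
\]

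Next I would bound the two pieces on the right, uniformly in $\xi$. By Mond--Pe\v{c}ari\'c (Lemma~\ref{yahya1}), $\psi(a)\le\seq{\psi(|\B\A|^{\frac{2}{1-\nu}})\xi,\xi}$ and $\psi(b)\le\seq{\psi(|(\D\C)^*|^{\frac{2}{\nu}})\xi,\xi}$, so $(1-\nu)\psi(a)+\nu\psi(b)\le\seq{\big[(1-\nu)\psi(|\B\A|^{\frac{2}{1-\nu}})+\nu\psi(|(\D\C)^*|^{\frac{2}{\nu}})\big]\xi,\xi}\le\norm{(1-\nu)\psi(|\B\A|^{\frac{2}{1-\nu}})+\nu\psi(|(\D\C)^*|^{\frac{2}{\nu}})}$, the last step because that operator is positive and $\norm{\xi}=1$. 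As for the subtracted term, $\seq{(\tfrac{a+b}{2})\xi,\xi}=\tfrac{a+b}{2}$ for $\norm{\xi}=1$, so $\psi(a)+\psi(b)-2\psi(\tfrac{a+b}{2})$ is precisely the expression whose infimum over unit vectors defines $\gamma(\psi)$; hence it is $\ge\gamma(\psi)$, and the subtracted term is $\le-r\gamma(\psi)$. Combining, $\psi(|\seq{\D\C\B\A\xi,\xi}|^2)\le\norm{(1-\nu)\psi(|\B\A|^{\frac{2}{1-\nu}})+\nu\psi(|(\D\C)^*|^{\frac{2}{\nu}})}-r\gamma(\psi)$ for every unit $\xi$; taking the supremum over unit vectors, and using that a non-negative increasing convex function on $[0,\infty)$ is continuous so that $\sup_{\norm{\xi}=1}\psi(|\seq{\D\C\B\A\xi,\xi}|^2)=\psi(w^2(\D\C\B\A))$, gives (\ref{mohm1}).

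The one delicate point, and the place I would be most careful, is the scalar refinement inequality together with the consistent assignment of weights: the exponent/weight $1-\nu$ has to accompany the $|\B\A|$-term and $\nu$ the $|(\D\C)^*|$-term at every stage (H\"older--McCarthy, AM--GM, the refinement, Mond--Pe\v{c}ari\'c), and $r=\min\{\nu,1-\nu\}$ is exactly the constant produced by the refinement step. Everything else --- in particular pulling the supremum inside $\psi$ at the end --- is routine.
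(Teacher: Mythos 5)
Your proof is correct and follows essentially the same route as the paper: the same Schwarz bound (Lemma~\ref{D1}), the same H\"older--McCarthy/AM--GM step, the identical convexity trick of writing $(1-\nu)x+\nu y=(1-2\nu)x+2\nu\cdot\frac{x+y}{2}$ to extract the constant $r=\min\{\nu,1-\nu\}$, and Mond--Pe\v{c}ari\'c at the end. The only (cosmetic) difference is that you run the argument pointwise on $|\seq{\D\C\B\A\xi,\xi}|^2$ and isolate the scalar convexity refinement as a standalone inequality, whereas the paper first proves the corresponding norm inequality and then composes it with the bound $w^2(\D\C\B\A)\leq\norm{(1-\nu)|\B\A|^{\frac{2}{1-\nu}}+\nu|(\D\C)^*|^{\frac{2}{\nu}}}$ from Theorem~\ref{Rahma1}.
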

\begin{proof}
  We assume $0\leq \nu\leq \frac{1}{2}$. For each unit vector $\xi\in\h$,
  \begin{eqnarray*}
     && \psi\left(\seq{\left((1-\nu)|\B\A|^{\frac{2}{1-\nu}}+\nu |(\D\C)^*|^{\frac{2}{\nu}}\right)\xi,\xi}\right)+r\gamma(\psi) \\
     &=&\psi\left((1-\nu)\seq{|\B\A|^{\frac{2}{1-\nu}}\xi,\xi}+\nu\seq{|(\D\C)^*|^{\frac{2}{\nu}}\xi,\xi}\right)+r\gamma(\psi) \\
     &=& \psi\left((1-2\nu)\seq{|\B\A|^{\frac{2}{1-\nu}}\xi,\xi}+2\nu\seq{\left(\frac{|\B\A|^{\frac{2}
     {1-\nu}}+|(\D\C)^*|^{\frac{2}{\nu}}}{2}\right)\xi,\xi}\right)+r\gamma(\psi) \\
     &\leq&(1-2\nu)\psi\left(\seq{|\B\A|^{\frac{2}{1-\nu}}\xi,\xi}\right)+2\nu \psi\left(\seq{\left(\frac{|\B\A|^{\frac{2}{1-\nu}}+|(\D\C)^*|^{\frac{2}{\nu}}}{2}\right)\xi,\xi}\right) +r\gamma(\psi)\\
     &&(\mbox{by convexity of $\psi$}).
      \end{eqnarray*}
     Hence
     \begin{eqnarray*}
     && \psi\left(\seq{\left((1-\nu)|\B\A|^{\frac{2}{1-\nu}}+\nu |(\D\C)^*|^{\frac{2}{\nu}}\right)\xi,\xi}\right)+r\gamma(\psi) \\
     &\leq&(1-2\nu)\psi\left(\seq{|\B\A|^{\frac{2}{1-\nu}}\xi,\xi}\right)+2\nu \psi\left(\seq{\left(\frac{|\B\A|^{\frac{2}{1-\nu}}+|(\D\C)^*|^{\frac{2}{\nu}}}{2}\right)\xi,\xi}\right)  \\
     &+&  r\left(\psi\left(\seq{|\B\A|^{\frac{2}{1-\nu}}\xi,\xi}\right)+\psi\left(\seq{|(\D\C)^*|^{\frac{2}{\nu}}\xi,\xi}\right)\right.\\
   &-&\left. 2\psi\left(\seq{\left(\frac{|\B\A|^{\frac{2}{1-\nu}}
     +|(\D\C)^*|^{\frac{2}{\nu}}}{2}\right)\xi,\xi}\right)\right)\\
     &&\quad(\mbox{by inequality \ref{mohm2}})\\
      &\leq& (1-\nu)\psi\left(\seq{|\B\A|^{\frac{2}{1-\nu}}\xi,\xi}\right)+\nu \psi\left(\seq{|(\D\C)^*|^{\frac{2}{\nu}}\xi,\xi}\right)\\
     &\leq& \seq{\left((1-\nu)\psi\left(|\B\A|^{\frac{2}{1-\nu}}\right)+\nu \psi\left(|(\D\C)^*|^{\frac{2}{\nu}}\right)\right)\xi,\xi}\quad(\mbox{by Lemma \ref{yahya1}}).
  \end{eqnarray*}
  If we apply similar arguments for $\frac{1}{2}\leq \nu\leq 1$, then we can write
  \begin{eqnarray*}
   \psi\left(\seq{\left((1-\nu)|\B\A|^{\frac{2}{1-\nu}}+\nu |(\D\C)^*|^{\frac{2}{\nu}}\right)\xi,\xi}\right) &\leq& \left\|\left((1-\nu)\psi\left(|\B\A|^{\frac{2}{1-\nu}}\right)\right.\right. \\
     &+&\left.\left. \nu \psi\left(|(\D\C)^*|^{\frac{2}{\nu}}\right)\right)\xi,\xi\right\|-r\gamma(\psi).
  \end{eqnarray*}
  We know that if $\T\in\bh$ is a positive operator, then $\norm{\T}=\sup_{\norm{\xi}=1}\seq{\T\xi,\xi}$. By using this,
the continuity and the increase of $\psi$, we have
\begin{eqnarray*}
&&\psi\left(\norm{(1-\nu)|\B\A|^{\frac{2}{1-\nu}}+\nu |(\D\C)^*|^{\frac{2}{\nu}}}\right) \\
  &=&\psi\left(\sup_{\norm{x}=1}\seq{\left((1-\nu)|\B\A|^{\frac{2}{1-\nu}}+\nu |(\D\C)^*|^{\frac{2}{\nu}}\right)\xi,\xi}\right) \\
   &=&\sup_{\norm{x}=1}\psi\left(\seq{\left((1-\nu)|\B\A|^{\frac{2}{1-\nu}}+\nu |(\D\C)^*|^{\frac{2}{\nu}}\right)\xi,\xi}\right) \\
   &\leq& \sup_{\norm{x}=1}\seq{\left((1-\nu)\psi\left(|\B\A|^{\frac{2}{1-\nu}}\right)+\nu \psi\left(|(\D\C)^*|^{\frac{2}{\nu}}\right)\right)\xi,\xi}-r\gamma(\psi) \\
   &=&\norm{(1-\nu)\psi\left(|\B\A|^{\frac{2}{1-\nu}}\right)+\nu \psi\left(|(\D\C)^*|^{\frac{2}{\nu}}\right)}-r\gamma(\psi)
\end{eqnarray*}
On the other hand, if $X\in\bh$, and if $\psi$ is a non-negative increasing function on $[0,\infty)$, then
$\psi(\norm{\X})=\norm{\psi(|\X|)}$.\\
Now from the proof of Theorem \ref{Rahma1}, we have
\begin{eqnarray*}
    \psi(w^2(\D\C\B\A)) &\leq &\psi\left(\norm{(1-\nu)|\B\A|^{\frac{2}{1-\nu}}+\nu |(\D\C)^*|^{\frac{2}{\nu}}}\right) \\
     &\leq& \norm{(1-\nu) \psi\left(|\B\A|^{\frac{2}{1-\nu}}\right)+\nu \psi\left(|(\D\C)^*|^{\frac{2}{\nu}}\right)}-r\gamma(\psi).
  \end{eqnarray*}
  This completes the proof.
\end{proof}
Inequality (\ref{mohm1}) includes several numerical radius inequalities as special cases.
\begin{corollary}\label{maka1}
   Let $\T\in\bh$, $\alpha+\beta\geq 1$, $0<\nu<1$ and let $\psi$ be a non-negative increasing convex function on $[0,\infty)$. Then
   \begin{equation}\label{result1}
     \psi(w^2(\T|\T|^{\beta-1}\T|\T|^{\alpha-1}))\leq \norm{(1-\nu) \psi\left(|\T|^{\frac{2\beta}{1-\nu}}\right)+\nu \psi\left(|\T^*|^{\frac{2\alpha}{\nu}}\right)}-r\gamma(\psi)
   \end{equation}
   where $r$ and $\gamma(\psi)$ as in Theorem \ref{mohmmm}.
\end{corollary}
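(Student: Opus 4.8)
The plan is to obtain Corollary~\ref{maka1} as a direct specialization of Theorem~\ref{mohmmm}: the only work is to choose $\A,\B,\C,\D\in\bh$ so that $\D\C\B\A$ becomes $\T|\T|^{\beta-1}\T|\T|^{\alpha-1}$ while $|\B\A|$ and $|(\D\C)^{*}|$ collapse to powers of $|\T|$ and $|\T^{*}|$. Fix the polar decomposition $\T=U|\T|$; for $\gamma\ge0$ the symbol $\T|\T|^{\gamma-1}$ is to be read as $U|\T|^{\gamma}\in\bh$, which is the natural reading when $\T$ is not invertible. I would then take
$$\A=|\T|^{\alpha/2},\qquad \B=U|\T|^{\alpha/2},\qquad \C=|\T|^{\beta/2},\qquad \D=U|\T|^{\beta/2},$$
all of which lie in $\bh$ since $\alpha,\beta\ge0$. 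With this choice $\B\A=U|\T|^{\alpha}=\T|\T|^{\alpha-1}$ and $\D\C=U|\T|^{\beta}=\T|\T|^{\beta-1}$, hence $\D\C\B\A=\T|\T|^{\beta-1}\T|\T|^{\alpha-1}$.

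The computational core is the evaluation of the two absolute values that enter \eqref{mohm1}. For the first, since $U^{*}U$ is the orthogonal projection onto $\overline{\mathrm{ran}}\,|\T|$ and $|\T|^{\alpha}$ already maps into that subspace,
$$|\B\A|^{2}=(\B\A)^{*}\B\A=|\T|^{\alpha}(U^{*}U)|\T|^{\alpha}=|\T|^{2\alpha},$$
so $|\B\A|^{2/(1-\nu)}=|\T|^{2\alpha/(1-\nu)}$ by the continuous functional calculus. For the second,
$$|(\D\C)^{*}|^{2}=(\D\C)(\D\C)^{*}=U|\T|^{2\beta}U^{*},$$
which I would combine with $|\T^{*}|^{2}=\T\T^{*}=U|\T|^{2}U^{*}$ to get $U|\T|^{2\beta}U^{*}=|\T^{*}|^{2\beta}$, so $|(\D\C)^{*}|^{2/\nu}=|\T^{*}|^{2\beta/\nu}$. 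Substituting these two identities into \eqref{mohm1} and \eqref{mohm2} of Theorem~\ref{mohmmm}, with $r=\min\{\nu,1-\nu\}$, delivers \eqref{result1} together with the asserted form of $\gamma(\psi)$ (up to the obvious interchange of the labels $\alpha$ and $\beta$, which is fixed by passing instead to the mirror choice $\A=|\T|^{\beta/2},\,\B=U|\T|^{\beta/2},\,\C=|\T|^{\alpha/2},\,\D=U|\T|^{\alpha/2}$). The hypothesis $\alpha+\beta\ge1$ is inherited from the surrounding corollaries and is used here only to keep the exponents nonnegative.

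The single point that needs care is the identity $U|\T|^{2\beta}U^{*}=|\T^{*}|^{2\beta}$: $U$ is only a partial isometry, with initial space $\overline{\mathrm{ran}}\,|\T|$ and final space $\overline{\mathrm{ran}}\,|\T^{*}|$, so the clean argument restricts everything to those subspaces---on which $U$ acts as a unitary and hence commutes with the continuous functional calculus---and observes that both sides vanish on the respective orthogonal complements. Apart from this, the proof is nothing more than the specialization of Theorem~\ref{mohmmm} (and of the Schwarz-type bound of Lemma~\ref{D1} on which it rests), which is exactly why the statement is recorded as a corollary.
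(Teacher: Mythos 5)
Your proof is correct and follows essentially the same route as the paper: the paper also specializes Theorem \ref{mohmmm} via the polar decomposition, taking $\D=U$, $\C=|\T|^{\beta}$, $\B=U$, $\A=|\T|^{\alpha}$, so that $\B\A=U|\T|^{\alpha}$ and $\D\C=U|\T|^{\beta}$ exactly as in your (trivially different) factorization, and your verification of $U|\T|^{2\beta}U^{*}=|\T^{*}|^{2\beta}$ only makes explicit what the paper asserts without proof. The $\alpha\leftrightarrow\beta$ mismatch you flag is genuinely present in the paper: its own substitution gives $|\B\A|^{2}=|\T|^{2\alpha}$ and $|(\D\C)^{*}|^{2}=|\T^{*}|^{2\beta}$, hence $|\T|^{2\alpha/(1-\nu)}$ and $|\T^{*}|^{2\beta/\nu}$ on the right-hand side, whereas the stated corollary has the exponents interchanged, so either the statement or the choice of $\A,\B,\C,\D$ needs the labels swapped.
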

\begin{proof} Let $\T = U|\T|$ be the polar decomposition of the operator $\T$, where $U$ is partial isometry and the kernel
$\ker(U) = N(|\T|)$. If we take $\D=U, \C=|\T|^{\beta}, \B=U$ and $\A=|\T|^{\alpha}$, we have
$$\D\C\B\A=\T|\T|^{\beta-1}\T|\T|^{\alpha-1}, |\B\A|^2=|\T|^{2\alpha}\quad\mbox{and}\quad |(\D\C)^*|^2=|\T^*|^{2\beta}.$$
So, the result follows by Theorem \ref{mohmmm}.
\end{proof}
\begin{corollary}\label{maka2}
   Let $\T\in\bh$, $\alpha,\beta\geq 0$ such that $\alpha+\beta\geq 2$, $0<\nu<1$ and let $\psi$ be a non-negative increasing convex function on $[0,\infty)$. Then
   \begin{equation}\label{result2}
     \psi(w^2(\T^*|\T^*|^{\alpha+\beta-2}\T))\leq \norm{(1-\nu) \psi\left(|\T|^{\frac{2\beta}{1-\nu}}\right)+\nu \psi\left(|\T|^{\frac{2\alpha}{\nu}}\right)}-r\gamma(\psi)
   \end{equation}
   where  $r$ and $\gamma(\psi)$ as in Theorem \ref{mohmmm}.
\end{corollary}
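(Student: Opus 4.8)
The plan is to reduce Corollary~\ref{maka2} to Theorem~\ref{mohmmm} by the polar--decomposition device already used in Corollary~\ref{maka1}, the only new ingredient being a simplification of the operator appearing on the left--hand side of (\ref{result2}).

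First I would record the polar decomposition $\T=U\abs{\T}$, where $U$ is a partial isometry with $U^*U$ equal to the orthogonal projection onto $\overline{\operatorname{ran}}\abs{\T}$, so that $\abs{\T}U^*U=\abs{\T}$ and, since $U\abs{\T}U^*\geq 0$ with $(U\abs{\T}U^*)^n=U\abs{\T}^nU^*$ for $n\geq 1$, the continuous functional calculus gives $\abs{\T^*}^s=U\abs{\T}^sU^*$ for every $s\geq 0$. Because the hypothesis $\alpha+\beta\geq 2$ makes $s=\alpha+\beta-2\geq 0$ a legitimate (nonnegative) exponent, we obtain
\[
  \T^*\abs{\T^*}^{\alpha+\beta-2}\T=\abs{\T}U^*\,U\abs{\T}^{\alpha+\beta-2}U^*\,U\abs{\T}=\abs{\T}^{\alpha+\beta},
\]
the case $\alpha+\beta=2$ being just $\T^*\T=\abs{\T}^2$.

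Next I would apply Theorem~\ref{mohmmm} to the factorization $\D=I$, $\C=\abs{\T}^{\alpha}$, $\B=I$, $\A=\abs{\T}^{\beta}$ (all bounded positive operators, since $\alpha,\beta\geq 0$). With this choice $\D\C\B\A=\abs{\T}^{\alpha+\beta}=\T^*\abs{\T^*}^{\alpha+\beta-2}\T$ by the previous step, while $\abs{\B\A}^2=\abs{\T}^{2\beta}$ and $\abs{(\D\C)^*}^2=\abs{\T}^{2\alpha}$. Substituting $\abs{\B\A}^{2/(1-\nu)}=\abs{\T}^{2\beta/(1-\nu)}$ and $\abs{(\D\C)^*}^{2/\nu}=\abs{\T}^{2\alpha/\nu}$ into (\ref{mohm1}) and into the defining formula (\ref{mohm2}) for $\gamma(\psi)$ turns (\ref{mohm1}) into exactly (\ref{result2}), with $r=\min\{\nu,1-\nu\}$ as claimed.

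I do not anticipate a genuine obstacle: the argument is essentially bookkeeping once Theorem~\ref{mohmmm} is in hand. The one step deserving a little care is the operator identity in the second paragraph --- justifying $\abs{\T^*}^s=U\abs{\T}^sU^*$ through the continuous functional calculus (using that $t\mapsto t^s$ vanishes at $0$) and checking the boundary exponents $\alpha+\beta=2$, $\alpha=0$, $\beta=0$ --- but all of this is standard. A secondary, purely cosmetic point is to confirm that the $\alpha$/$\beta$ placement in (\ref{result2}) is consistent with the $(1-\nu)$/$\nu$ weighting of Theorem~\ref{mohmmm} under the chosen assignment, which the above computation confirms.
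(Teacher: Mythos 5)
Your proof is correct, and it reaches Theorem \ref{mohmmm} by a slightly different factorization than the paper does. The paper takes the polar decomposition of $\T^*$, namely $\T^*=U|\T^*|$, and feeds $\D=U$, $\C=|\T^*|^{\beta}$, $\B=|\T^*|^{\alpha}$, $\A=U^*$ into Theorem \ref{mohmmm}, computing $|\B\A|^2=|\T|^{2\alpha}$ and $|(\D\C)^*|^2=|\T|^{2\beta}$ directly; it never simplifies the left-hand operator. You instead first establish the identity $\T^*|\T^*|^{\alpha+\beta-2}\T=|\T|^{\alpha+\beta}$ (via $|\T^*|^s=U|\T|^sU^*$ for $s>0$, with the boundary case $s=0$ handled separately) and then use the trivial factorization $\D=\B=I$, $\C=|\T|^{\alpha}$, $\A=|\T|^{\beta}$. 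Both routes are legitimate applications of the same theorem; yours has two small advantages: it makes visible that the operator on the left is positive (so its numerical radius is its norm), and it lands exactly on the $\alpha$/$\beta$ placement in (\ref{result2}), whereas the paper's choice of $\C$ and $\B$ produces $|\T|^{2\alpha/(1-\nu)}$ and $|\T|^{2\beta/\nu}$ in the opposite slots --- harmless, since the left-hand side depends only on $\alpha+\beta$, but strictly a relabeling away from the stated inequality. The one point you rightly flag, the functional-calculus identity with $t\mapsto t^s$ vanishing at $0$, is exactly where the care is needed, and you have handled it.
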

\begin{proof}
  Let $\T^* = U|\T^*|$ be the polar decomposition of the operator $\T^*$, where $U$ is partial isometry and the kernel
$\ker(U) = N(|\T|)$. Then $\T=|\T^*|U^*$.
 If we take $\D=U, \C=|\T^*|^{\beta}, \B=|\T^*|^{\alpha}$ and $\A=U^*$, we have
$$\D\C\B\A=U|\T^*|^{\beta}|\T^*|^{\alpha}U^*=\T^*|\T^*|^{\alpha+\beta-2}\T, |\B\A|^2=|\T|^{2\alpha}\,\,\mbox{and}\,\, |(\D\C)^*|^2=|\T|^{2\beta}.$$
So, the result follows by Theorem \ref{mohmmm}.
\end{proof}
\begin{corollary}\label{maka3}
  Let $\T\in\bh$, $\alpha,\beta\geq 0$,
  $0<\nu<1$ and let $\psi$ be a non-negative increasing convex function on $[0,\infty)$. Then
  \begin{equation}\label{mohm1}
     \psi(w^2(|\T|^{\alpha}\T^2|\S|^{\beta}))\leq \norm{(1-\nu) \psi\left(|\T|^{\frac{2\beta+2}{1-\nu}}\right)+\nu \psi\left(|\T^*|\T|^{\alpha}|^{\frac{2}{\nu}}\right)}-r\gamma(\psi)
   \end{equation}
  where $r$ and $\gamma(\psi)$ as in Theorem \ref{mohmmm}.
\end{corollary}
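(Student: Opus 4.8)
The plan is to recognize Corollary~\ref{maka3} as yet another specialization of Theorem~\ref{mohmmm}, obtained in exactly the same spirit as Corollaries~\ref{maka1} and~\ref{maka2}: one factors the operator inside the numerical radius as a product $\D\C\B\A$ of four bounded operators built from the polar decomposition of $\T$, and then simply reads off $\abs{\B\A}^2$ and $\abs{(\D\C)^*}^2$. (I should first flag that the $\abs{\S}^{\beta}$ in the statement is a misprint for $\abs{\T}^{\beta}$; the right-hand side involves only $\T$, and the exponent $2\beta+2$ is precisely the signal that the second factor of $\T^{2}$ contributes the extra ``$+2$''.) So first I would write the polar decomposition $\T=U\abs{\T}$, with $U$ the partial isometry satisfying $\ker U=N(\abs{\T})$, and recall the standard fact that the support projection $U^{*}U$ commutes with and acts as the identity on the range of every positive power $\abs{\T}^{s}$, $s>0$; this is the only place the kernel condition on $U$ is used, exactly as in the proofs of Corollaries~\ref{maka1}--\ref{maka2}.

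Next I would make the choice $\D=\abs{\T}^{\alpha}$, $\C=\T$, $\B=U$, $\A=\abs{\T}^{\beta+1}$, all of which lie in $\bh$ since $\alpha,\beta\ge 0$ and $\T$ is bounded. Then $\D\C\B\A=\abs{\T}^{\alpha}\,\T\,U\,\abs{\T}^{\beta+1}=\abs{\T}^{\alpha}\,\T\,(U\abs{\T})\,\abs{\T}^{\beta}=\abs{\T}^{\alpha}\T^{2}\abs{\T}^{\beta}$, which is exactly the operator inside the numerical radius. For the two ingredients of Theorem~\ref{mohmmm} I would compute $\abs{\B\A}^{2}=(U\abs{\T}^{\beta+1})^{*}(U\abs{\T}^{\beta+1})=\abs{\T}^{\beta+1}U^{*}U\abs{\T}^{\beta+1}=\abs{\T}^{2\beta+2}$, so that $\abs{\B\A}^{\frac{2}{1-\nu}}=\abs{\T}^{\frac{2\beta+2}{1-\nu}}$; and $\abs{(\D\C)^{*}}^{2}=(\D\C)(\D\C)^{*}=\abs{\T}^{\alpha}\,\T\T^{*}\,\abs{\T}^{\alpha}=\abs{\T}^{\alpha}\abs{\T^{*}}^{2}\abs{\T}^{\alpha}=(\abs{\T^{*}}\abs{\T}^{\alpha})^{*}(\abs{\T^{*}}\abs{\T}^{\alpha})=\abs{\,\abs{\T^{*}}\abs{\T}^{\alpha}\,}^{2}$, so that $\abs{(\D\C)^{*}}^{\frac{2}{\nu}}=\abs{\,\abs{\T^{*}}\abs{\T}^{\alpha}\,}^{\frac{2}{\nu}}$.

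Finally I would substitute these expressions into the general inequality of Theorem~\ref{mohmmm}, namely $\psi(w^{2}(\D\C\B\A))\le\norm{(1-\nu)\psi(\abs{\B\A}^{\frac{2}{1-\nu}})+\nu\,\psi(\abs{(\D\C)^{*}}^{\frac{2}{\nu}})}-r\gamma(\psi)$ with $r=\min\{\nu,1-\nu\}$, which yields the claimed bound, the functional $\gamma(\psi)$ being the one defined in Theorem~\ref{mohmmm} evaluated at $\abs{\B\A}^{\frac{2}{1-\nu}}=\abs{\T}^{\frac{2\beta+2}{1-\nu}}$ and $\abs{(\D\C)^{*}}^{\frac{2}{\nu}}=\abs{\,\abs{\T^{*}}\abs{\T}^{\alpha}\,}^{\frac{2}{\nu}}$; this matches the phrase ``$r$ and $\gamma(\psi)$ as in Theorem~\ref{mohmmm}''.

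There is no genuine obstacle here: the entire content is the bookkeeping above, and it is routine once the four operators are chosen. The one point requiring a little care is engineering $\D,\C,\B,\A$ so that $\abs{(\D\C)^{*}}^{2}$ telescopes exactly to $\abs{\,\abs{\T^{*}}\abs{\T}^{\alpha}\,}^{2}$ while $\abs{\B\A}^{2}$ collapses to a pure power of $\abs{\T}$; getting the shift $\beta\mapsto\beta+1$ (equivalently the exponent $2\beta+2$) right is precisely what forces $\C$ to absorb one full copy of $\T$ out of $\T^{2}$ while $\A$ carries $\abs{\T}^{\beta+1}$. It is also worth double-checking the identity $U^{*}U\abs{\T}^{s}=\abs{\T}^{s}$ for $s>0$, which is where the hypothesis $\ker U=N(\abs{\T})$ enters, and — as noted — correcting the misprint $\abs{\S}^{\beta}\rightsquigarrow\abs{\T}^{\beta}$ in the statement.
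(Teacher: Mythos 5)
Your proof is correct and follows essentially the same route as the paper: specialize Theorem \ref{mohmmm} with $\D=\abs{\T}^{\alpha}$ and $\C=\T$, then read off $\abs{\B\A}^2=\abs{\T}^{2\beta+2}$ and $\abs{(\D\C)^*}^2=\abs{\T^*\abs{\T}^{\alpha}}^2$ (and you are right that the $\abs{\S}^{\beta}$ in the statement is a misprint for $\abs{\T}^{\beta}$). The only difference is that the paper takes $\B=\T$ and $\A=\abs{\T}^{\beta}$ directly, giving $\abs{\B\A}^2=\abs{\T}^{\beta}\abs{\T}^{2}\abs{\T}^{\beta}=\abs{\T}^{2\beta+2}$ with no polar decomposition at all, so your detour through $\B=U$, $\A=\abs{\T}^{\beta+1}$ and the identity $U^{*}U\abs{\T}^{s}=\abs{\T}^{s}$ is valid but unnecessary here.
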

\begin{proof} In Theorem \ref{mohmmm}, if we let
$\D=|\T|^{\alpha},\C=\T,\B=\T$ and $\A=|\T|^{\beta}$, then
\begin{eqnarray*}
  |\B\A|^2 &=& \A^*|\B|^2\A=|\T|^{\beta}|\T|^2|\T|^{\alpha}=|\T|^{2\beta+2} \\
   |(\D\C)^*|^2&=&\D|\C^*|^2\D^*=|\T|^{\alpha}|\T^*|^2|\T|^{\alpha}=|\T|^{\alpha}\T\T^*|\T|^{\alpha}\\
   &=&|\T|^{\alpha}\T(|\T|^{\alpha}\T)^*=|(|\T|^{\alpha}\T)^*|^2=|\T^*|\T|^{\alpha}|^2,
\end{eqnarray*}
   so the result
\end{proof}
Inequalities for numerical radius and operator norm have now been given, although in the context of superquadratic functions.
Remember that a function $\psi:[0,\infty) \to \R$ is termed superquadratic if there exists a constant $C_x\in \R$ such that
\begin{equation}\label{sq}
  \psi (t) \geq  \psi (\xi) + C_{\xi} (t-\xi) + \psi (|t-\xi|)
\end{equation}
 for all $t\geq 0$. If $-\psi$ is superquadratic, we say $\psi$ is subquadratic. As a result, for a superquadratic function,
  $\psi$ must be above its tangent line plus a translation of $\psi$.
 Superquadratic functions appear to be stronger than convex functions at first glance, however they may be deemed weaker if $\psi$ has negative values. If $\psi$ is superquadratic and non-negative, then Then $\psi$ is increasing and convex, and if $C_{\xi}$ is equal to (\ref{sq}), then $C_{\xi}\geq 0$ \cite{AJS}.
\begin{theorem}\label{supqad}
  Let $\A\in\bh$ and let $\psi$ be a non-negative superquadratic function. Then
  \begin{equation}\label{sq2}
    \psi(w(\A))\leq \norm{\psi(|\A|)}-\inf_{\norm{\xi}=1}\norm{\psi(||\A|-\norm{\A}|)^{\frac{1}{2}}}^2,
  \end{equation}
\end{theorem}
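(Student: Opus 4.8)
The plan is to mimic the proof strategy of Theorem~\ref{mohmmm}, but now exploiting the superquadratic inequality (\ref{sq}) instead of mere convexity. First I would recall the mixed Schwarz inequality in its simplest form: for any unit vector $\xi\in\h$,
$$|\seq{\A\xi,\xi}|\leq \seq{|\A|\xi,\xi},$$
so that $\psi(|\seq{\A\xi,\xi}|)\leq \psi(\seq{|\A|\xi,\xi})$ by monotonicity of $\psi$ (recall a non-negative superquadratic $\psi$ is increasing). Hence it suffices to bound $\sup_{\norm{\xi}=1}\psi(\seq{|\A|\xi,\xi})$.

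Next I would set $T=|\A|\geq 0$ and, for a fixed unit vector $\xi$, apply the superquadratic defining inequality (\ref{sq}) with $t$ running over the spectrum of $T$ and with the base point $\xi_0=\seq{T\xi,\xi}$; integrating against the spectral measure of $T$ in the state $\seq{\,\cdot\,\xi,\xi}$ gives a Jensen-type estimate
$$\psi(\seq{T\xi,\xi})\leq \seq{\psi(T)\xi,\xi}-\seq{\psi(|T-\seq{T\xi,\xi}I|)\xi,\xi},$$
where the linear term $C_{\xi_0}(\seq{T\xi,\xi}-\seq{T\xi,\xi})=0$ drops out — this is the standard ``superquadratic Jensen'' refinement. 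Then I would bound $\seq{\psi(T)\xi,\xi}\leq \norm{\psi(|\A|)}$ and, for the subtracted term, use $|T-\seq{T\xi,\xi}I|\leq |T|+\seq{T\xi,\xi}I$ is the wrong direction, so instead I would note that on the spectrum of $T$ one has $\seq{T\xi,\xi}\leq \norm{T}=\norm{\A}$, hence pointwise $|t-\seq{T\xi,\xi}|$ is controlled; more carefully, since $\psi$ is increasing one wants a \emph{lower} bound for $\seq{\psi(|T-\seq{T\xi,\xi}I|)\xi,\xi}$, which is delicate. The cleanest route is: apply (\ref{sq}) once more — since $\psi$ is superquadratic, $\psi(|t-\seq{T\xi,\xi}|)\geq$ something — or simply pass to the infimum over unit vectors at the very end, writing
$$\psi(w(\A))\leq \norm{\psi(|\A|)}-\inf_{\norm{\xi}=1}\seq{\psi(||\A|-\seq{|\A|\xi,\xi}I|)\xi,\xi}$$
and then replacing $\seq{|\A|\xi,\xi}$ by $\norm{\A}$ inside, after checking the monotonicity goes the right way, and finally recognizing the resulting quadratic-form expression as $\norm{\psi(||\A|-\norm{\A}|)^{1/2}}^2$ via $\seq{\psi(S)\xi,\xi}=\norm{\psi(S)^{1/2}\xi}^2$ for $S=||\A|-\norm{\A}I|\geq 0$ and taking the supremum that defines the operator norm.

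The main obstacle I anticipate is precisely this last matching step: the subtracted quantity in the statement is $\inf_{\norm{\xi}=1}\norm{\psi(||\A|-\norm{\A}|)^{1/2}}^2$, i.e. an infimum over unit vectors of a \emph{fixed} operator's quadratic form, whereas the natural output of the superquadratic Jensen inequality is $\inf_{\norm{\xi}=1}\seq{\psi(||\A|-\seq{|\A|\xi,\xi}I|)\xi,\xi}$, where the operator inside depends on $\xi$. Reconciling these requires showing $\seq{|\A|\xi,\xi}\leq \norm{\A}$ together with the fact that $s\mapsto \psi(|t-s|)$ behaves monotonically enough near $s=\norm{\A}$ — or, more likely in the spirit of this paper, simply invoking $\psi$ increasing and $|t-\seq{|\A|\xi,\xi}|\le |t-\norm{\A}|$ \emph{fails} in general, so one instead argues that the supremum over $\xi$ in $w(\A)=\sup\psi(\seq{|\A|\xi,\xi})$ is attained in the limit only when $\seq{|\A|\xi,\xi}\to\norm{\A}$, and along such a maximizing sequence the correction term tends to $\norm{\psi(||\A|-\norm{\A}|)^{1/2}}^2$; formalizing this limiting/infimum bookkeeping is the genuinely fiddly part. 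Everything else — the mixed Schwarz step, the superquadratic Jensen step, and the rewriting $\seq{\psi(S)\xi,\xi}=\norm{\psi(S)^{1/2}\xi}^2$ — is routine given the lemmas already recorded.
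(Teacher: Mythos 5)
Your strategy diverges from the paper's and, as written, has two genuine gaps. First, the ``mixed Schwarz inequality in its simplest form'' you start from, $|\seq{\A\xi,\xi}|\leq\seq{|\A|\xi,\xi}$, is false in general: for $\A=\begin{pmatrix}0&1\\0&0\end{pmatrix}$ and a unit vector $\xi=(a,b)$ one gets $|\seq{\A\xi,\xi}|=|a||b|$ while $\seq{|\A|\xi,\xi}=|b|^2$, and the former exceeds the latter whenever $|a|>|b|$. The correct elementary form is $|\seq{\A\xi,\xi}|\leq\seq{|\A|\xi,\xi}^{1/2}\seq{|\A^*|\xi,\xi}^{1/2}$, which does not feed into your Jensen step. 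Second, and more seriously, anchoring the superquadratic inequality (\ref{sq}) at the $\xi$-dependent point $\seq{|\A|\xi,\xi}$ produces the correction term $\seq{\psi(||\A|-\seq{|\A|\xi,\xi}I|)\xi,\xi}$, and --- as you yourself observe --- there is no monotonicity converting this into the fixed-operator quantity $\seq{\psi(||\A|-\norm{\A}I|)\xi,\xi}$ that appears in the statement; the limiting argument you sketch along a maximizing sequence is precisely the part you leave unproved, so the proof is not complete.

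The paper avoids both problems by a more direct route: it takes the \emph{fixed scalar} $\norm{\A}$ as the base point in (\ref{sq}), obtaining $\psi(t)\geq\psi(\norm{\A})+C_{\norm{\A}}(t-\norm{\A})+\psi(|t-\norm{\A}|)$ for all $t\geq 0$, applies the functional calculus to $|\A|$, and takes quadratic forms to get
$$\seq{\psi(|\A|)\xi,\xi}\geq\psi(\norm{\A})+C_{\norm{\A}}\bra{\seq{|\A|\xi,\xi}-\norm{\A}}+\norm{\psi(||\A|-\norm{\A}|)^{1/2}\xi}^2.$$
The correction operator is then $||\A|-\norm{\A}|$ from the outset, with no reconciliation needed; evaluating along unit vectors $\xi_n$ with $\seq{|\A|\xi_n,\xi_n}\to\norm{\A}$ kills the linear term, the last term is bounded below by the infimum over unit vectors, and $\psi(w(\A))\leq\psi(\norm{\A})$ (since $w(\A)\leq\norm{\A}$ and $\psi$ is increasing) finishes the argument. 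If you want to salvage your approach, replace your variable anchor by this fixed one; your superquadratic Jensen step and the identity $\seq{\psi(S)\xi,\xi}=\norm{\psi(S)^{1/2}\xi}^2$ are otherwise sound.
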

\begin{proof}
  Letting $\xi=\norm{\A}$ in the inequality (\ref{sq}), we get
  \begin{equation}\label{sq3}
    \psi(t)\geq \psi(\norm{\A})+C_{\norm{\A}}(t-\norm{\A})+\psi(|t-\norm{\A}|).
  \end{equation}
  By applying functional calculus for the operator $|\A|$ in (\ref{sq3}) we get
  \begin{equation}\label{sq4}
    \psi(|\A|)\geq \psi(\norm{\A})+C_{\norm{\A}}(|\A|-\norm{\A})+\psi(||\A|-\norm{\A}|)
  \end{equation}
  Hence,
  $$\seq{\psi(|\A|)\xi,\xi}\geq \psi(\norm{\A})+C_{\norm{\A}}(\seq{|\A|\xi,\xi}-\norm{\A})+\seq{\psi(||\A|-\norm{\A}|)\xi,\xi}.$$
  Consequently,
  \begin{equation}\label{sq5}
  \seq{\psi(|\A|)\xi,\xi}\geq \psi(\norm{\A})+C_{\norm{\A}}(\seq{|\A|\xi,\xi}-\norm{\A})+\norm{\psi(||\A|-\norm{\A}|)^{\frac{1}{2}}x}^{2}
  \end{equation}
  for every unit vector $\xi\in\h$.\\
  Now, by taking supremum over $\xi\in\h$ with $\norm{\xi}=1$ in (\ref{sq5}), and using the fact $w(|\A|)=\norm{\A}\geq w(\A)$,
  and $\psi$ is increasing, we deduce the desired inequality (\ref{sq2}).
\end{proof}
Applying Theorem \ref{supqad} to the superquadratic function $\psi(t)= t^r$ $(r \geq 2)$, we reach the
 following corollary:
 \begin{corollary}
   Let $A\in\bh$. Then for any $r\geq 2$,
$$w^r(\A)\leq \norm{\A}^r-\inf_{\norm{x}=1}\norm{||\A|-\norm{\A}|^{\frac{r}{2}}}^2.$$
In particular
$$w(\A)\leq\sqrt{\norm{\A}^2-\inf_{\norm{x}=1}\norm{||\A|-\norm{\A}|}^2}\leq \norm{\A}.$$
 \end{corollary}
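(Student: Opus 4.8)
The plan is to obtain both displays by specializing Theorem~\ref{supqad} to the power function $\psi(t)=t^r$. The first thing to check is that this $\psi$ is eligible, i.e.\ that $t\mapsto t^r$ is a non-negative superquadratic function on $[0,\infty)$ exactly when $r\ge 2$. Non-negativity is clear. For superquadraticity I would invoke the standard differential criterion recorded in \cite{AJS}: a $C^1$ function $\psi$ on $[0,\infty)$ with $\psi(0)=0$ whose derivative $\psi'$ is non-negative and convex is superquadratic, and one may then take $C_\xi=\psi'(\xi)$ in (\ref{sq}). Since $(t^r)'=r\,t^{r-1}$ is non-negative and convex precisely when $r-1\ge 1$, the function $t^r$ is superquadratic (with $C_\xi=r\,\xi^{r-1}\ge 0$) for every $r\ge 2$, and is in particular non-negative and increasing, as required by the hypotheses of Theorem~\ref{supqad}.

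Next I would simply read off the three terms in (\ref{sq2}) for this choice of $\psi$. On the left, $\psi(w(\A))=w(\A)^r=w^r(\A)$. For the first term on the right, $\psi(|\A|)=|\A|^r$ is a positive operator, so by the spectral mapping property of the operator norm on positive operators together with $\norm{|\A|}=\norm{\A}$ one gets $\norm{\psi(|\A|)}=\norm{|\A|^r}=\norm{|\A|}^r=\norm{\A}^r$. For the subtracted term, functional calculus applied to the positive operator $\,||\A|-\norm{\A}|\,$ gives $\psi\bra{||\A|-\norm{\A}|}^{1/2}=\bra{||\A|-\norm{\A}|^{r}}^{1/2}=||\A|-\norm{\A}|^{r/2}$, whence $\norm{\psi(||\A|-\norm{\A}|)^{1/2}}^2=\norm{||\A|-\norm{\A}|^{r/2}}^2$. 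Substituting these three identifications into (\ref{sq2}) yields $w^r(\A)\le\norm{\A}^r-\inf_{\norm{x}=1}\norm{||\A|-\norm{\A}|^{r/2}}^2$, which is the first assertion.

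For the \emph{in particular} statement I would take $r=2$ in what was just proved, obtaining $w^2(\A)\le \norm{\A}^2-\inf_{\norm{x}=1}\norm{||\A|-\norm{\A}|}^2$. The right-hand side is non-negative since it dominates $w^2(\A)\ge 0$, so taking square roots is legitimate and gives the first displayed inequality; the estimate $\sqrt{\norm{\A}^2-\inf_{\norm{x}=1}\norm{||\A|-\norm{\A}|}^2}\le\norm{\A}$ then follows because the quantity subtracted under the root is $\ge 0$. I do not anticipate any real obstacle here: the only input that is not a one-line functional-calculus identity is the superquadraticity of $t^r$ for $r\ge 2$, which is a known fact quoted from \cite{AJS}, and everything else is bookkeeping of norms of functions of the positive operators $|\A|$ and $\,||\A|-\norm{\A}|\,$.
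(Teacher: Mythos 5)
Your proposal is correct and follows exactly the paper's route: the paper obtains this corollary by the single remark that one applies Theorem~\ref{supqad} to the superquadratic function $\psi(t)=t^r$ with $r\ge 2$, which is precisely your plan. The extra details you supply --- the derivative criterion for superquadraticity of $t^r$, the functional-calculus identifications $\norm{\psi(|\A|)}=\norm{\A}^r$ and $\psi(||\A|-\norm{\A}|)^{1/2}=||\A|-\norm{\A}|^{r/2}$, and the square-root step for the ``in particular'' clause --- are all sound and merely fill in bookkeeping the paper leaves implicit.
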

\section{Further refinements of numerical radius inequalities}
In this section, We provide various inequalities involving power numerical radii  and the
usual operator norms of Hilbert space operators. In particular, if $\A_i, \B_i$ and $\X_i$ are bounded
linear operators ($i = 1,2,\cdots n\in\N$) , then we estimate the numerical radius
to $\sum_{j=1}^{m}\X_i\A_i^m\B_i$ for some $m\in \N$.\\
The following lemma is a straightforward application of Jensen's inequality about the convexity or concavity of certain power functions.
Schl\"omilch's inequality for the weighted means of non-negative real numbers is a specific example of this inequality.
\begin{lemma}\label{Jensen}
  Let $\sigma,\tau>0$ and $0\leq \alpha\leq 1$. Then
  \begin{equation}
    \sigma^{\alpha}\tau^{1-\alpha}\leq \alpha \sigma+(1-\alpha)\tau\leq \left(\alpha \sigma^r+(1-\alpha)\tau^r\right)^{\frac{1}{r}}\quad\mbox{for}\quad r\geq 1.
  \end{equation}
\end{lemma}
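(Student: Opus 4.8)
The plan is to read the assertion as a chain of two independent inequalities — in the language of power means, $M_{0}\le M_{1}\le M_{r}$ for the weights $(\alpha,1-\alpha)$ — and to dispose of each by an elementary convexity argument; neither half is genuinely hard.

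First I would treat the left inequality $\sigma^{\alpha}\tau^{1-\alpha}\leq \alpha\sigma+(1-\alpha)\tau$. This is precisely the weighted arithmetic--geometric mean inequality already invoked in the proof of Theorem~\ref{Rahma1}. For a self-contained argument one may take logarithms and use the concavity of $\log$ on $(0,\infty)$, so that $\log\bigl(\alpha\sigma+(1-\alpha)\tau\bigr)\geq \alpha\log\sigma+(1-\alpha)\log\tau=\log\bigl(\sigma^{\alpha}\tau^{1-\alpha}\bigr)$, and then exponentiate, the exponential being increasing. The degenerate cases $\alpha=0$ and $\alpha=1$ give equality.

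Next I would establish the right inequality $\alpha\sigma+(1-\alpha)\tau\leq \bigl(\alpha\sigma^{r}+(1-\alpha)\tau^{r}\bigr)^{1/r}$ for $r\geq 1$. Setting $f(t)=t^{r}$, convexity of $f$ on $[0,\infty)$ (valid since $r\geq 1$) together with the scalar Jensen inequality for the two-point weighting $(\alpha,1-\alpha)$ yields $\bigl(\alpha\sigma+(1-\alpha)\tau\bigr)^{r}\leq \alpha\sigma^{r}+(1-\alpha)\tau^{r}$. Since $t\mapsto t^{1/r}$ is increasing on $[0,\infty)$, applying it to both sides produces the claimed bound. Concatenating the two chains of inequalities completes the proof.

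There is no real obstacle here: the one point deserving a moment's attention is the monotone-root step in the second part, where one must invoke that $t\mapsto t^{1/r}$ is increasing so as not to reverse the inequality when passing from $r$-th powers back to the quantities themselves; everything else is routine.
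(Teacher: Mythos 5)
Your argument is correct and matches what the paper intends: the lemma is stated there without proof, merely as ``a straightforward application of Jensen's inequality'' for convex/concave power functions, which is exactly your two-step Jensen argument (concavity of $\log$ for the arithmetic--geometric half, convexity of $t\mapsto t^{r}$ plus monotonicity of $t\mapsto t^{1/r}$ for the power-mean half). Nothing is missing; the endpoint cases $\alpha\in\{0,1\}$ are handled as you note.
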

The following result was established by \cite{KM}, which is a refinement of the scalar Young inequality.
\begin{lemma}\label{alqad1}
  Let $\sigma,\tau>0$, and $p,q>1$ such that $\frac{1}{p}+\frac{1}{q}=1$. Then
  \begin{equation}\label{Eq2}
    \sigma\tau+r_0(\sigma^{\frac{p}{2}}-\tau^{\frac{q}{2}})^2\leq \frac{\sigma^p}{p}+\frac{\tau^q}{q},
  \end{equation}
  where $r_0=\min\{\frac{1}{p},\frac{1}{q}\}$.
\end{lemma}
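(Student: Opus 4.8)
The plan is to recognize Lemma~\ref{alqad1} as a restatement of the Kittaneh--Manasrah refinement of Young's inequality (the scalar inequality underlying \cite{KM}), namely that for $a,b>0$ and $\nu\in(0,1)$,
$$a^{\nu}b^{1-\nu}+r\bra{\sqrt{a}-\sqrt{b}}^{2}\leq \nu a+(1-\nu)b,\qquad r=\min\{\nu,1-\nu\},$$
and then to recover the stated form by the change of variables $\nu=\frac1p$, $1-\nu=\frac1q$, $a=\sigma^{p}$, $b=\tau^{q}$. Under this substitution one has $a^{\nu}b^{1-\nu}=\sigma\tau$, $\nu a+(1-\nu)b=\frac{\sigma^{p}}{p}+\frac{\tau^{q}}{q}$, $\sqrt{a}=\sigma^{p/2}$, $\sqrt{b}=\tau^{q/2}$, and $r=\min\{\frac1p,\frac1q\}=r_0$, so the displayed inequality becomes precisely (\ref{Eq2}). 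Thus it suffices to prove the two-variable inequality above.

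To do so, I would first assume without loss of generality that $\nu\leq\frac12$, so that $r=\nu$; the complementary case $\nu\geq\frac12$ follows immediately by interchanging the roles of $a$ and $b$ and replacing $\nu$ by $1-\nu$. Expanding $\bra{\sqrt{a}-\sqrt{b}}^{2}=a-2\sqrt{ab}+b$ and cancelling the common summand $\nu a$ from both sides, the inequality to be proved reduces to
$$a^{\nu}b^{1-\nu}\leq (1-2\nu)b+2\nu\sqrt{ab}.$$
Dividing by $b>0$ and setting $t=\sqrt{a/b}>0$, this is equivalent to the one-variable estimate
$$t^{2\nu}\leq (1-2\nu)+2\nu t.$$

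Finally, since $0<2\nu\leq 1$, the function $g(t)=t^{2\nu}$ is concave on $(0,\infty)$, hence lies below its tangent line at $t=1$; that tangent line is $g(1)+g'(1)(t-1)=1+2\nu(t-1)=(1-2\nu)+2\nu t$, which is exactly the required bound. (Alternatively, this last step is a direct application of Lemma~\ref{yahya1} or of the weighted arithmetic--geometric mean inequality applied to $t^{2\nu}=t^{2\nu}\cdot 1^{1-2\nu}$.) Tracing the substitutions backwards then yields (\ref{Eq2}). I do not expect any real obstacle here: the only points needing a little care are the symmetric treatment of the case $\nu\geq\frac12$ and checking that each division is by a strictly positive quantity, both of which are immediate from the hypotheses $\sigma,\tau>0$ and $p,q>1$.
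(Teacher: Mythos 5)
Your proof is correct. Note that the paper itself gives no argument for this lemma: it is simply quoted from the reference \cite{KM} as an established refinement of the scalar Young inequality, so there is no in-paper proof to compare against. Your derivation is a complete, self-contained verification and is essentially the standard Kittaneh--Manasrah argument: the substitution $\nu=\frac1p$, $a=\sigma^{p}$, $b=\tau^{q}$ correctly transports the two-variable form to (\ref{Eq2}); the reduction by symmetry to $\nu\leq\frac12$ is legitimate because $(\sqrt a-\sqrt b)^{2}$ is symmetric in $a,b$ while $\min\{\nu,1-\nu\}$ is invariant under $\nu\mapsto 1-\nu$; and the final one-variable estimate $t^{2\nu}\leq(1-2\nu)+2\nu t$ is exactly the weighted arithmetic--geometric mean inequality (equivalently, concavity of $t\mapsto t^{2\nu}$ and the tangent line at $t=1$). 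The only cosmetic remark is that your parenthetical appeal to Lemma~\ref{yahya1} is unnecessary and slightly off target, since that lemma is an operator statement; the scalar AM--GM justification you also give is the right one.
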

Manasrah and Kittaneh  have generalized (\ref{Eq2}) in \cite{KM1}, as follows:
\begin{lemma}\label{alqad2}
  Let $\sigma,\tau>0$, and $p,q>1$ such that $\frac{1}{p}+\frac{1}{q}=1$. Then for $m=1,2,\cdots,$ we have
  \begin{equation}\label{Eq.3}
    (\sigma^{\frac{1}{p}}\tau^{\frac{1}{q}})^{m}+r_0^m(\sigma^{\frac{m}{2}}-\tau^{\frac{m}{2}})^2\leq \left(\frac{\sigma^r}{p}+\frac{\tau^r}{q}\right)^{\frac{m}{r}},\quad r\geq 1
  \end{equation}
where $r_0=\min\{\frac{1}{p},\frac{1}{q}\}$. In particular, if $p=q=2$, then
\begin{equation}\label{Eq.4}
  (\sqrt{\sigma\tau})^{m}+\frac{1}{2^m}(\sigma^{\frac{m}{2}}-\tau^{\frac{m}{2}})^2\leq 2^{-\frac{m}{r}}(\sigma^r+\tau^r)^{\frac{m}{r}}.
\end{equation}
For $m=1$, and $p=q=2$, we have
\begin{equation}\label{Eq.5}
  \sqrt{\sigma\tau}+\frac{1}{2}(\sqrt{\sigma}-\sqrt{\tau})^2\leq 2^{-\frac{1}{r}}(\sigma^r+\tau^r)^{\frac{1}{r}}.
\end{equation}
\end{lemma}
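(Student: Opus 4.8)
The plan is to combine two routine reductions with one delicate scalar inequality, the latter being essentially the content of \cite{KM1}. First I would eliminate $r$: by the second inequality in Lemma \ref{Jensen} with $\alpha=\frac1p$ (so $1-\alpha=\frac1q$), $\frac\sigma p+\frac\tau q\le\bigl(\frac{\sigma^r}p+\frac{\tau^r}q\bigr)^{1/r}$ for every $r\ge1$, and raising this to the $m$-th power gives $\bigl(\frac\sigma p+\frac\tau q\bigr)^m\le\bigl(\frac{\sigma^r}p+\frac{\tau^r}q\bigr)^{m/r}$. Hence it suffices to prove (\ref{Eq.3}) for $r=1$, that is
\[
\bigl(\sigma^{1/p}\tau^{1/q}\bigr)^m+r_0^m\bigl(\sigma^{m/2}-\tau^{m/2}\bigr)^2\le\Bigl(\frac\sigma p+\frac\tau q\Bigr)^m .
\]
When $m=1$ this is exactly Lemma \ref{alqad1} with $\sigma$ replaced by $\sigma^{1/p}$ and $\tau$ by $\tau^{1/q}$, since then $\sigma^{p/2}-\tau^{q/2}$ becomes $\sigma^{1/2}-\tau^{1/2}$ and $\frac{\sigma^p}p+\frac{\tau^q}q$ becomes $\frac\sigma p+\frac\tau q$.

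For general $m$ I would again feed scaled arguments into Lemma \ref{alqad1}: replacing $\sigma$ by $\sigma^{m/p}$ and $\tau$ by $\tau^{m/q}$ there yields $\bigl(\sigma^{1/p}\tau^{1/q}\bigr)^m+r_0\bigl(\sigma^{m/2}-\tau^{m/2}\bigr)^2\le\frac{\sigma^m}p+\frac{\tau^m}q$. Since $r_0^m\le r_0$, subtracting $(r_0-r_0^m)\bigl(\sigma^{m/2}-\tau^{m/2}\bigr)^2\ge0$ from the left shows the displayed $r=1$ case will follow once one proves the ``reverse-Jensen defect'' bound
\[
\frac{\sigma^m}p+\frac{\tau^m}q-\Bigl(\frac\sigma p+\frac\tau q\Bigr)^m\le(r_0-r_0^m)\bigl(\sigma^{m/2}-\tau^{m/2}\bigr)^2 .
\]
I would prove this by induction on $m$ (taking $\sigma\ge\tau>0$): it holds trivially at $m=1$, and at $m=2$ it is the identity $\frac1{pq}=r_0(1-r_0)$ after observing both sides are a constant times $(\sigma-\tau)^2$. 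For the step one uses the convexity-defect identity — obtained by telescoping $\sigma^m-c^m$ and $\tau^m-c^m$ twice, with $c=\frac\sigma p+\frac\tau q$ —
\[
\frac{\sigma^m}p+\frac{\tau^m}q-c^m=\frac{(\sigma-\tau)^2}{pq}\sum_{j=0}^{m-2}c^{\,j}\sum_{l=0}^{m-2-j}\sigma^l\tau^{\,m-2-j-l},
\]
together with $\frac1{pq}=r_0(1-r_0)$, the factorization $\sigma^{m/2}-\tau^{m/2}=(\sigma^{1/2}-\tau^{1/2})\sum_{j=0}^{m-1}\sigma^{j/2}\tau^{(m-1-j)/2}$, and the elementary bounds $c\le\max\{\sigma,\tau\}$ and $c\le(1-r_0)\sigma+r_0\tau$; after clearing the common factor $(\sigma^{1/2}-\tau^{1/2})^2$ one is left with a homogeneous polynomial inequality in $\sigma^{1/2},\tau^{1/2}$ to be closed by weighted AM--GM and rearrangement.

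The main obstacle is precisely this final polynomial inequality. Inequality (\ref{Eq.3}) is sharp at $\sigma=\tau$ \emph{and} asymptotically sharp as $\tau\to0$ (its two sides then tend to $r_0^m\sigma^m$ and $\bigl(\frac\sigma p\bigr)^m$, which coincide when $\frac1p=r_0$), so in the inductive step no estimate may afford to lose anything; managing this bookkeeping is exactly the argument carried out in \cite{KM1}. Finally, (\ref{Eq.4}) is the specialization $p=q=2$ — whence $r_0=\frac12$, $\sigma^{1/p}\tau^{1/q}=\sqrt{\sigma\tau}$, and $\frac{\sigma^r}p+\frac{\tau^r}q=\frac{\sigma^r+\tau^r}2$ — and (\ref{Eq.5}) is that case with, in addition, $m=1$.
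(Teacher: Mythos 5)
The paper offers no proof of this lemma at all --- it is quoted verbatim from \cite{KM1} --- so there is nothing internal to compare your argument against; it has to stand on its own, and unfortunately it does not. Your two preliminary reductions are fine: the power--mean step $\frac{\sigma}{p}+\frac{\tau}{q}\le\bigl(\frac{\sigma^r}{p}+\frac{\tau^r}{q}\bigr)^{1/r}$ correctly disposes of $r$, and feeding $\sigma^{m/p},\tau^{m/q}$ into Lemma \ref{alqad1} correctly gives $(\sigma^{1/p}\tau^{1/q})^m+r_0(\sigma^{m/2}-\tau^{m/2})^2\le\frac{\sigma^m}{p}+\frac{\tau^m}{q}$. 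The gap is that the ``reverse-Jensen defect'' inequality to which you then reduce everything is \emph{false} for $p\ne q$. Take $m=3$, $p=4$, $q=4/3$ (so $r_0=\frac14$ and $r_0-r_0^3=\frac{15}{64}$) and let $\sigma\to0^+$, $\tau=1$. Then
\[
\frac{\sigma^{3}}{p}+\frac{\tau^{3}}{q}-\Bigl(\frac{\sigma}{p}+\frac{\tau}{q}\Bigr)^{3}\;\longrightarrow\;\frac34-\frac{27}{64}=\frac{21}{64},
\qquad
(r_0-r_0^{3})\bigl(\sigma^{3/2}-\tau^{3/2}\bigr)^{2}\;\longrightarrow\;\frac{15}{64},
\]
and $\frac{21}{64}>\frac{15}{64}$. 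The lemma itself survives at this point (the left side of (\ref{Eq.3}) tends to $r_0^{3}=\frac1{64}\le\frac{27}{64}$), so it is your intermediate step, not the statement, that breaks: applying Lemma \ref{alqad1} to $\sigma^{m/p},\tau^{m/q}$ always attaches the coefficient $r_0=\min\{\frac1p,\frac1q\}$ to the remainder, whereas the Jensen defect $\frac{\sigma^m}{p}+\frac{\tau^m}{q}-(\frac\sigma p+\frac\tau q)^m$ is governed by $\max\{\frac1p,\frac1q\}$ when the variable carrying the larger weight dominates. Your $m=2$ identity $\frac1{pq}=r_0(1-r_0)$ is precisely the last case in which the two coincide; restricting to $\sigma\ge\tau$ does not help, because swapping the variables swaps $p$ and $q$ and reproduces the same failure.

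Consequently the ``final polynomial inequality'' you defer to \cite{KM1} is not merely hard bookkeeping --- it is not true, and it is not what \cite{KM1} proves. To repair the argument you must abandon the additive detour through $\frac{\sigma^m}{p}+\frac{\tau^m}{q}$ and run the induction on $m$ directly on the target inequality $(\sigma^{1/p}\tau^{1/q})^m+r_0^m(\sigma^{m/2}-\tau^{m/2})^2\le(\frac\sigma p+\frac\tau q)^m$, multiplying the inductive hypothesis by the $m=1$ case and absorbing the cross terms, so that the remainder keeps its coefficient $r_0^m$ throughout instead of being compared against a defect of the wrong size. Your treatment of the $m=1$ base case and the specializations to (\ref{Eq.4}) and (\ref{Eq.5}) (where $p=q=2$, $r_0=\frac12$) are correct once (\ref{Eq.3}) is granted.
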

The convexity of the function $\psi(t)=t^r$, $r\geq 1$ leads to the following lemma, which deals with positive real numbers.
\begin{lemma}\label{Logain1}
  Let $\sigma_i,i=1,\cdots,n$ be positive real numbers. Then
  \begin{equation}\label{hoopy1}
    \left(\sum_{i=1}^{n}\sigma_i\right)^r\leq n^{r-1}\sum_{i=1}^{n}\sigma_i^r\quad \mbox{for}\,\,r\geq 1.
  \end{equation}
\end{lemma}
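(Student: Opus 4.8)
The plan is to obtain \eqref{hoopy1} directly from the convexity of the power function $\psi(t)=t^{r}$ on $[0,\infty)$, which holds precisely for $r\geq 1$ — the standing hypothesis. No new tools are required beyond the finite Jensen inequality applied with uniform weights, so the argument is essentially a one-line consequence of convexity.

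First I would fix $r\geq 1$, set $\psi(t)=t^{r}$, and record that $\psi$ is convex on $[0,\infty)$. Applying Jensen's inequality with the $n$ equal weights $1/n$ to the points $\sigma_{1},\dots,\sigma_{n}\geq 0$ gives
$$\left(\frac{1}{n}\sum_{i=1}^{n}\sigma_{i}\right)^{r}=\psi\!\left(\frac{1}{n}\sum_{i=1}^{n}\sigma_{i}\right)\leq \frac{1}{n}\sum_{i=1}^{n}\psi(\sigma_{i})=\frac{1}{n}\sum_{i=1}^{n}\sigma_{i}^{r}.$$
Multiplying both sides by $n^{r}$ and using $n^{r}\cdot n^{-1}=n^{r-1}$ yields exactly the claimed inequality (\ref{hoopy1}).

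An equivalent route, if one prefers to bypass Jensen, is H\"older's inequality with the conjugate exponents $r$ and $r/(r-1)$: writing $\sigma_{i}=\sigma_{i}\cdot 1$ and summing,
$$\sum_{i=1}^{n}\sigma_{i}\leq \left(\sum_{i=1}^{n}\sigma_{i}^{r}\right)^{1/r}\left(\sum_{i=1}^{n}1\right)^{(r-1)/r}=n^{(r-1)/r}\left(\sum_{i=1}^{n}\sigma_{i}^{r}\right)^{1/r},$$
and raising to the $r$-th power reproduces the statement. One could also proceed by induction on $n$ from the two-term case $(\sigma+\tau)^{r}\leq 2^{r-1}(\sigma^{r}+\tau^{r})$, which is itself a rescaled instance of Lemma \ref{Jensen}; but the convexity argument is the cleanest and I would present it as the main proof, relegating the H\"older variant to a parenthetical remark.

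There is no genuine obstacle here. The only points worth flagging are that the inequality is sharp (equality when all $\sigma_{i}$ coincide, and trivially when $r=1$), and that the hypothesis $r\geq 1$ is exactly what makes $\psi$ convex — for $0<r<1$ the function is concave and the inequality reverses. This lemma will be used downstream only to pass from a sum inside a power to a sum of powers with the combinatorial factor $n^{r-1}$, so the clean statement above is all that is needed.
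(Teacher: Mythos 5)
Your proof is correct and follows exactly the route the paper indicates: the paper states the lemma as a consequence of the convexity of $\psi(t)=t^{r}$ for $r\geq 1$ (without writing out the details), and your Jensen-with-equal-weights argument is precisely that computation made explicit. The H\"older and induction variants you mention are fine but unnecessary; the main argument matches the paper's intent.
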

\begin{theorem}\label{thm3.1}
  Let $\A_i,\C_i,\\D_i\in\bh$, $(i=1,\cdots,n)$, $m\in\N$. Then
  \begin{equation}\label{Ineq.3.1}
    w^r\bra{\sum_{i=1}^{n}\D_i\C_i^{m}\A_i}\leq \frac{n^{r-1}}{2m}\sum_{j=1}^{m}\norm{\sum_{i=1}^{n}\bra{|\C_i^{j}\A_i|^{2r}+|(\D_i\C_i^{m-j})^*|^{2r}}}
  \end{equation}
  for all $r\geq 1$.
\end{theorem}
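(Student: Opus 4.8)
The plan is to fix an index $j\in\{1,\dots,m\}$ and a unit vector $\xi\in\h$, and to exploit the factorization $\D_i\C_i^m\A_i=\bra{\D_i\C_i^{m-j}}\bra{\C_i^j\A_i}$. Applying Lemma~\ref{D1} with $\D=\D_i$, $\C=\C_i^{m-j}$, $\B=\C_i^j$, $\A=\A_i$ and $\zeta=\xi$ (or simply the Cauchy--Schwarz inequality after writing $\seq{\D_i\C_i^m\A_i\xi,\xi}=\seq{\C_i^j\A_i\xi,(\D_i\C_i^{m-j})^*\xi}$) yields
$$\abs{\seq{\D_i\C_i^m\A_i\xi,\xi}}\le \seq{\abs{\C_i^j\A_i}^2\xi,\xi}^{1/2}\seq{\abs{(\D_i\C_i^{m-j})^*}^2\xi,\xi}^{1/2}$$
for every $i$. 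Summing over $i$ and using the triangle inequality gives an upper bound for $\abs{\seq{\sum_i\D_i\C_i^m\A_i\xi,\xi}}$ by $\sum_i\seq{\abs{\C_i^j\A_i}^2\xi,\xi}^{1/2}\seq{\abs{(\D_i\C_i^{m-j})^*}^2\xi,\xi}^{1/2}$.

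Next I would perform three scalar reductions, in this order. First, raise the inequality to the $r$-th power and apply Lemma~\ref{Logain1} to distribute the power over the $i$-sum, producing the factor $n^{r-1}$ and the term $\sum_i\bra{\seq{\abs{\C_i^j\A_i}^2\xi,\xi}\seq{\abs{(\D_i\C_i^{m-j})^*}^2\xi,\xi}}^{r/2}$. Second, apply the arithmetic--geometric mean inequality (Lemma~\ref{Jensen} with $\alpha=\tfrac12$) to the pair $\seq{\abs{\C_i^j\A_i}^2\xi,\xi}^r$ and $\seq{\abs{(\D_i\C_i^{m-j})^*}^2\xi,\xi}^r$, which replaces each product by $\tfrac12$ of the sum of these $r$-th powers. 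Third, apply the H\"older--McCarthy inequality (Lemma~\ref{Holder}(i), valid since $r\ge1$) to each positive operator to pass from $\seq{\abs{\cdot}^2\xi,\xi}^r$ to $\seq{\abs{\cdot}^{2r}\xi,\xi}$. Combining these yields, for each fixed $j$,
$$\abs{\seq{\sum_{i=1}^{n}\D_i\C_i^m\A_i\,\xi,\xi}}^r\le\frac{n^{r-1}}{2}\seq{\sum_{i=1}^{n}\bra{\abs{\C_i^j\A_i}^{2r}+\abs{(\D_i\C_i^{m-j})^*}^{2r}}\xi,\xi}.$$

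Now take the supremum over all unit vectors $\xi$: on the left, since $t\mapsto t^r$ is continuous and increasing, the supremum equals $w^r\bra{\sum_i\D_i\C_i^m\A_i}$; on the right, since $\sum_i\bra{\abs{\C_i^j\A_i}^{2r}+\abs{(\D_i\C_i^{m-j})^*}^{2r}}$ is positive, the supremum of its quadratic form equals its operator norm. This gives the desired bound with a single $j$ in place of the average. Finally, since the resulting inequality holds for every $j\in\{1,\dots,m\}$, averaging the $m$ inequalities over $j$ produces exactly \eqref{Ineq.3.1}. The only real subtlety — not a deep obstacle but the place where care is needed — is organizational: the power $r$ must be introduced before invoking Lemma~\ref{Logain1} and the arithmetic--geometric mean step, H\"older--McCarthy must be applied last so that the exponents come out as $2r$ rather than something larger, and the supremum over $\xi$ must be taken for each $j$ separately (before averaging over $j$), since otherwise the operator norm would not end up inside the sum over $j$.
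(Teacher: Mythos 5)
Your proposal is correct and follows essentially the same route as the paper: the factorization $\C_i^m=\C_i^{m-j}\C_i^j$, Lemma~\ref{D1}, Lemma~\ref{Logain1}, the H\"older--McCarthy inequality, the arithmetic--geometric mean bound, and an average over $j$. The only differences are cosmetic --- the paper applies H\"older--McCarthy before the AM--GM step (either order works, contrary to your caution) and takes the supremum over $\xi$ once at the end rather than for each $j$ separately, which yields the same bound.
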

\begin{proof}
  Let $\xi\in\h$ be any unit vector. Then by Lemma \ref{D1}, Lemma \ref{Jensen} and Lemma
\ref{Logain1}, we obtain that
\begin{eqnarray*}
  \abs{\seq{\sum_{i=1}^{n}\D_i\C_i^{m}\A_i\xi,\xi} }^{r}&=& \frac{1}{m}\sum_{j=1}^{m}\abs{\seq{\sum_{i=1}^{n}\D_i\C_i^{m-j}\C_i^{j}\A_i\xi,\xi }}^{r} \\
   &\leq&\frac{1}{m}\sum_{j=1}^{m}\bra{\sum_{i=1}^{n}\abs{\seq{\D_i\C_i^{m-j}\C_i^{j}\A_i\xi,\xi}}}^{r}
   \end{eqnarray*}
  This implies that
    \begin{eqnarray*}
 \abs{\seq{\sum_{i=1}^{n}\D_i\C_i^{m}\A_i\xi,\xi} }^{r}  &\leq&\frac{n^{r-1}}{m}\sum_{j=1}^{m}\sum_{i=1}^{n}\abs{\seq{\D_i\C_i^{m-j}\C_i^{j}\A_i\xi,\xi}}^{r}  \\
  &\leq&\frac{n^{r-1}}{m}\sum_{j=1}^{m}\sum_{i=1}^{n}\seq{|\C_i^{j}\A_i|^2\xi,\xi}^{\frac{r}{2}}\seq{|(\D_i\C_i^{m-j})^*|^2\xi,\xi}^{\frac{r}{2}} \\
   &\leq&\frac{n^{r-1}}{m}\sum_{j=1}^{m}\sum_{i=1}^{n}\seq{|\C_i^{j}\A_i|^{2r}\xi,\xi}^{\frac{1}{2}}\seq{|(\D_i\C_i^{m-j})^*|^{2r}\xi,\xi}^{\frac{1}{2}}\\
   &\leq& \frac{n^{r-1}}{2m}\sum_{j=1}^{m}\sum_{i=1}^{n}\seq{\bra{|\C_i^{j}\A_i|^{2r}+|(\D_i\C_i^{m-j})^*|^{2r}}\xi,\xi}
\end{eqnarray*}
Taking the supremum over all unit vectors $\xi\in\h$, we get the result.
\end{proof}
For $\D_i=\A_i=I$ in inequality (\ref{Ineq.3.1}), we have
\begin{corollary}\label{cor.3.2}
  Let $\C_i\in\bh$, ($i=1,\cdots,n$), $m\in\N$. Then
  \begin{equation}\label{Ineq.3.2}
    w^r\bra{\sum_{i=1}^{n}\C_i^{m}}\leq \frac{n^{r-1}}{2m}\sum_{j=1}^{m}\norm{\sum_{i=1}^{n}\bra{|\C_i^{j}|^{2r}+|(\C_i^{m-j})^*|^{2r}}}
  \end{equation}
  for all $r\geq 1$.
\end{corollary}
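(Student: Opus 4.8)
The plan is to derive this statement directly from Theorem~\ref{thm3.1} by specialization. In inequality (\ref{Ineq.3.1}) I would take $\D_i=\A_i=I$ for every $i=1,\dots,n$, where $I$ is the identity operator on $\h$, a legitimate element of $\bh$. Then the left-hand side of (\ref{Ineq.3.1}) becomes $w^r\bra{\sum_{i=1}^{n}\C_i^{m}}$, since $\D_i\C_i^{m}\A_i=\C_i^{m}$.

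For the right-hand side I would simplify the two families of positive operators that occur there. Because $\A_i=I$, we have $|\C_i^{j}\A_i|^2=\A_i^*|\C_i^{j}|^2\A_i=|\C_i^{j}|^2$, hence $|\C_i^{j}\A_i|^{2r}=|\C_i^{j}|^{2r}$; and because $\D_i=I$, we have $|(\D_i\C_i^{m-j})^*|^2=\D_i|(\C_i^{m-j})^*|^2\D_i^*=|(\C_i^{m-j})^*|^2$, hence $|(\D_i\C_i^{m-j})^*|^{2r}=|(\C_i^{m-j})^*|^{2r}$. Substituting these identities into (\ref{Ineq.3.1}) yields precisely
\[
  w^r\bra{\sum_{i=1}^{n}\C_i^{m}}\leq \frac{n^{r-1}}{2m}\sum_{j=1}^{m}\norm{\sum_{i=1}^{n}\bra{|\C_i^{j}|^{2r}+|(\C_i^{m-j})^*|^{2r}}}
\]
for all $r\geq 1$, which is the assertion.

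There is no genuine obstacle; the only point requiring a little care is the bookkeeping in propagating the substitution $\D_i=\A_i=I$ through the absolute-value notation, i.e. making sure the identities $|XY|^2=Y^*|X|^2Y$ and $|(XY)^*|^2=X|Y^*|^2X^*$ are applied to the correct operators in the correct slots. If one prefers not to invoke those identities, an equivalent route is to rerun the proof of Theorem~\ref{thm3.1} verbatim with $\D_i=\A_i=I$: the same chain of estimates (Lemma~\ref{D1}, Lemma~\ref{Jensen}, Lemma~\ref{Logain1}) goes through unchanged. Quoting the already-established theorem is the cleaner presentation.
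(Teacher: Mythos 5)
Your proposal is correct and is exactly the paper's own route: the corollary is stated there as the specialization $\D_i=\A_i=I$ of Theorem~\ref{thm3.1}, and your simplifications $|\C_i^{j}\A_i|^{2r}=|\C_i^{j}|^{2r}$ and $|(\D_i\C_i^{m-j})^*|^{2r}=|(\C_i^{m-j})^*|^{2r}$ are the right (and only) bookkeeping needed.
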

The following is an example of how Corollary \ref{cor.3.2} may be used. It entails a numerical radius inequality for operator powers.
\begin{corollary}
  Let $\C\in\bh$ and $m\in\N$. Then for all $r\geq 1$, we have
  $$w^r(\C^m)\leq \frac{1}{2m}\sum_{j=1}^{m}\norm{|\C^j|^{2r}+|(\C^{m-j})^*|^{2r}}.$$
\end{corollary}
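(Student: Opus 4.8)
The plan is to read off this inequality as the degenerate case $n = 1$ of Corollary \ref{cor.3.2}. First I would set $n = 1$ and $\C_1 = \C$ in (\ref{Ineq.3.2}); then the prefactor $n^{r-1} = 1^{r-1} = 1$ disappears and the inner sum $\sum_{i=1}^{n}$ reduces to its single term, so the right-hand side becomes exactly $\frac{1}{2m}\sum_{j=1}^{m}\norm{|\C^j|^{2r}+|(\C^{m-j})^*|^{2r}}$, while the left-hand side is $w^r(\C^m)$. That is precisely the assertion, and no further work is needed.

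If one wished to keep the derivation self-contained rather than invoking the intermediate corollary, the same bound drops straight out of Theorem \ref{thm3.1}: take $n = 1$, $\D_1 = \A_1 = I$, and $\C_1 = \C$, so that $\D_1\C_1^{m}\A_1 = \C^{m}$, $|\C_1^{j}\A_1|^{2r} = |\C^{j}|^{2r}$, and $|(\D_1\C_1^{m-j})^{*}|^{2r} = |(\C^{m-j})^{*}|^{2r}$; inequality (\ref{Ineq.3.1}) then reads verbatim as claimed. Either route establishes the corollary without any new estimate.

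There is essentially no obstacle here — it is a pure specialization — so the only thing I would be careful about is the trivial bookkeeping: that $n^{r-1}$ equals $1$ at $n=1$ for every $r \geq 1$, that the hypotheses $m \in \N$ and $r \geq 1$ transfer without change, and that each $|\C^{j}|^{2r} + |(\C^{m-j})^{*}|^{2r}$ is a sum of positive operators so the operator norms on the right are meaningful. As a sanity check on the endpoints of the summation I would also note that the term $j = m$ contributes $|\C^{m}|^{2r} + |(\C^{0})^{*}|^{2r} = |\C^{m}|^{2r} + I$, which is consistent with the convention $\C^{0}=I$ already implicit in Corollary \ref{cor.3.2}.
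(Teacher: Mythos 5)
Your proposal is correct and matches the paper's intended derivation exactly: the corollary is obtained by specializing Corollary \ref{cor.3.2} to $n=1$ (equivalently, Theorem \ref{thm3.1} with $n=1$ and $\D_1=\A_1=I$), where the prefactor $n^{r-1}$ becomes $1$. No further comment is needed.
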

\begin{theorem}
  Let $\A_i,\C_i,\D_i\in\bh$, $(i=1,\cdots,n)$, $m\in\N$ and $0\leq \alpha\leq 1$. Then
  \begin{equation}\label{Ineq.3.1}
    w\bra{\sum_{i=1}^{n}\D_i\C_i^{m}\A_i}\leq
    \frac{1}{2m}\sum_{j=1}^{m}\sum_{i=1}^{n}\norm{\alpha|\C_i^{j}\A_i|^{\frac{2r}{\alpha}}
    +(1-\alpha)|(\D_i\C_i^{m-j})^*|^{\frac{2r}{1-\alpha}}}^{\frac{1}{2r}}
  \end{equation}
  for all $r\geq 1$.
\end{theorem}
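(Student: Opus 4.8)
The plan is to follow the same scheme used in the proof of Theorem~\ref{thm3.1}, but to invoke Lemma~\ref{Jensen} in its sharper second inequality rather than the crude arithmetic–geometric mean step. Fix a unit vector $\xi\in\h$. Writing $\C_i^m=\C_i^{m-j}\C_i^j$ and averaging over $j=1,\dots,m$, the triangle inequality gives
\begin{equation*}
  \abs{\seq{\sum_{i=1}^n\D_i\C_i^m\A_i\,\xi,\xi}}\leq \frac{1}{m}\sum_{j=1}^m\sum_{i=1}^n\abs{\seq{\D_i\C_i^{m-j}\C_i^j\A_i\,\xi,\xi}}.
\end{equation*}
By Lemma~\ref{D1} applied with the four-operator factorization $\D_i,\,\C_i^{m-j},\,\C_i^j,\,\A_i$ (so that $\B\A=\C_i^j\A_i$ and $\D\C=\D_i\C_i^{m-j}$), each summand is bounded by $\seq{|\C_i^j\A_i|^2\xi,\xi}^{1/2}\seq{|(\D_i\C_i^{m-j})^*|^2\xi,\xi}^{1/2}$.

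Next I would apply the Hölder–McCarthy inequality (Lemma~\ref{Holder}(ii)) to pass from $\seq{|\C_i^j\A_i|^2\xi,\xi}^{1/2}$ to $\seq{|\C_i^j\A_i|^{2r/\alpha}\xi,\xi}^{\alpha/(2r)}$ and similarly for the other factor with exponent $2r/(1-\alpha)$; since $\tfrac{\alpha}{2r}\le\tfrac12$ and $\tfrac{1-\alpha}{2r}\le\tfrac12$ for $r\ge1$, these are legitimate. This turns each summand into $\sigma^\alpha\tau^{1-\alpha}$ with $\sigma=\seq{|\C_i^j\A_i|^{2r/\alpha}\xi,\xi}^{1/(2r)}$ and $\tau=\seq{|(\D_i\C_i^{m-j})^*|^{2r/(1-\alpha)}\xi,\xi}^{1/(2r)}$. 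Now Lemma~\ref{Jensen} — used as $\sigma^\alpha\tau^{1-\alpha}\le(\alpha\sigma^{2r}+(1-\alpha)\tau^{2r})^{1/(2r)}$, i.e.\ with the exponent $2r\ge1$ — bounds this by
\begin{equation*}
  \bra{\alpha\seq{|\C_i^j\A_i|^{2r/\alpha}\xi,\xi}+(1-\alpha)\seq{|(\D_i\C_i^{m-j})^*|^{2r/(1-\alpha)}\xi,\xi}}^{1/(2r)}
  =\seq{\bra{\alpha|\C_i^j\A_i|^{2r/\alpha}+(1-\alpha)|(\D_i\C_i^{m-j})^*|^{2r/(1-\alpha)}}\xi,\xi}^{1/(2r)}.
\end{equation*}

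Finally I would bound $\seq{\T\xi,\xi}^{1/(2r)}\le\norm{\T}^{1/(2r)}$ for the positive operator $\T=\alpha|\C_i^j\A_i|^{2r/\alpha}+(1-\alpha)|(\D_i\C_i^{m-j})^*|^{2r/(1-\alpha)}$, which is independent of $\xi$, and then take the supremum over unit vectors $\xi$ on the left. This yields exactly
\begin{equation*}
  w\bra{\sum_{i=1}^n\D_i\C_i^m\A_i}\leq \frac{1}{2m}\sum_{j=1}^m\sum_{i=1}^n\norm{\alpha|\C_i^j\A_i|^{2r/\alpha}+(1-\alpha)|(\D_i\C_i^{m-j})^*|^{2r/(1-\alpha)}}^{1/(2r)}.
\end{equation*}
The only genuinely delicate point is bookkeeping the exponents: one must check that the Hölder–McCarthy step is applied with a power in $(0,1]$ and that the subsequent Jensen step is applied with a power in $[1,\infty)$; both hold precisely because $0\le\alpha\le1$ and $r\ge1$. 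Edge cases $\alpha=0$ or $\alpha=1$ should be handled by the usual convention (one of the terms drops out and the corresponding fractional power is interpreted as a limit), or simply excluded as degenerate. Everything else is the routine triangle-inequality-then-supremum pattern already exhibited in Theorem~\ref{thm3.1}.
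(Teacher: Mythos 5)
Your argument is essentially the paper's own: decompose $\C_i^m=\C_i^{m-j}\C_i^j$, average over $j$, apply Lemma \ref{D1}, then H\"older--McCarthy followed by Lemma \ref{Jensen}; merging the paper's two separate H\"older--McCarthy steps (first $2\to 2/\alpha$, then $2/\alpha\to 2r/\alpha$) into a single one is harmless. The one caveat is the constant: the chain of inequalities you write down (and the one in the paper) terminates with coefficient $\frac{1}{m}$, not $\frac{1}{2m}$, because the weighted step $\sigma^{\alpha}\tau^{1-\alpha}\leq\bra{\alpha\sigma^{2r}+(1-\alpha)\tau^{2r}}^{\frac{1}{2r}}$ contributes no extra factor $\frac{1}{2}$, unlike the unweighted arithmetic--geometric mean used in Theorem \ref{thm3.1}; so your closing claim that this ``yields exactly'' the stated bound with $\frac{1}{2m}$ is not justified --- the $\frac{1}{2m}$ appears to be an error in the statement itself, and what both proofs actually establish is the inequality with $\frac{1}{m}$.
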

\begin{proof} Let $\xi\in\h$ be any unit vector. Then by Lemma \ref{D1}, Lemma \ref{Jensen} and Lemma
\ref{Logain1}, we obtain
\begin{eqnarray*}
\abs{\seq{\sum_{i=1}^{n}\D_i\C_i^{m}\A_i\xi,\xi} } &=& \frac{1}{m}\sum_{j=1}^{m}\abs{\seq{\sum_{i=1}^{n}\D_i\C_i^{m-j}\C_i^{j}\A_i\xi,\xi }} \\
 &\leq&\frac{1}{m}\sum_{j=1}^{m}\sum_{i=1}^{n}\seq{|\C_i^{j}\A_i|^2\xi,\xi}^{\frac{1}{2}}\seq{|(\D_i\C_i^{m-j})^*|^2\xi,\xi}^{\frac{1}{2}}
 \end{eqnarray*}
 Hence
 \begin{eqnarray*}
  \abs{\seq{\sum_{i=1}^{n}\D_i\C_i^{m}\A_i\xi,\xi} } &\leq&\frac{1}{m}\sum_{j=1}^{m}\sum_{i=1}^{n}\bra{\seq{|\C_i^{j}\A_i|^2\xi,\xi}\seq{|(\D_i\C_i^{m-j})^*|^2\xi,\xi}}^{\frac{1}{2}}\\
   &\leq& \frac{1}{m}\sum_{j=1}^{m}\sum_{i=1}^{n}\bra{\seq{|\C_i^{j}\A_i|^{\frac{2}{\alpha}}\xi,\xi}^{\alpha}
   \seq{|(\D_i\C_i^{m-j})^*|^{\frac{2}{1-\alpha}}\xi,\xi}^{1-\alpha}}^{\frac{1}{2}}\\
   &\leq& \frac{1}{m}\sum_{j=1}^{m}\sum_{i=1}^{n}\bra{\alpha\seq{|\C_i^{j}\A_i|^{\frac{2}{\alpha}}\xi,\xi}^{r}+
   (1-\alpha)\seq{|(\D_i\C_i^{m-j})^*|^{\frac{2}{1-\alpha}}\xi,\xi}^{r}}^{\frac{1}{2r}}\\
   &\leq& \frac{1}{m}\sum_{j=1}^{m}\sum_{i=1}^{n}\seq{\bra{\alpha|\C_i^{j}\A_i|^{\frac{2r}{\alpha}}
   +(1-\alpha)|(\D_i\C_i^{m-j})^*|^{\frac{2r}{1-\alpha}}}\xi,\xi}^{\frac{1}{2r}}.
\end{eqnarray*}
Taking the supremum over all unit vectors $\xi\in\h$, we deduce the desired result.
\end{proof}
\begin{theorem}\label{N1}
  Let $\A_i,\C_i,\D_i\in\bh$, $(i=1,\cdots,n)$, $m\in\N$ and  $p,q>1$ such that $\frac{1}{p}+\frac{1}{q}=1$. Then
  \begin{equation}\label{N2}
     w^2\bra{\sum_{i=1}^{n}\D_i\C_i^{m}\A_i}\leq \frac{1}{m}\sum_{j=1}^{m}\norm{\sum_{i=1}^{n}\frac{1}{p}|\C_i^{j}\A_i|^{2p}+\frac{1}{q}|(\D_i\C_i^{m-j})^*|^{2q}}
    -r_0\inf_{\norm{x}=1}\psi(\xi),
  \end{equation}
 where $r_0=\min\{\frac{1}{p},\frac{1}{q}\}$ and
 \begin{equation*}
   \psi(\xi)=\frac{n}{m}\sum_{j=1}^{m}\sum_{i=1}^{n}\bra{\seq{||\C_i^{j}\A_i|^2\xi,\xi}^{\frac{p}{2}}-\seq{|(\D_i\C_i^{m-j})^*|\xi,\xi}^{\frac{q}{2}}}^2.
 \end{equation*}
\end{theorem}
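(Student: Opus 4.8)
The plan is to run the argument behind Theorem~\ref{thm3.1}, but with the arithmetic--geometric mean step replaced by the refined Young inequality of Lemma~\ref{alqad1} (which is what manufactures the extra subtracted term), and to carry that correction term along in its ``pointwise in $\xi$'' form until the very last step, where it is absorbed by passing to an infimum. Fix a unit vector $\xi\in\h$ and, for $1\le j\le m$ and $1\le i\le n$, write
$$a_{i,j}=\seq{|\C_i^{j}\A_i|^{2}\xi,\xi}\ge 0,\qquad b_{i,j}=\seq{|(\D_i\C_i^{m-j})^{*}|^{2}\xi,\xi}\ge 0 .$$
Since $\D_i\C_i^{m}\A_i=(\D_i\C_i^{m-j})(\C_i^{j}\A_i)$ for every $j$, averaging over $j=1,\dots,m$ gives $\sum_{i}\D_i\C_i^{m}\A_i=\frac1m\sum_{j=1}^{m}\sum_{i=1}^{n}(\D_i\C_i^{m-j})(\C_i^{j}\A_i)$, hence by the triangle inequality
$$\abs{\seq{\sum_{i=1}^{n}\D_i\C_i^{m}\A_i\,\xi,\xi}}\le\frac1m\sum_{j=1}^{m}\sum_{i=1}^{n}\abs{\seq{(\D_i\C_i^{m-j})(\C_i^{j}\A_i)\xi,\xi}} .$$

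I would then square this, apply Lemma~\ref{Logain1} with $r=2$ to the $mn$ summands (the square of a sum of $mn$ terms is at most $mn$ times the sum of the squares), and estimate each remaining factor by Lemma~\ref{D1} applied with the splitting $\D=\D_i$, $\C=\C_i^{m-j}$, $\B=\C_i^{j}$, $\A=\A_i$, $\zeta=\xi$, which gives $\abs{\seq{(\D_i\C_i^{m-j})(\C_i^{j}\A_i)\xi,\xi}}^{2}\le a_{i,j}b_{i,j}$. The result is
$$\abs{\seq{\sum_{i=1}^{n}\D_i\C_i^{m}\A_i\,\xi,\xi}}^{2}\le\frac nm\sum_{j=1}^{m}\sum_{i=1}^{n}a_{i,j}b_{i,j} .$$
The heart of the matter is then to apply the refined Young inequality (Lemma~\ref{alqad1}) to each scalar product $a_{i,j}b_{i,j}$ with $\sigma=a_{i,j}$ and $\tau=b_{i,j}$, namely
$$a_{i,j}b_{i,j}\le\frac1p a_{i,j}^{p}+\frac1q b_{i,j}^{q}-r_0\bigl(a_{i,j}^{p/2}-b_{i,j}^{q/2}\bigr)^{2},$$
and then to use the H\"older--McCarthy inequality (Lemma~\ref{Holder}(i), with the exponents $p\ge1$ and $q\ge1$, for the positive operators $|\C_i^{j}\A_i|^{2}$ and $|(\D_i\C_i^{m-j})^{*}|^{2}$) \emph{only} on the first two terms, replacing $a_{i,j}^{p}$ by $\seq{|\C_i^{j}\A_i|^{2p}\xi,\xi}$ and $b_{i,j}^{q}$ by $\seq{|(\D_i\C_i^{m-j})^{*}|^{2q}\xi,\xi}$; the subtracted square is left as it stands, and when summed over $i,j$ with the weight $n/m$ it is exactly $r_0\psi(\xi)$.

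Collecting the positive terms into $\seq{\bigl(\sum_{i=1}^{n}(\tfrac1p|\C_i^{j}\A_i|^{2p}+\tfrac1q|(\D_i\C_i^{m-j})^{*}|^{2q})\bigr)\xi,\xi}$ and dominating each such quantity by the operator norm of the positive operator inside (using $\seq{P\xi,\xi}\le\norm{P}$ for $P\ge0$), one obtains
$$\abs{\seq{\sum_{i=1}^{n}\D_i\C_i^{m}\A_i\,\xi,\xi}}^{2}\le\frac nm\sum_{j=1}^{m}\norm{\sum_{i=1}^{n}\Bigl(\frac1p|\C_i^{j}\A_i|^{2p}+\frac1q|(\D_i\C_i^{m-j})^{*}|^{2q}\Bigr)}-r_0\,\psi(\xi).$$
Since $\psi(\xi)\ge\inf_{\norm{x}=1}\psi(x)$, the right-hand side is bounded by the $\xi$-independent quantity in (\ref{N2}) (the leading constant coming out as $n/m$ from Lemma~\ref{Logain1} applied with $r=2$ to the $mn$-term sum, against the $1/m^{2}$ prefactor), and taking the supremum over all unit vectors $\xi$, together with $w^{2}(\cdot)=\sup_{\norm{x}=1}\abs{\seq{\cdot\,\xi,\xi}}^{2}$, finishes the proof. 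The main obstacle is not any single inequality but the bookkeeping in this chain: H\"older--McCarthy must be applied only to the ``good'' part $\tfrac1p a^{p}+\tfrac1q b^{q}$ (where enlarging the exponents does no harm), while the correction $r_0(a_{i,j}^{p/2}-b_{i,j}^{q/2})^{2}$ must be transported unchanged through both summations so that it reassembles into $r_0\psi(\xi)$ before the supremum is taken --- only then does the passage to the infimum yield the sharp, vector-free estimate.
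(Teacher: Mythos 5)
Your proposal is correct and follows essentially the same route as the paper's own proof: the same $j$-averaged splitting $\D_i\C_i^{m-j}\cdot\C_i^{j}\A_i$, the convexity bound of Lemma \ref{Logain1} producing the factor $n/m$, Lemma \ref{D1} to get the product $a_{i,j}b_{i,j}$, and the refined Young inequality of Lemma \ref{alqad1} combined with H\"older--McCarthy applied only to the positive part, with the correction term carried to the infimum at the end. Note only that, exactly like the paper's proof, your argument yields the leading constant $n/m$ rather than the $1/m$ printed in (\ref{N2}), so that discrepancy (and the missing square on $|(\D_i\C_i^{m-j})^*|$ in the displayed $\psi(\xi)$) lies in the statement rather than in your argument.
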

\begin{proof}
  Let $\xi\in\h$ be any unit vector. Then by Lemma \ref{D1}, Lemma \ref{alqad1} and Lemma
\ref{Logain1}, we obtain
\begin{eqnarray*}
\abs{\seq{\sum_{i=1}^{n}\D_i\C_i^{m}\A_i\xi,\xi} }^2 &=& \frac{1}{m}\sum_{j=1}^{m}\abs{\seq{\sum_{i=1}^{n}\D_i\C_i^{m-j}\C_i^{j}\A_i\xi,\xi }}^2 \\
 &\leq&\frac{n}{m}\sum_{j=1}^{m}\sum_{i=1}^{n}\abs{\seq{\D_i\C_i^{m-j}\C_i^{j}\A_i\xi,\xi }}^2\\
 &\leq&\frac{n}{m}\sum_{j=1}^{m}\sum_{i=1}^{n}\seq{|\C_i^{j}\A_i|^2\xi,\xi}\seq{|(\D_i\C_i^{m-j})^*|^2\xi,\xi}\\
  &\leq& \frac{n}{m}\sum_{j=1}^{m}\sum_{i=1}^{n}\bra{\frac{1}{p}\seq{|\C_i^{j}\A_i|^{2p}\xi,\xi}+\frac{1}{q}\seq{|(\D_i\C_i^{m-j})^*|^{2q}\xi,\xi}}\\
  &-&\frac{nr_0}{m}\sum_{j=1}^{m}\sum_{i=1}^{n}\bra{\seq{|\C_i^{j}\A_i|^2\xi,\xi}^{\frac{p}{2}}-\seq{|(\D_i\C_i^{m-j})^*|^2\xi,\xi}^{\frac{q}{2}}}^2
   \end{eqnarray*}
   This implies that
 \begin{eqnarray*}
\abs{\seq{\sum_{i=1}^{n}\D_i\C_i^{m}\A_i\xi,\xi} }^2  &\leq& \frac{n}{m}\sum_{j=1}^{m}\sum_{i=1}^{n}\seq{\bra{\frac{1}{p}|\C_i^{j}\A_i|^{2p}+\frac{1}{q}|(\D_i\C_i^{m-j})^*|^{2q}}\xi,\xi}\\
  &-&\frac{nr_0}{m}\sum_{j=1}^{m}\sum_{i=1}^{n}\bra{\seq{|\C_i^{j}\A_i|^2\xi,\xi}^{\frac{p}{2}}-\seq{|(\D_i\C_i^{m-j})^*|^2\xi,\xi}^{\frac{q}{2}}}^2.
\end{eqnarray*}
Taking the supremum over all unit vectors $\xi\in\h$, we deduce the desired result.
\end{proof}
\begin{theorem}\label{N3}
  Let $\A_i,\C_i,\D_i\in\bh$, $(i=1,\cdots,n)$, $m\in\N$ and  $p,q>1$ such that $\frac{1}{p}+\frac{1}{q}=1$ and $k=1,2,\cdots$. Then
  \begin{equation}\label{N4}
     w^{2k}\bra{\sum_{i=1}^{n}\D_i\C_i^{m}\A_i}\leq \frac{n^{2k-1}}{m}\sum_{j=1}^{m}
     \sum_{i=1}^{n}\norm{\frac{1}{p}|\C_i^{j}\A_i|^{2rp}+\frac{1}{q}|(\D_i\C_i^{m-j})^*|^{2qr}}^{\frac{k}{r}}
    -r_0^{k}\inf_{\norm{x}=1}\eta(\xi),
  \end{equation}
 where $r_0=\min\{\frac{1}{p},\frac{1}{q}\}$ and
 \begin{equation*}
   \eta(\xi)=\frac{n^{2k-1}}{m}\sum_{j=1}^{m}\sum_{i=1}^{n}\bra{\seq{|\C_i^{j}\A_i|^{2p}\xi,\xi}^{\frac{k}{2}}-\seq{|(\D_i\C_i^{m-j})^*|^{2q}\xi,\xi}^{\frac{k}{2}}}^2
 \end{equation*}
  for all $r\geq 1$.
\end{theorem}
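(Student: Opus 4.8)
The plan is to follow the same template as the proof of Theorem~\ref{N1}, but to raise things to the $k$-th power at the appropriate moment and invoke the power-mean estimates from Lemma~\ref{alqad2} and Lemma~\ref{Logain1} rather than the first-order Young refinement of Lemma~\ref{alqad1}. First I would fix a unit vector $\xi\in\h$ and write, as before,
\[
\abs{\seq{\textstyle\sum_{i=1}^{n}\D_i\C_i^{m}\A_i\,\xi,\xi}}^{2k}
=\frac{1}{m^{2k}}\abs{\sum_{j=1}^{m}\seq{\textstyle\sum_{i=1}^{n}\D_i\C_i^{m-j}\C_i^{j}\A_i\,\xi,\xi}}^{2k},
\]
then apply the convexity inequality \eqref{hoopy1} of Lemma~\ref{Logain1} twice — once over the index $j$ (introducing a factor $m^{2k-1}/m^{2k}=1/m$) and once over the index $i$ (introducing the factor $n^{2k-1}$) — to reduce to a double sum of the quantities $\abs{\seq{\D_i\C_i^{m-j}\C_i^{j}\A_i\,\xi,\xi}}^{2k}$. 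The mixed Schwarz inequality of Lemma~\ref{D1} bounds each such term by $\seq{\abs{\C_i^{j}\A_i}^2\xi,\xi}^{k}\seq{\abs{(\D_i\C_i^{m-j})^*}^2\xi,\xi}^{k}$.

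Next I would set $\sigma=\seq{\abs{\C_i^{j}\A_i}^2\xi,\xi}$ and $\tau=\seq{\abs{(\D_i\C_i^{m-j})^*}^2\xi,\xi}$, so that the preceding factor is exactly $(\sqrt{\sigma\tau})^{2k}=(\sigma^{1/p}\tau^{1/q}\cdot\sigma^{1/q}\tau^{1/p})^{k}$; more directly, I would apply Lemma~\ref{alqad2} with exponent $m$ replaced by our $k$ and with the roles of $\sigma,\tau$ taken by $\seq{\abs{\C_i^{j}\A_i}^{2p}\xi,\xi}^{1/p}$-type quantities. Concretely, the Hölder--McCarthy inequality (Lemma~\ref{Holder}(ii)) gives $\sigma\le\seq{\abs{\C_i^{j}\A_i}^{2p}\xi,\xi}^{1/p}$ and $\tau\le\seq{\abs{(\D_i\C_i^{m-j})^*}^{2q}\xi,\xi}^{1/q}$, so with $a:=\seq{\abs{\C_i^{j}\A_i}^{2p}\xi,\xi}$ and $b:=\seq{\abs{(\D_i\C_i^{m-j})^*}^{2q}\xi,\xi}$ one has $\sqrt{\sigma\tau}\le a^{1/(2p)}b^{1/(2q)}=(a^{1/p}b^{1/q})^{1/2}$, whence
\[
(\sqrt{\sigma\tau})^{2k}+r_0^{k}\bigl(a^{k/2}-b^{k/2}\bigr)^{2}
\le\Bigl(\tfrac{a^{r}}{p}+\tfrac{b^{r}}{q}\Bigr)^{k/r}
\]
by \eqref{Eq.3} (Lemma~\ref{alqad2}) applied to $a,b$ with the stated $r\ge1$. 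Summing over $i$ and $j$, dividing by $m$, and carrying along the $n^{2k-1}$ factor yields
\[
\abs{\seq{\textstyle\sum\D_i\C_i^{m}\A_i\,\xi,\xi}}^{2k}
\le\frac{n^{2k-1}}{m}\sum_{j=1}^{m}\sum_{i=1}^{n}
\Bigl(\tfrac{1}{p}\seq{\abs{\C_i^{j}\A_i}^{2rp}\xi,\xi}+\tfrac{1}{q}\seq{\abs{(\D_i\C_i^{m-j})^*}^{2rq}\xi,\xi}\Bigr)^{k/r}
-r_0^{k}\,\eta(\xi),
\]
where I have used Lemma~\ref{Holder}(ii) again (or simply the monotonicity obtained from $p\cdot r\ge p$) to replace $a,b$ by $\seq{\abs{\C_i^{j}\A_i}^{2rp}\xi,\xi},\seq{\abs{(\D_i\C_i^{m-j})^*}^{2rq}\xi,\xi}$ inside the $r$-mean, matching the $\eta(\xi)$ written in the statement after the obvious identification $p\leftrightarrow$ exponents.

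Finally I would take the supremum over unit vectors $\xi$. For the first (positive) term I use that $t\mapsto t^{k/r}$ is increasing and that for a positive operator $\T$ one has $\sup_{\norm{\xi}=1}\seq{\T\xi,\xi}=\norm{\T}$, together with the superadditivity-free bound $\sup\seq{(\cdot)\xi,\xi}^{k/r}\le\bigl(\sup\seq{(\cdot)\xi,\xi}\bigr)^{k/r}=\norm{\,\cdot\,}^{k/r}$ applied term-by-term (the sum of suprema dominates the supremum of the sum), producing $\frac{n^{2k-1}}{m}\sum_{j}\sum_{i}\norm{\frac1p\abs{\C_i^{j}\A_i}^{2rp}+\frac1q\abs{(\D_i\C_i^{m-j})^*}^{2rq}}^{k/r}$; for the subtracted term I use $-\sup(-\eta)= \inf\eta$, i.e. $-r_0^{k}\eta(\xi)\le -r_0^{k}\inf_{\norm{\xi}=1}\eta(\xi)$. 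Combining the two estimates and recalling $w(\cdot)=\sup_{\norm{\xi}=1}\abs{\seq{\cdot\,\xi,\xi}}$ gives \eqref{N4}. The main obstacle I anticipate is the bookkeeping in the step that converts $(\sqrt{\sigma\tau})^{2k}$ into the $r$-power mean with the correct subtracted square term: one must apply Lemma~\ref{alqad2} with its exponent equal to $k$ (not to $m$) and its base variables equal to the $p$- and $q$-th-power inner products, and then verify that the resulting error term is exactly the $\eta(\xi)$ in the statement — this requires being careful that Lemma~\ref{Holder}(ii) is applied in the direction that keeps the inequality valid and that the constant $r_0^{k}$ (rather than $r_0$ or $r_0^{m}$) emerges, which is where an inattentive argument would slip.
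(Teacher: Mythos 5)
Your proposal is correct and follows essentially the same route as the paper's own proof: the same $\frac{1}{m}\sum_{j}$ decomposition, Lemma \ref{Logain1} over the index $i$, the Schwarz bound of Lemma \ref{D1}, H\"older--McCarthy to pass to the $2p$-th and $2q$-th powers, and Lemma \ref{alqad2} applied with exponent $k$ to produce the $r$-power mean together with the subtracted term $\eta(\xi)$, followed by the supremum argument. The only nit is that your final replacement of $a^{r},b^{r}$ by $\seq{|\C_i^{j}\A_i|^{2pr}\xi,\xi}$ and $\seq{|(\D_i\C_i^{m-j})^*|^{2qr}\xi,\xi}$ rests on Lemma \ref{Holder}(i) (the $r\geq 1$ case), not part (ii) as you cite; the direction of the inequality you use is nevertheless the correct one.
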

\begin{proof} Let $\xi\in\h$ be any unit vector. Then by Lemma \ref{D1}, Lemma \ref{alqad2} and Lemma
\ref{Logain1}, we obtain
\begin{eqnarray*}
 \abs{\seq{\sum_{i=1}^{n}\D_i\C_i^{m}\A_i\xi,\xi} }^{2k}&=& \frac{1}{m}\sum_{j=1}^{m}\abs{\seq{\sum_{i=1}^{n}\D_i\C_i^{m-j}\C_i^{j}\A_i\xi,\xi }}^{2k} \\
 &\leq&\frac{n^{2k-1}}{m}\sum_{j=1}^{m}\sum_{i=1}^{n}\abs{\seq{\D_i\C_i^{m-j}\C_i^{j}\A_i\xi,\xi}}^{2k} \\
 \end{eqnarray*}
 This implies that
 \begin{eqnarray*}
\abs{\seq{\sum_{i=1}^{n}\D_i\C_i^{m}\A_i\xi,\xi} }^{2k} &\leq&\frac{n^{2k-1}}{m}\sum_{j=1}^{m}\sum_{i=1}^{n}\bra{\seq{|\C_i^{j}\A_i|^2\xi,\xi}\seq{|(\D_i\C_i^{m-j})^*|^2\xi,\xi}}^k\\
  &\leq&\frac{n^{2k-1}}{m}\sum_{j=1}^{m}\sum_{i=1}^{n}\bra{\seq{|\C_i^{j}\A_i|^{\frac{2p}{p}}\xi,\xi}\seq{|(\D_i\C_i^{m-j})^*|^{\frac{2q}{q}}\xi,\xi}}^k\\
   &\leq&\frac{n^{2k-1}}{m}\sum_{j=1}^{m}\sum_{i=1}^{n}\bra{\seq{|\C_i^{j}\A_i|^{2p}\xi,\xi}^{\frac{1}{p}}\seq{|(\D_i\C_i^{m-j})^*|^{2q}\xi,\xi}^{\frac{1}{q}}}^k\\
    &\leq&\frac{n^{2k-1}}{m}\sum_{j=1}^{m}\sum_{i=1}^{n}\seq{\bra{\frac{1}{p}|\C_i^{j}\A_i|^{2rp}+\frac{1}{q}|(\D_i\C_i^{m-j})^*|^{2qr}}\xi,\xi}^{\frac{k}{r}}\\
    &-&\frac{n^{2k-1}r_0^k}{m}\sum_{j=1}^{m}\sum_{i=1}^{n}\bra{\seq{|\C_i^{j}\A_i|^{2p}\xi,\xi}^{\frac{k}{2}}-\seq{|(\D_i\C_i^{m-j})^*|^{2q}\xi,\xi}^{\frac{k}{2}}}^2
\end{eqnarray*}
Taking the supremum over all unit vectors $\xi\in\h$, we deduce the desired result.
\end{proof}
For $k=1$, and $p=q=2$, we have
\begin{corollary}
   Let $\A_i,\C_i,\D_i\in\bh$, $(i=1,\cdots,n)$, $m\in\N$.  Then
  \begin{equation}\label{N4}
     w^{2}\bra{\sum_{i=1}^{n}\D_i\C_i^{m}\A_i}\leq \frac{n 2^{-\frac{1}{r}}}{m}\sum_{j=1}^{m}
     \sum_{i=1}^{n}\norm{|\C_i^{j}\A_i|^{4r}+|(\D_i\C_i^{m-j})^*|^{4r}}^{\frac{1}{r}}
    -\frac{1}{2}\inf_{\norm{x}=1}\eta(x),
  \end{equation}
 where
 \begin{equation*}
   \eta(x)=\frac{n}{m}\sum_{j=1}^{m}\sum_{i=1}^{n}\bra{\seq{|\C_i^{j}\A_i|^{4}\xi,\xi}^{\frac{1}{2}}-\seq{|(\D_i\C_i^{m-j})^*|^{4}\xi,\xi}^{\frac{1}{2}}}^2
 \end{equation*}
  for all $r\geq 1$.
\end{corollary}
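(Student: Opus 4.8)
The plan is to derive this statement as the special case $k=1$, $p=q=2$ of Theorem~\ref{N3}; no genuinely new argument is needed, only bookkeeping. First I would record the constants occurring in~(\ref{N4}) under this specialization: since $r_0=\min\{\tfrac1p,\tfrac1q\}=\tfrac12$ we get $r_0^{k}=\tfrac12$, and $n^{2k-1}=n^{1}=n$.

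Next I would simplify the norm appearing in the leading double sum. With $p=q=2$ one has
$$\frac{1}{p}\abs{\C_i^{j}\A_i}^{2rp}+\frac{1}{q}\abs{(\D_i\C_i^{m-j})^*}^{2qr}=\frac{1}{2}\bra{\abs{\C_i^{j}\A_i}^{4r}+\abs{(\D_i\C_i^{m-j})^*}^{4r}},$$
so raising to the power $k/r=1/r$ and pulling the scalar $\tfrac12$ out of the (positive operator) norm produces the factor $2^{-1/r}$; combined with the prefactor $n^{2k-1}/m=n/m$ from Theorem~\ref{N3} this gives the coefficient $n\,2^{-1/r}/m$ in front of $\sum_{j=1}^{m}\sum_{i=1}^{n}\norm{\abs{\C_i^{j}\A_i}^{4r}+\abs{(\D_i\C_i^{m-j})^*}^{4r}}^{1/r}$.

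Finally I would specialize the defect term. Setting $k=1$, $p=q=2$ in the expression for $\eta(\xi)$ in Theorem~\ref{N3} replaces $\seq{\abs{\C_i^{j}\A_i}^{2p}\xi,\xi}^{k/2}$ by $\seq{\abs{\C_i^{j}\A_i}^{4}\xi,\xi}^{1/2}$ (and similarly for the second summand) and $n^{2k-1}$ by $n$, yielding exactly the stated $\eta$. Assembling the three computations reproduces the asserted inequality. There is essentially no obstacle: the only point requiring care is tracking the scalar $\tfrac12$ as it passes through the $1/r$-th power of the operator norm, which is legitimate since $\norm{c\,T}^{1/r}=c^{1/r}\norm{T}^{1/r}$ for $c>0$ and $T\geq 0$. (Alternatively, one could repeat the proof of Theorem~\ref{N3} verbatim, invoking the special case~(\ref{Eq.5}) of Lemma~\ref{alqad2} in place of~(\ref{Eq.3}).)
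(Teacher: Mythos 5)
Your proposal is correct and coincides with the paper's own route: the corollary is stated immediately after Theorem~\ref{N3} with the words ``For $k=1$, and $p=q=2$, we have,'' i.e.\ it is obtained exactly by the specialization you carry out, with $r_0^k=\tfrac12$, $n^{2k-1}=n$, and the scalar $\tfrac12$ passing through the $1/r$-th power of the norm to give $2^{-1/r}$. Nothing further is needed.
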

The following lemma is an extended variant of the mixed Schwarz inequality, which has been
 shown by  \cite{kit1} and is highly relevant in the following results.
\begin{lemma}\label{M-Cauchy}
  Let $\A\in\bh$, and $\psi$ and $\phi$ be non-negative functions on $[0,\infty)$
  which are continuous such that $\psi(t)\phi(t)=t$ for all $t\in [0,\infty)$. Then
  \begin{equation}\label{C11}
    \abs{\seq{\A\xi,\zeta}}\leq \norm{\psi(|\A|)\xi}\norm{\phi(|\A^*|)\zeta},
  \end{equation}
  for all $\xi,\zeta\in\h$.
\end{lemma}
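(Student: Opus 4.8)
The plan is to reduce the asserted inequality (\ref{C11}) to the ordinary Cauchy--Schwarz inequality by combining the polar decomposition of $\A$ with the continuous functional calculus. Write $\A=U\abs{\A}$ for the polar decomposition, where $U$ is a partial isometry with $\n U=\n\abs{\A}$; then $U^*U$ is the orthogonal projection onto the closed range of $\abs{\A}$, so $U^*U\abs{\A}=\abs{\A}$. First I would record the two facts that drive the proof: $\abs{\A^*}=U\abs{\A}U^*$, and, more generally,
$$h(\abs{\A^*})=U\,h(\abs{\A})\,U^*\qquad\text{for every continuous }h\colon[0,\infty)\to\R\text{ with }h(0)=0.$$
For a monomial $h(t)=t^{n}$ ($n\ge 1$) this follows from $(U\abs{\A}U^*)^{n}=U\abs{\A}^{n}U^*$, obtained by repeatedly inserting $U^*U\abs{\A}=\abs{\A}$; the general case then follows by approximating $h$ uniformly on $[0,\norm{\A}]$ by polynomials vanishing at $0$ and passing to the norm limit, using $\sigma(\abs{\A}),\sigma(\abs{\A^*})\subseteq[0,\norm{\A}]$.

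Since $\psi$ and $\phi$ are continuous, non-negative and satisfy $\psi(t)\phi(t)=t$ on $[0,\infty)$, which contains the spectrum of $\abs{\A}$, the functional calculus gives $\psi(\abs{\A})\phi(\abs{\A})=\phi(\abs{\A})\psi(\abs{\A})=\abs{\A}$, and hence $\A=U\,\phi(\abs{\A})\,\psi(\abs{\A})$. As $\phi(\abs{\A})$ is self-adjoint, for $\xi,\zeta\in\h$ this yields $\seq{\A\xi,\zeta}=\seq{\psi(\abs{\A})\xi,\,\phi(\abs{\A})U^*\zeta}$, so Cauchy--Schwarz gives
$$\abs{\seq{\A\xi,\zeta}}\leq\norm{\psi(\abs{\A})\xi}\,\norm{\phi(\abs{\A})U^*\zeta}.$$

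It then remains only to show $\norm{\phi(\abs{\A})U^*\zeta}\leq\norm{\phi(\abs{\A^*})\zeta}$, i.e.\ the operator inequality $U\,\phi(\abs{\A})^{2}\,U^*\leq\phi(\abs{\A^*})^{2}$. Writing $\phi(t)^{2}=\phi(0)^{2}+h(t)$ with $h(t):=\phi(t)^{2}-\phi(0)^{2}$ continuous and $h(0)=0$, the intertwining identity gives $U\,\phi(\abs{\A})^{2}\,U^*=\phi(0)^{2}\,UU^*+h(\abs{\A^*})$, while $\phi(\abs{\A^*})^{2}=\phi(0)^{2}I+h(\abs{\A^*})$; their difference is $\phi(0)^{2}(UU^*-I)\leq 0$ because $UU^*\leq I$. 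Feeding $\norm{\phi(\abs{\A})U^*\zeta}^{2}=\seq{U\phi(\abs{\A})^{2}U^*\zeta,\zeta}\leq\seq{\phi(\abs{\A^*})^{2}\zeta,\zeta}=\norm{\phi(\abs{\A^*})\zeta}^{2}$ back into the previous display produces (\ref{C11}).

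I expect the only genuinely delicate point to be the intertwining identity $h(\abs{\A^*})=U\,h(\abs{\A})\,U^*$, together with the constant-term correction $\phi(0)^{2}(UU^*-I)$ that is needed precisely when $\phi(0)\neq 0$ (equivalently $\psi(0)=0$); the remaining steps are routine. Alternatively, one could prove the inequality first for operators $\A$ with dense range, where $U$ is an isometry and $UU^*=I$, and then recover the general case by a limiting argument, but the $UU^*\leq I$ observation above is more direct.
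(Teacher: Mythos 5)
Your argument is correct. The paper itself offers no proof of this lemma --- it is quoted verbatim from Kittaneh's work (the reference \cite{kit1}) --- and your proof is essentially the standard one from that source: polar decomposition $\A=U\abs{\A}$, the intertwining $h(\abs{\A^*})=U\,h(\abs{\A})\,U^*$ for continuous $h$ with $h(0)=0$, and Cauchy--Schwarz. Your explicit treatment of the case $\phi(0)\neq 0$ via the operator inequality $U\phi^2(\abs{\A})U^*\leq\phi^2(\abs{\A^*})$ (using $UU^*\leq I$) is a genuine point of care that is often glossed over; note that the hypothesis $\psi(t)\phi(t)=t$ forces $\psi(0)\phi(0)=0$, so at most one of the two constant-term corrections is ever nonzero, and your argument covers both possibilities.
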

The next results give improvements of the inequality (\ref{watfa2}).
\begin{theorem}\label{sch1}
  Let $\A_i,\B_i,\X_i\in\bh$, $(i=1,\cdots,n)$, $m\in\N$, $p,q>1$ with $\frac{1}{p}+\frac{1}{q}=1$ and let $\psi$ and $\phi$ be as in
  Lemma \ref{M-Cauchy}. Then for all $r\geq 1$, we have
  \begin{equation}\label{C1}
    w^{2r}\bra{\sum_{i=1}^{n}\X_i\A_i^m\B_i}\leq \frac{n^{2r-1}}{m}\sum_{j=1}^{m}\norm{\sum_{i=1}^{n}\frac{1}{p}S_{i,j}^{pr}+\frac{1}{q}T_{i,j}^{qr}}-r_0\inf_{\norm{x}=1}\rho(\xi),
  \end{equation}
  where $r_0=\min\{\frac{1}{p},\frac{1}{q}\}$, $S_{i,j}=\X_i\psi^2\bra{\abs{\A_i^{j*}}}\X_i^*$, $T_{i,j}=\bra{\A_i^{m-j}\B_i}^*\phi^2\bra{\abs{\A_i^j}}\A_i^{m-j}\B_i$ and
  $$\rho(\xi)=\frac{n^{2r-1}}{m}\sum_{j=1}^{m}\sum_{i=1}^{n}\bra{\seq{S_{i,j}^r\xi,\xi}^{\frac{p}{2}}-\seq{T_{i,j}^r\xi,\xi}^{\frac{q}{2}}}^2.$$
\end{theorem}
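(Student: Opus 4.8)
The plan is to mimic the proof of Theorem \ref{N3}, but with the ordinary mixed Schwarz inequality (Lemma \ref{D1}) replaced by the extended mixed Schwarz inequality of Lemma \ref{M-Cauchy}, which is precisely the device used to pass from inequalities about products $\D\C\B\A$ to inequalities about $\X\A^m\B$ with the auxiliary functions $\psi,\phi$. First I would fix a unit vector $\xi\in\h$ and write, for each $j=1,\dots,m$,
\[
\seq{\sum_{i=1}^n \X_i\A_i^m\B_i\,\xi,\xi}=\frac1m\sum_{j=1}^m\seq{\sum_{i=1}^n \X_i\A_i^j\A_i^{m-j}\B_i\,\xi,\xi},
\]
so that by the triangle inequality and Lemma \ref{Logain1} (applied with exponent $2r$ to the inner sum over $i$, and then trivially to the average over $j$),
\[
\abs{\seq{\sum_{i=1}^n \X_i\A_i^m\B_i\,\xi,\xi}}^{2r}\leq \frac{n^{2r-1}}{m}\sum_{j=1}^m\sum_{i=1}^n \abs{\seq{\X_i\A_i^j(\A_i^{m-j}\B_i)\xi,\xi}}^{2r}.
\]

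The next step is the crucial one: apply Lemma \ref{M-Cauchy} to the operator $\X_i\A_i^j$ acting between the vectors $\xi$ and $(\A_i^{m-j}\B_i)^*\xi$. Actually the cleaner route, matching the definitions of $S_{i,j}$ and $T_{i,j}$, is to apply Lemma \ref{M-Cauchy} to $\A_i^j$ with the pairing $\seq{\A_i^j(\A_i^{m-j}\B_i)\xi,\X_i^*\xi}$, giving
\[
\abs{\seq{\X_i\A_i^j\A_i^{m-j}\B_i\,\xi,\xi}}\leq \norm{\phi(\abs{\A_i^j})\A_i^{m-j}\B_i\xi}\,\norm{\psi(\abs{\A_i^{j*}})\X_i^*\xi}=\seq{T_{i,j}\xi,\xi}^{1/2}\seq{S_{i,j}\xi,\xi}^{1/2},
\]
where I have used $\norm{\psi(\abs{\A_i^{j*}})\X_i^*\xi}^2=\seq{\X_i\psi^2(\abs{\A_i^{j*}})\X_i^*\xi,\xi}=\seq{S_{i,j}\xi,\xi}$ and similarly for $T_{i,j}$. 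Raising to the power $2r$ yields $\abs{\seq{\X_i\A_i^j\A_i^{m-j}\B_i\,\xi,\xi}}^{2r}\leq \seq{S_{i,j}\xi,\xi}^{r}\seq{T_{i,j}\xi,\xi}^{r}$.

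From here the argument is exactly the scalar manipulation already carried out in Theorem \ref{N3}, with $\seq{S_{i,j}\xi,\xi}$ in place of $\seq{|\C_i^j\A_i|^2\xi,\xi}$ and $\seq{T_{i,j}\xi,\xi}$ in place of $\seq{|(\D_i\C_i^{m-j})^*|^2\xi,\xi}$. Namely, by Lemma \ref{Holder}(i) one has $\seq{S_{i,j}\xi,\xi}^r\leq\seq{S_{i,j}^r\xi,\xi}$ (take the exponent to the inside and absorb it), then write $\seq{S_{i,j}^r\xi,\xi}\seq{T_{i,j}^r\xi,\xi}=\left(\seq{S_{i,j}^r\xi,\xi}^{p/2}\right)^{1/p}\left(\seq{T_{i,j}^r\xi,\xi}^{q/2}\right)^{1/q}$ and apply the refined Young inequality, Lemma \ref{alqad1}, with $\sigma=\seq{S_{i,j}^r\xi,\xi}^{?}$ chosen so the exponents match, producing the gain term $r_0\bra{\seq{S_{i,j}^r\xi,\xi}^{p/2}-\seq{T_{i,j}^r\xi,\xi}^{q/2}}^2$ and the main term $\frac1p\seq{S_{i,j}^{pr}\xi,\xi}+\frac1q\seq{T_{i,j}^{qr}\xi,\xi}$ after one more application of Lemma \ref{Holder}(i). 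Summing over $i$ and $j$, bounding $\sum_i\seq{(\tfrac1pS_{i,j}^{pr}+\tfrac1qT_{i,j}^{qr})\xi,\xi}\leq\norm{\sum_i\tfrac1pS_{i,j}^{pr}+\tfrac1qT_{i,j}^{qr}}$, and bounding the (nonnegative) gain term below by its infimum over unit vectors $\rho(\xi)$, then taking the supremum over $\xi$, gives \eqref{C1}.

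The main obstacle I anticipate is purely bookkeeping: one must be careful that the exponent $2r$ is distributed correctly so that after applying Lemma \ref{Holder}(i) the powers $S_{i,j}^r$, $T_{i,j}^r$ that appear in $\rho$ are exactly the ones fed into Young's inequality, and that a \emph{second} use of Lemma \ref{Holder}(i) is needed to turn $\seq{S_{i,j}^r\xi,\xi}^p$ into $\seq{S_{i,j}^{pr}\xi,\xi}$ (legitimate since $p>1$ and $S_{i,j}^r\geq 0$); the subtraction of the gain term survives these monotonicity steps because it is applied before the final estimate of the infimum. No genuinely new idea beyond Lemma \ref{M-Cauchy} is required — it is the natural common generalization of Theorem \ref{N3} and inequality \eqref{watfa2}.
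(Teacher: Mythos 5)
Your proposal is correct and follows essentially the same route as the paper's own proof: the same $\frac1m\sum_j$ decomposition of $\A_i^m$, Lemma \ref{Logain1} with exponent $2r$, Lemma \ref{M-Cauchy} applied to $\A_i^j$ (the paper applies it to $\A_i^{j*}$, which is the same thing by the symmetry of $\psi,\phi$), two uses of the H\"older--McCarthy inequality, and the refined Young inequality to produce the gain term. The only blemish is the intermediate identity $\seq{S_{i,j}^r\xi,\xi}\seq{T_{i,j}^r\xi,\xi}=\bra{\seq{S_{i,j}^r\xi,\xi}^{p/2}}^{1/p}\bra{\seq{T_{i,j}^r\xi,\xi}^{q/2}}^{1/q}$, which is false as written; you should simply apply Lemma \ref{alqad1} with $\sigma=\seq{S_{i,j}^r\xi,\xi}$ and $\tau=\seq{T_{i,j}^r\xi,\xi}$ directly, which yields exactly the main and gain terms you state.
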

\begin{proof}
  Let $\xi\in\h$ be any unit vector. Then by Lemma \ref{alqad2}, Lemma
\ref{Logain1} and Lemma \ref{M-Cauchy},  we obtain
\begin{eqnarray*}
  \abs{\sum_{i=1}^{n}\seq{\X_i\A_i^m\B_i\xi,\xi}}^{2r} &=& \frac{1}{m}\sum_{j=1}^{m}\abs{\sum_{i=1}^{n}\seq{\X_i\A_i^{m-j}\A_i^{j}\B_i\xi,\xi}}^{2r} \\
   &\leq&\frac{1}{m}\sum_{j=1}^{m}\bra{\sum_{i=1}^{n}\abs{\seq{\X_i\A_i^{m-j}\A_i^{j}\B_i\xi,\xi}}}^{2r} \\
  &\leq&\frac{n^{2r-1}}{m}\sum_{j=1}^{m}\sum_{i=1}^{n}\abs{\seq{\X_i\A_i^{m-j}\A_i^{j}\B_i\xi,\xi}}^{2r}
   \end{eqnarray*}
  and so
 \begin{eqnarray*}
 \abs{\sum_{i=1}^{n}\seq{\X_i\A_i^m\B_i\xi,\xi}}^{2r}  &\leq&\frac{n^{2r-1}}{m}\sum_{j=1}^{m}\sum_{i=1}^{n}\abs{\seq{\A_i^{j*}\X_i^{*}x,\A_i^{m-j}\B_ix}}^{2r}\\
  &\leq&\frac{n^{2r-1}}{m}\sum_{j=1}^{m}\sum_{i=1}^{n}\norm{\psi\bra{\abs{\A_i^{j*}}}\X_i^*x}^{2r}\norm{\phi\bra{\abs{\A_i^j}}\A_i^{m-j}\B_ix}^{2r}\\
  &\leq&\frac{n^{2r-1}}{m}\sum_{j=1}^{m}\sum_{i=1}^{n}\seq{S_{i,j}\xi,\xi}^{r}\seq{T_{i,j}\xi,\xi}^{r}\\
   &\leq&\frac{n^{2r-1}}{m}\sum_{j=1}^{m}\sum_{i=1}^{n}\seq{S_{i,j}^r\xi,\xi}\seq{T_{i,j}^r\xi,\xi}
 \end{eqnarray*}
 Hence
 \begin{eqnarray*}
 \abs{\sum_{i=1}^{n}\seq{\X_i\A_i^m\B_i\xi,\xi}}^{2r}  &\leq&\frac{n^{2r-1}}{m}\sum_{j=1}^{m}\sum_{i=1}^{n}\seq{\bra{\frac{1}{p}S_{i,j}^{pr}+\frac{1}{q}T_{i,j}^{qr}}\xi,\xi}\\
  &-&r_0\frac{n^{2r-1}}{m}\sum_{j=1}^{m}\sum_{i=1}^{n}\bra{\seq{S_{i,j}^r\xi,\xi}^{\frac{p}{2}}-\seq{T_{i,j}^r\xi,\xi}^{\frac{q}{2}}}^2.
\end{eqnarray*}
Taking the supremum over all unit vectors $\xi\in\h$, we deduce the desired result.
\end{proof}
Inequality (\ref{C1}) includes several numerical radius inequalities as special cases.
Samples of inequalities are demonstrated in what follows, for $\psi(t)=t^{\lambda}$ and $\phi(t)=t^{1-\lambda}$,
$\lambda\in (0,1)$ in inequality (\ref{C1}).
\begin{corollary}\label{okasha1}
  Let $\A_i,\B_i,\X_i\in\bh$, $(i=1,\cdots,n)$, $m\in\N$, $p,q>1$ with $\frac{1}{p}+\frac{1}{q}=1$ and let $\psi$ and $g$ be as in
  Lemma \ref{M-Cauchy}. Then for all $r\geq 1$, we have
  \begin{equation}\label{C1}
    w^{2r}\bra{\sum_{i=1}^{n}\X_i\A_i^m\B_i}\leq \frac{n^{2r-1}}{m}\sum_{j=1}^{m}\norm{\sum_{i=1}^{n}\frac{1}{p}S_{i,j}^{pr}+\frac{1}{q}T_{i,j}^{qr}}-r_0\inf_{\norm{x}=1}\rho(\xi),
  \end{equation}
  where $r_0=\min\{\frac{1}{p},\frac{1}{q}\}$, $S_{i,j}=\X_i\abs{\A_i^{j*}}^{2\lambda}\X_i^*$,
  $T_{i,j}=\bra{\A_i^{m-j}\B_i}^*\abs{\A_i^j}^{2(1-\lambda)}\A_i^{m-j}\B_i$ and
  $$\rho(\xi)=\frac{n^{2r-1}}{m}\sum_{j=1}^{m}\sum_{i=1}^{n}\bra{\seq{S_{i,j}^r\xi,\xi}^{\frac{p}{2}}-\seq{T_{i,j}^r\xi,\xi}^{\frac{q}{2}}}^2.$$
\end{corollary}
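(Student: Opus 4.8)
The plan is to obtain Corollary~\ref{okasha1} as a direct specialization of Theorem~\ref{sch1}; no new machinery is needed. First I would verify that the particular choice $\psi(t)=t^{\lambda}$ and $\phi(t)=t^{1-\lambda}$ with $\lambda\in(0,1)$ meets the hypotheses of Lemma~\ref{M-Cauchy}: both are non-negative on $[0,\infty)$, both are continuous there (continuity at $t=0$ is where $\lambda\in(0,1)$ is used, since it guarantees the exponents $\lambda$ and $1-\lambda$ are positive), and they satisfy the factorization $\psi(t)\phi(t)=t^{\lambda}\,t^{1-\lambda}=t$ for every $t\in[0,\infty)$. Hence $\psi$ and $\phi$ are admissible functions in the sense required by Theorem~\ref{sch1}.

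Next I would simply compute the operators $S_{i,j}$ and $T_{i,j}$ for this choice. Using the functional calculus for the positive operators $\abs{\A_i^{j*}}$ and $\abs{\A_i^{j}}$, we get $\psi^{2}\bra{\abs{\A_i^{j*}}}=\abs{\A_i^{j*}}^{2\lambda}$ and $\phi^{2}\bra{\abs{\A_i^{j}}}=\abs{\A_i^{j}}^{2(1-\lambda)}$, so that
\[
S_{i,j}=\X_i\abs{\A_i^{j*}}^{2\lambda}\X_i^{*},\qquad
T_{i,j}=\bra{\A_i^{m-j}\B_i}^{*}\abs{\A_i^{j}}^{2(1-\lambda)}\A_i^{m-j}\B_i,
\]
which are exactly the operators appearing in the statement of the corollary. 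Substituting these into the conclusion~\eqref{C1} of Theorem~\ref{sch1}, and keeping $r_0=\min\{\tfrac1p,\tfrac1q\}$ and the function $\rho$ unchanged (it depends on $S_{i,j},T_{i,j}$ only through the same expressions), yields precisely the asserted inequality.

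Since this is a pure specialization, there is no genuine obstacle: the only point requiring a moment's care is the verification of continuity of $\psi$ and $\phi$ at the origin, i.e.\ the need for $\lambda\in(0,1)$ rather than $\lambda\in[0,1]$, so that Lemma~\ref{M-Cauchy} (and hence Theorem~\ref{sch1}) genuinely applies. Everything else is a direct quotation of Theorem~\ref{sch1} with the stated substitution.
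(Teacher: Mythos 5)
Your proof is correct and follows exactly the route the paper takes: the paper presents Corollary~\ref{okasha1} as the direct specialization of Theorem~\ref{sch1} obtained by taking $\psi(t)=t^{\lambda}$ and $\phi(t)=t^{1-\lambda}$ with $\lambda\in(0,1)$, so that $\psi^{2}\bra{\abs{\A_i^{j*}}}=\abs{\A_i^{j*}}^{2\lambda}$ and $\phi^{2}\bra{\abs{\A_i^{j}}}=\abs{\A_i^{j}}^{2(1-\lambda)}$. Your additional remark on why $\lambda\in(0,1)$ is needed for the admissibility of these functions in Lemma~\ref{M-Cauchy} is a welcome detail that the paper leaves implicit.
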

For $\X_i=\B_i=I$ in inequality (\ref{C1}) we get the following numerical radius inequality
\begin{corollary}\label{okasha2}
  Let $\A_i,\in\bh$, $(i=1,\cdots,n)$, $m\in\N$, $p,q>1$ with $\frac{1}{p}+\frac{1}{q}=1$ and let $\psi$ and $g$ be as in
  Lemma \ref{M-Cauchy}. Then for all $r\geq 1$, we have
  \begin{equation}\label{C1}
    w^{2r}\bra{\sum_{i=1}^{n}\A_i^m}\leq \frac{n^{2r-1}}{m}\sum_{j=1}^{m}\norm{\sum_{i=1}^{n}\frac{1}{p}S_{i,j}^{pr}+\frac{1}{q}T_{i,j}^{qr}}-r_0\inf_{\norm{x}=1}\rho(\xi),
  \end{equation}
  where $r_0=\min\{\frac{1}{p},\frac{1}{q}\}$, $S_{i,j}=\psi^2\bra{\abs{\bra{\A_i^{j}}^*}}$,
  $T_{i,j}=\bra{\A_i^{m-j}}^{*}\phi^2\bra{\abs{\A_i^j}}\A_i^{m-j}$ and
  $$\rho(\xi)=\frac{n^{2r-1}}{m}\sum_{j=1}^{m}\sum_{i=1}^{n}\bra{\seq{S_{i,j}^r\xi,\xi}^{\frac{p}{2}}-\seq{T_{i,j}^r\xi,\xi}^{\frac{q}{2}}}^2.$$
\end{corollary}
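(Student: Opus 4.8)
The plan is to read off this corollary as a direct specialization of Theorem~\ref{sch1}. In the hypotheses of Theorem~\ref{sch1} I would put $\X_i=\B_i=I$ for every $i=1,\dots,n$; this substitution is legitimate since $I\in\bh$ and it leaves the functions $\psi,\phi$ untouched. With this choice one has $\sum_{i=1}^{n}\X_i\A_i^m\B_i=\sum_{i=1}^{n}\A_i^m$, so the left-hand side of the inequality in Theorem~\ref{sch1} becomes precisely $w^{2r}\bra{\sum_{i=1}^{n}\A_i^m}$.

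Next I would check that the auxiliary operators degenerate to the forms stated in the corollary. From $S_{i,j}=\X_i\psi^2\bra{\abs{\A_i^{j*}}}\X_i^*$ and $\X_i=I$ we get $S_{i,j}=\psi^2\bra{\abs{\bra{\A_i^{j}}^*}}$, and from $T_{i,j}=\bra{\A_i^{m-j}\B_i}^*\phi^2\bra{\abs{\A_i^j}}\A_i^{m-j}\B_i$ and $\B_i=I$ we get $T_{i,j}=\bra{\A_i^{m-j}}^{*}\phi^2\bra{\abs{\A_i^j}}\A_i^{m-j}$; these are exactly the operators appearing in the corollary. Since $\rho(\xi)$ is assembled only from the numbers $\seq{S_{i,j}^r\xi,\xi}$ and $\seq{T_{i,j}^r\xi,\xi}$, it too coincides with the function $\rho$ in the corollary, and the constants $r_0=\min\set{\frac1p,\frac1q}$ and $\frac{n^{2r-1}}{m}$ are unaffected. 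Substituting all of this into the conclusion of Theorem~\ref{sch1} yields the asserted inequality.

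The argument therefore carries no genuine obstacle: all the analytic content (the mixed Schwarz inequality of Lemma~\ref{M-Cauchy}, the convexity estimate of Lemma~\ref{Logain1}, the H\"older--McCarthy inequality of Lemma~\ref{Holder}, and the refined Young inequality of Lemma~\ref{alqad1}) has already been spent in the proof of Theorem~\ref{sch1}, and what remains is purely bookkeeping. If one prefers a self-contained proof, one simply reruns the proof of Theorem~\ref{sch1} verbatim with $\X_i=\B_i=I$ inserted from the outset: write $\A_i^m=\A_i^{m-j}\A_i^j$, average over $j=1,\dots,m$, apply Lemma~\ref{Logain1}, then Lemma~\ref{M-Cauchy} to the operator $\bra{\A_i^{j}}^*$ against the vectors $\xi$ and $\A_i^{m-j}\xi$, then Lemma~\ref{Holder} and Lemma~\ref{alqad1}, and finally take the supremum over unit vectors $\xi$. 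The only point that deserves a line of care along that route is identifying $\norm{\psi\bra{\abs{\bra{\A_i^{j}}^*}}\xi}^2=\seq{S_{i,j}\xi,\xi}$ and $\norm{\phi\bra{\abs{\A_i^j}}\A_i^{m-j}\xi}^2=\seq{T_{i,j}\xi,\xi}$, so that the squared difference in $\rho$ is reproduced exactly.
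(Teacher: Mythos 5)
Your proof is correct and is exactly the route the paper takes: the corollary is obtained by setting $\X_i=\B_i=I$ in Theorem~\ref{sch1}, with the operators $S_{i,j}$ and $T_{i,j}$ and the function $\rho$ specializing just as you describe. No further argument is needed.
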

An application of Corollary \ref{okasha2} can be seen in the following result. It involves a
numerical radius inequality for the powers of operator.
\begin{corollary}\label{okasha3}
  Let $A,\in\bh$,  $m\in\N$, $p,q>1$ with $\frac{1}{p}+\frac{1}{q}=1$ and let
  $\psi(t)=t^{\lambda}$ and $\phi(t)=t^{1-\lambda}$. Then for all $r\geq 1$, we have
  \begin{equation}\label{C1}
    w^{2r}\bra{A^m}\leq \frac{1}{m}\sum_{j=1}^{m}\norm{\frac{1}{p}S_{j}^{pr}+\frac{1}{q}T_{j}^{qr}}-r_0\inf_{\norm{x}=1}\rho(\xi),
  \end{equation}
  where $r_0=\min\{\frac{1}{p},\frac{1}{q}\}$, $S_{j}=\abs{\bra{A^{j}}^*}^{2\lambda}$,
  $T_{j}=\bra{A^{m-j}}^{*}\abs{A^j}^{2(1-\lambda)}A^{m-j}$ and
  $$\rho(\xi)=\frac{1}{m}\sum_{j=1}^{m}\bra{\seq{S_{j}^r\xi,\xi}^{\frac{p}{2}}-\seq{T_{j}^r\xi,\xi}^{\frac{q}{2}}}^2.$$
\end{corollary}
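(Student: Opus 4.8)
The plan is to read the statement off as the one-operator instance of the machinery already developed: it is exactly Corollary~\ref{okasha2} (equivalently Theorem~\ref{sch1} with $\X_i=\B_i=I$) specialized to $n=1$ and $\A_1=A$, together with the power-function choice $\psi(t)=t^{\lambda}$, $\phi(t)=t^{1-\lambda}$ that already appears in Corollary~\ref{okasha1}. Thus the proof consists of substituting these choices and simplifying the resulting constants and the correction term $\rho$.

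First I would check that $(\psi,\phi)=(t^{\lambda},t^{1-\lambda})$ with $\lambda\in(0,1)$ is admissible in Lemma~\ref{M-Cauchy}: both functions are non-negative and continuous on $[0,\infty)$ and $\psi(t)\phi(t)=t^{\lambda}t^{1-\lambda}=t$ for every $t\ge 0$. Consequently $\psi^{2}(t)=t^{2\lambda}$ and $\phi^{2}(t)=t^{2(1-\lambda)}$, so with $\X_i=\B_i=I$ and $n=1$ the operators occurring in Theorem~\ref{sch1} reduce to
\begin{align*}
S_{1,j}&=\psi^{2}\bigl(\abs{(A^{j})^{*}}\bigr)=\abs{(A^{j})^{*}}^{2\lambda}=S_{j},\\
T_{1,j}&=(A^{m-j})^{*}\phi^{2}\bigl(\abs{A^{j}}\bigr)A^{m-j}=(A^{m-j})^{*}\abs{A^{j}}^{2(1-\lambda)}A^{m-j}=T_{j}.
\end{align*}
These are positive operators, so $S_{j}^{r}$ and $T_{j}^{r}$ are well defined and the Hölder--McCarthy inequality (Lemma~\ref{Holder}) applies to them. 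Because $n=1$ gives $n^{2r-1}=1$, the prefactor $\tfrac{n^{2r-1}}{m}$ becomes $\tfrac1m$, the inner sums $\sum_{i=1}^{n}$ disappear, $\sum_{i=1}^{n}\X_i\A_i^{m}\B_i$ becomes $A^{m}$, and $\rho$ collapses to $\rho(\xi)=\tfrac1m\sum_{j=1}^{m}\bigl(\seq{S_{j}^{r}\xi,\xi}^{p/2}-\seq{T_{j}^{r}\xi,\xi}^{q/2}\bigr)^{2}$. Feeding all of this into the inequality of Theorem~\ref{sch1} yields the assertion.

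For a self-contained argument one can instead rerun the proof of Theorem~\ref{sch1} in the case $n=1$: using that powers of $A$ commute, write $\abs{\seq{A^{m}\xi,\xi}}^{2r}=\tfrac1m\sum_{j=1}^{m}\abs{\seq{(A^{j})^{*}\xi,\,A^{m-j}\xi}}^{2r}$; bound each $j$-summand by the mixed Schwarz inequality (Lemma~\ref{M-Cauchy}) applied to the operator $(A^{j})^{*}$, obtaining $\seq{S_{j}\xi,\xi}^{r}\seq{T_{j}\xi,\xi}^{r}$; pass to $\seq{S_{j}^{r}\xi,\xi}\seq{T_{j}^{r}\xi,\xi}$ via Hölder--McCarthy; apply the refined Young inequality (Lemma~\ref{alqad1}) with $\sigma=\seq{S_{j}^{r}\xi,\xi}$ and $\tau=\seq{T_{j}^{r}\xi,\xi}$, then Hölder--McCarthy once more to replace $\sigma^{p}$ by $\seq{S_{j}^{pr}\xi,\xi}$ and $\tau^{q}$ by $\seq{T_{j}^{qr}\xi,\xi}$; finally take the supremum over unit vectors, using $\sup(f-g)\le\sup f-\inf g$ together with $\norm{T}=\sup_{\norm{\xi}=1}\seq{T\xi,\xi}$ for positive $T$. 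There is no genuine obstacle here: the only steps that need a word of justification are the admissibility of $(\psi,\phi)$ in Lemma~\ref{M-Cauchy} and the positivity of $S_{j}$ and $T_{j}$, and the only real work is bookkeeping the constants and the correction term through the specialization $n=1$, $\X_i=\B_i=I$, $\psi(t)=t^{\lambda}$, $\phi(t)=t^{1-\lambda}$.
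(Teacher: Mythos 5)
Your proposal is correct and matches the paper's intent exactly: the paper offers no written proof, simply presenting this corollary as the $n=1$, $\X_i=\B_i=I$, $\psi(t)=t^{\lambda}$, $\phi(t)=t^{1-\lambda}$ specialization of Corollary \ref{okasha2} (i.e.\ of Theorem \ref{sch1}), which is precisely the substitution and bookkeeping you carry out.
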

\begin{theorem}\label{sch2}
  Let $\A_i,\B_i,\X_i\in\bh$, $(i=1,\cdots,n)$, $m,k\in\N$, $p,q>1$ with $\frac{1}{p}+\frac{1}{q}=1$ and let $\psi$ and $\phi$ be as in
  Lemma \ref{M-Cauchy}. Then for all $r\geq 1$, we have
  \begin{equation}\label{C2}
    w^{2k}\bra{\sum_{i=1}^{n}\X_i\A_i^m\B_i}\leq \frac{n^{2k-1}}{m}\sum_{j=1}^{m}\sum_{i=1}^{n}\norm{\frac{1}{p}S_{i,j}^{pr}+\frac{1}{q}T_{i,j}^{qr}}^{\frac{k}{r}}-r_0^k\inf_{\norm{x}=1}\omega(\xi),
  \end{equation}
  where $r_0=\min\{\frac{1}{p},\frac{1}{q}\}$, $S_{i,j}=\X_i\psi^2\bra{\abs{\A_i^{j*}}}\X_i^*$, $T_{i,j}=\bra{\A_i^{m-j}\B_i}^*\phi^2\bra{\abs{\A_i^j}}\A_i^{m-j}\B_i$ and
  $$\omega(\xi)=\frac{n^{2k-1}}{m}\sum_{j=1}^{m}\sum_{i=1}^{n}\bra{\seq{S_{i,j}^{p}\xi,\xi}^{\frac{k}{2}}-\seq{T_{i,j}^{q}\xi,\xi}^{\frac{k}{2}}}^2.$$
\end{theorem}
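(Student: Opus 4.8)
The plan is to run essentially the argument of Theorem~\ref{sch1}, with the scalar inequality of Lemma~\ref{alqad1} replaced by the Manasrah--Kittaneh refinement, Lemma~\ref{alqad2} (inequality~(\ref{Eq.3})), applied with the integer parameter in that lemma taken equal to $k$. Fix a unit vector $\xi\in\h$. Since $\A_i^{m-j}\A_i^j=\A_i^m$ for each $j=1,\cdots,m$, the sum $\sum_{i=1}^n\seq{\X_i\A_i^{m-j}\A_i^j\B_i\xi,\xi}$ is independent of $j$, so $\abs{\sum_{i=1}^n\seq{\X_i\A_i^m\B_i\xi,\xi}}^{2k}=\frac1m\sum_{j=1}^m\abs{\sum_{i=1}^n\seq{\X_i\A_i^{m-j}\A_i^j\B_i\xi,\xi}}^{2k}$. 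The triangle inequality applied inside the modulus, followed by Lemma~\ref{Logain1} with exponent $2k$ applied to the $i$-sum, bounds this above by $\frac{n^{2k-1}}{m}\sum_{j=1}^m\sum_{i=1}^n\abs{\seq{\X_i\A_i^{m-j}\A_i^j\B_i\xi,\xi}}^{2k}$.

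The next step treats a single $(i,j)$-term. Rewriting $\seq{\X_i\A_i^m\B_i\xi,\xi}=\seq{(\A_i^j)^*\X_i^*\xi,\A_i^{m-j}\B_i\xi}$ and applying the extended mixed Schwarz inequality (Lemma~\ref{M-Cauchy}) to the operator $(\A_i^j)^*$ — whose modulus is $\abs{\A_i^{j*}}$ and whose adjoint has modulus $\abs{\A_i^j}$ — gives $\abs{\seq{\X_i\A_i^m\B_i\xi,\xi}}\le\norm{\psi\bra{\abs{\A_i^{j*}}}\X_i^*\xi}\,\norm{\phi\bra{\abs{\A_i^j}}\A_i^{m-j}\B_i\xi}$, hence $\abs{\seq{\X_i\A_i^m\B_i\xi,\xi}}^{2k}\le\seq{S_{i,j}\xi,\xi}^k\seq{T_{i,j}\xi,\xi}^k$ by the definitions of $S_{i,j}$ and $T_{i,j}$. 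Using Lemma~\ref{Holder}(i) with exponents $p,q\ge1$ one has $\seq{S_{i,j}\xi,\xi}\le\seq{S_{i,j}^p\xi,\xi}^{1/p}$ and $\seq{T_{i,j}\xi,\xi}\le\seq{T_{i,j}^q\xi,\xi}^{1/q}$, so with $\sigma:=\seq{S_{i,j}^p\xi,\xi}$ and $\tau:=\seq{T_{i,j}^q\xi,\xi}$ one obtains $\seq{S_{i,j}\xi,\xi}^k\seq{T_{i,j}\xi,\xi}^k\le(\sigma^{1/p}\tau^{1/q})^k$.

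Now comes the heart of the estimate: Lemma~\ref{alqad2}, applied to $\sigma,\tau$ with parameter $k$, yields $(\sigma^{1/p}\tau^{1/q})^k\le\bra{\frac1p\sigma^r+\frac1q\tau^r}^{k/r}-r_0^k(\sigma^{k/2}-\tau^{k/2})^2$ for $r\ge1$. A further application of Lemma~\ref{Holder}(i) (with exponent $r\ge1$, to the positive operators $S_{i,j}^p$ and $T_{i,j}^q$) gives $\frac1p\sigma^r+\frac1q\tau^r\le\seq{\bra{\frac1p S_{i,j}^{pr}+\frac1q T_{i,j}^{qr}}\xi,\xi}\le\norm{\frac1p S_{i,j}^{pr}+\frac1q T_{i,j}^{qr}}$, and since $t\mapsto t^{k/r}$ is increasing on $[0,\infty)$, raising to the power $k/r$ preserves the inequality. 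Combining, $\abs{\seq{\X_i\A_i^{m-j}\A_i^j\B_i\xi,\xi}}^{2k}\le\norm{\frac1p S_{i,j}^{pr}+\frac1q T_{i,j}^{qr}}^{k/r}-r_0^k\bra{\seq{S_{i,j}^p\xi,\xi}^{k/2}-\seq{T_{i,j}^q\xi,\xi}^{k/2}}^2$; summing this against $\frac{n^{2k-1}}{m}$ over $i$ and $j$, the subtracted part is exactly $r_0^k\omega(\xi)$, so $\abs{\sum_{i=1}^n\seq{\X_i\A_i^m\B_i\xi,\xi}}^{2k}\le\frac{n^{2k-1}}{m}\sum_{j=1}^m\sum_{i=1}^n\norm{\frac1p S_{i,j}^{pr}+\frac1q T_{i,j}^{qr}}^{k/r}-r_0^k\omega(\xi)$. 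Replacing $\omega(\xi)$ by $\inf_{\norm{x}=1}\omega(x)$ and taking the supremum over all unit vectors $\xi\in\h$ yields the claimed bound on $w^{2k}\bra{\sum_{i=1}^n\X_i\A_i^m\B_i}$.

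The calculations are routine; the point that needs care is the bookkeeping in the two applications of the mixed Schwarz inequality and of Hölder--McCarthy. One must apply Lemma~\ref{M-Cauchy} to $(\A_i^j)^*$ rather than to $\A_i^j$, so that the resulting quantities are precisely $\seq{S_{i,j}\xi,\xi}$ and $\seq{T_{i,j}\xi,\xi}$ and not their $\psi/\phi$-interchanged versions, and one must track which exponents must be $\ge1$ for Lemma~\ref{Holder}(i) versus merely positive for the final monotonicity step $t\mapsto t^{k/r}$. I do not anticipate any essential difficulty beyond this.
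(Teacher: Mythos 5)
Your proposal is correct and follows essentially the same route as the paper's own proof: the same $\frac1m\sum_{j=1}^m$ splitting of $\A_i^m$, the triangle inequality plus Lemma \ref{Logain1} with exponent $2k$, the mixed Schwarz inequality (Lemma \ref{M-Cauchy}) applied to $(\A_i^j)^*$ to produce $\seq{S_{i,j}\xi,\xi}^k\seq{T_{i,j}\xi,\xi}^k$, two applications of H\"older--McCarthy, and Lemma \ref{alqad2} with its integer parameter set to $k$. Your explicit care about which operator the mixed Schwarz inequality is applied to, and about passing from $-\omega(\xi)$ to $-\inf_{\norm{x}=1}\omega(x)$ before taking the supremum, only makes explicit steps the paper leaves implicit.
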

\begin{proof}
  Let $\xi\in\h$ be any unit vector. Then by Lemma \ref{alqad2}, Lemma
\ref{Logain1} and Lemma \ref{M-Cauchy},  we obtain
\begin{eqnarray*}
  \abs{\sum_{i=1}^{n}\seq{\X_i\A_i^m\B_i\xi,\xi}}^{2k} &=& \frac{1}{m}\sum_{j=1}^{m}\abs{\sum_{i=1}^{n}\seq{\X_i\A_i^{m-j}\A_i^{j}\B_i\xi,\xi}}^{2k} \\
   &\leq&\frac{1}{m}\sum_{j=1}^{m}\bra{\sum_{i=1}^{n}\abs{\seq{\X_i\A_i^{m-j}\A_i^{j}\B_i\xi,\xi}}}^{2k} \\
  &\leq&\frac{n^{2k-1}}{m}\sum_{j=1}^{m}\sum_{i=1}^{n}\abs{\seq{\X_i\A_i^{m-j}\A_i^{j}\B_i\xi,\xi}}^{2k} \\
  &\leq&\frac{n^{2k-1}}{m}\sum_{j=1}^{m}\sum_{i=1}^{n}\abs{\seq{\A_i^{j*}\X_i^{*}x,\A_i^{m-j}\B_ix}}^{2k}
   \end{eqnarray*}
   this implies that
 \begin{eqnarray*}
  \abs{\sum_{i=1}^{n}\seq{\X_i\A_i^m\B_i\xi,\xi}}^{2k}&\leq&\frac{n^{2k-1}}{m}\sum_{j=1}^{m}\sum_{i=1}^{n}\norm{\psi\bra{\abs{\A_i^{j*}}}\X_i^*x}^{2k}\norm{\phi\bra{\abs{\A_i^j}}\A_i^{m-j}\B_ix}^{2k}\\
  &\leq&\frac{n^{2k-1}}{m}\sum_{j=1}^{m}\sum_{i=1}^{n}\bra{\seq{S_{i,j}^{p}\xi,\xi}^{\frac{1}{p}}\seq{T_{i,j}^{q}\xi,\xi}^{\frac{1}{q}}}^{k}\\
   &\leq&\frac{n^{2k-1}}{m}\sum_{j=1}^{m}\sum_{i=1}^{n}\bra{\seq{\frac{1}{p}S_{i,j}^{pr}\xi,\xi}+\frac{1}{q}\seq{T_{i,j}^{qr}\xi,\xi}}^{\frac{k}{r}}\\
   &-&r_0^k\frac{n^{2k-1}}{m}\sum_{j=1}^{m}\sum_{i=1}^{n}\bra{\seq{S_{i,j}^{p}\xi,\xi}^{\frac{k}{2}}-\seq{T_{i,j}^{q}\xi,\xi}^{\frac{k}{2}}}^2
 \end{eqnarray*}
 Hence
 \begin{eqnarray*}
 \abs{\sum_{i=1}^{n}\seq{\X_i\A_i^m\B_i\xi,\xi}}^{2k}  &\leq&\frac{n^{2r-1}}{m}\sum_{j=1}^{m}\sum_{i=1}^{n}\seq{\bra{\frac{1}{p}S_{i,j}^{pr}+\frac{1}{q}T_{i,j}^{qr}}\xi,\xi}^{\frac{k}{r}}\\
  &-&r_0^k\frac{n^{2k-1}}{m}\sum_{j=1}^{m}\sum_{i=1}^{n}\bra{\seq{S_{i,j}^{p}\xi,\xi}^{\frac{k}{2}}-\seq{T_{i,j}^{q}\xi,\xi}^{\frac{k}{2}}}^2.
\end{eqnarray*}
Taking the supremum over all unit vectors $\xi\in\h$, we deduce the desired result.
\end{proof}
If we take $k=1$ and $p=q$, we have
\begin{corollary}
   Let $\A_i,\B_i,\X_i\in\bh$, $(i=1,\cdots,n)$, $m\in\N$, and let $\psi$ and $\phi$ be as in
  Lemma \ref{M-Cauchy}. Then for all $r\geq 1$, we have
  \begin{equation}\label{C2}
    w^{2}\bra{\sum_{i=1}^{n}\X_i\A_i^m\B_i}\leq \frac{n}{m 2^{\frac{1}{r}}}\sum_{j=1}^{m}\sum_{i=1}^{n}\norm{S_{i,j}^{2r}+T_{i,j}^{2r}}^{\frac{1}{r}}-\frac{1}{2}\inf_{\norm{x}=1}\omega(\xi),
  \end{equation}
  where  $S_{i,j}=\X_i\psi^2\bra{\abs{\A_i^{j*}}}\X_i^*$, $T_{i,j}=\bra{\A_i^{m-j}\B_i}^*\phi^2\bra{\abs{\A_i^j}}\A_i^{m-j}\B_i$ and
  $$\omega(\xi)=\frac{n}{m}\sum_{j=1}^{m}\sum_{i=1}^{n}\bra{\seq{S_{i,j}^{2}\xi,\xi}^{\frac{1}{2}}-\seq{T_{i,j}^{2}\xi,\xi}^{\frac{1}{2}}}^2.$$
\end{corollary}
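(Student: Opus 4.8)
The plan is to derive this corollary as nothing more than the specialization of Theorem~\ref{sch2} to $k=1$ and $p=q$, so the proof is pure bookkeeping together with one elementary homogeneity observation. First I would note that the standing constraint $\tfrac1p+\tfrac1q=1$ combined with $p=q$ forces $p=q=2$; hence $r_0=\min\{\tfrac1p,\tfrac1q\}=\tfrac12$, and since $k=1$ we obtain $r_0^{k}=\tfrac12$ and $n^{2k-1}=n^{1}=n$. The free parameter $r\geq 1$ of Theorem~\ref{sch2} is untouched by this choice, so the conclusion inherits the range "for all $r\geq 1$", and the operators $S_{i,j}$, $T_{i,j}$ are verbatim those of Theorem~\ref{sch2}, so no re-identification is needed.

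Second I would simplify the main term. Under $p=q=2$ and $k=1$ the summand $\norm{\tfrac1p S_{i,j}^{pr}+\tfrac1q T_{i,j}^{qr}}^{k/r}$ becomes $\norm{\tfrac12\bra{S_{i,j}^{2r}+T_{i,j}^{2r}}}^{1/r}$, and by positive homogeneity of the operator norm (i.e.\ $\norm{c\B}^{1/r}=c^{1/r}\norm{\B}^{1/r}$ for $c>0$ and $\B\in\bh$) this equals $2^{-1/r}\norm{S_{i,j}^{2r}+T_{i,j}^{2r}}^{1/r}$. Pulling the constant out of the double sum converts $\tfrac{n^{2k-1}}{m}\sum_{j=1}^{m}\sum_{i=1}^{n}$ into $\tfrac{n}{m\,2^{1/r}}\sum_{j=1}^{m}\sum_{i=1}^{n}$, which is exactly the first term on the right-hand side of the claimed inequality.

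Finally I would match the correction term: the function $\omega$ of Theorem~\ref{sch2}, namely $\tfrac{n^{2k-1}}{m}\sum_{j}\sum_{i}\bra{\seq{S_{i,j}^{p}\xi,\xi}^{k/2}-\seq{T_{i,j}^{q}\xi,\xi}^{k/2}}^{2}$, collapses under $k=1$, $p=q=2$, $n^{2k-1}=n$ to $\tfrac{n}{m}\sum_{j}\sum_{i}\bra{\seq{S_{i,j}^{2}\xi,\xi}^{1/2}-\seq{T_{i,j}^{2}\xi,\xi}^{1/2}}^{2}$, which is precisely the $\omega$ in the statement, while $r_0^{k}\inf_{\norm{x}=1}\omega(\xi)=\tfrac12\inf_{\norm{x}=1}\omega(\xi)$. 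Assembling the three pieces yields the asserted inequality. I do not expect any genuine obstacle: the only step deserving even a sentence of justification is the homogeneity identity used to pull $2^{-1/r}$ outside the norm, and the only real risk is a transcription slip in the exponents or indices inherited from Theorem~\ref{sch2}.
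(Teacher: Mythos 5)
Your proposal is correct and coincides with the paper's own treatment: the corollary is obtained exactly by setting $k=1$ and $p=q=2$ in Theorem~\ref{sch2}, with the factor $2^{-1/r}$ extracted by homogeneity of the norm and $r_0^{k}=\tfrac12$. The paper offers no further argument, so your bookkeeping is the whole proof.
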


\bibliographystyle{unsrtnat}
\bibliography{references}  






\end{document}